\ifpdf  \usepackage[pdftex,bookmarks=false]{hyperref}
\else   \usepackage[dvips,bookmarks=false]{hyperref}
\definecolor{darkgreen}{rgb}{0,0.45,0}
\numberwithin{equation}{section}
\Crefname{lem}{Lemma}{Lemmas}
\Crefname{prop}{Proposition}{Propositions}
\theoremstyle{definition}
\newtheorem{defn}{Definition}[section]
\theoremstyle{plain}
\newtheorem{lem}[defn]{Lemma}
\newtheorem{prop}[defn]{Proposition}
\newtheorem{cor}[defn]{Corollary}
\newtheorem{thm}[defn]{Theorem}
\newtheorem{thmintro}{Theorem}
\theoremstyle{remark}
\newtheorem{rmk}[defn]{Remark}
\newtheorem{eg}[defn]{Example}
\newcommand{\define}[1]{\textbf{\boldmath{#1}}}
\newcommand{\dfn}[1]{\textbf{\boldmath{#1}}}
\newcommand{\defeq}{\vcentcolon\equiv}  
\newcommand{\defpath}{:=} 
\gdef\notesoff{\gdef\note##1{\null}}
\newcommand{\sbt}{{\,\begin{picture}(-1,1)(-1,-3)\circle*{3}\end{picture}\ } }
\newcommand{\Grp}{\mathsf{Grp}}
\newcommand{\Ab}{\mathsf{Ab}}
\newcommand{\loopspacesym}{\mathsf{\Omega}}
\newcommand{\suspsym}{\mathsf{\Sigma}}
\newcommand{\susp}[1]{\suspsym #1}
\newcommand{\pto}{\to_{\sbt}}
\newcommand{\Gto}{\to_{\Grp}}
\newcommand{\Mgm}{\mathsf{Mgm}}
\newcommand{\pMgm}{{\Mgm_\sbt}}
\newcommand{\Mto}{\to_\Mgm}
\newcommand{\pMto}{\to_\pMgm}
\newcommand{\Msimeq}{\simeq_\Mgm}
\newcommand{\pMsimeq}{\simeq_\pMgm}
\newcommand{\psimeq}{\simeq_\sbt}
\newcommand{\psim}{\sim_\sbt}
\newcommand{\UU}{\mathcal{U}}
\newcommand{\refl}[1]{\ensuremath{\mathsf{refl}_{#1}}\xspace}
\newcommand{\merid}{\ensuremath{\mathsf{merid}}\xspace}
\newcommand{\transfib}[3]{\ensuremath{\mathsf{transport}^{#1}(#2,#3)\xspace}}
\DeclareMathOperator*{\colim}{\mathsf{colim}}
\newcommand{\Z}{\mathbb{Z}}
\newcommand{\N}{\mathbb{N}}
\newcommand{\stab}{\mathcal{S}}
\newcommand{\tH}{\tilde{H}}
\newcommand{\idfunc}[1]{\mathsf{id}_{#1}}
\newcommand{\ttrunc}[2]{\bigl\Vert #2\bigr\Vert_{#1}}
\newcommand{\ap}[1]{\mathsf{ap}_{#1}}
\newcommand{\true}{\mathsf{true}}
\newcommand{\sma}{\mathsf{smashing}}
\newcommand{\XY}{{X \wedge Y}}
\newcommand{\smin}{\mathsf{sm}}
\newcommand{\ptl}{\mathsf{auxl}}
\newcommand{\ptr}{\mathsf{auxr}}
\newcommand{\gluel}{\mathsf{gluel}}
\newcommand{\gluer}{\mathsf{gluer}}
\newcommand{\unitl}{\mathsf{unitl}}
\newcommand{\unitr}{\mathsf{unitr}}
\newcommand{\assoc}{\mathsf{assoc}}
\newcommand{\op}{\mathsf{op}}
\newcommand{\UUcat}{\mathsf{U}}
\newcommand{\Y}{\mathcal{Y}}
\newcommand{\sminsbt}{\smin_{\sbt}}
\newcommand{\glue}{\mathsf{glue}}
\renewcommand{\phi}{\varphi}
\newcommand{\cov}[1]{\langle #1 \rangle}
\newcommand{\lra}       {\longrightarrow}
\newcommand{\llra}[1]   {\stackrel{#1}{\lra}}  
\newcommand{\ctsym}{%
  \mathchoice{\mathbin{\raisebox{0.5ex}{$\displaystyle\centerdot$}}}%
             {\mathbin{\raisebox{0.5ex}{$\centerdot$}}}%
             {\mathbin{\raisebox{0.25ex}{$\scriptstyle\,\centerdot\,$}}}%
             {\mathbin{\raisebox{0.1ex}{$\scriptscriptstyle\,\centerdot\,$}}}
  }
\newcommand{\ct}[3][]{
  \@ifnextchar\bgroup
    {#2 \mathbin{\ctsym_{#1}} \ct[#1]{#3}}
    {#2 \mathbin{\ctsym_{#1}} #3}
  }
\def\smsym{\sum}
\newcommand{\@thesum}[1]{\smsym_{(#1)}}
\newcommand{\sm}[1]{\@ifnextchar\bgroup{\@sm{#1}\sm}{\@sm{#1}}}
\newcommand{\@sm}[1]{\mathchoice{{\textstyle\@thesum{#1}}}{\@thesum{#1}}{\@thesum{#1}}{\@thesum{#1}}}
\def\prdsym{\prod}
\newcommand{\@ifnextchar@starorbrace}[2]
  {\@ifnextchar*{#1}{\@ifnextchar\bgroup{#1}{#2}}}
\newcommand{\@theprd}[1]{\prdsym_{(#1)}}
\newcommand{\@theiprd}[1]{\prdsym_{\{#1\}}}
\newcommand{\prd}{\@ifnextchar*{\@iprd}{\@prd}}
\newcommand{\@prd}[1]
  {\@ifnextchar@starorbrace
    {\@tprd{#1}\prd}
    {\@tprd{#1}}}
\newcommand{\@tprd}[1]{%
  \mathchoice{%
    {{\textstyle\@theprd{#1}}}}{\@theprd{#1}}{\@theprd{#1}}{\@theprd{#1}}}
\newcommand{\@iprd}[2]{\@ifnextchar@starorbrace%
  {\@tiprd{#2}\prd}%
  {\@tiprd{#2}}}
\newcommand{\@tiprd}[1]{
  \ifthenelse{\true}
    {\@@tiprd{#1}}
    {\@tprd{#1}}}
\newcommand{\@@tiprd}[1]{\mathchoice{{\textstyle\@theiprd{#1}}}{\@theiprd{#1}}{\@theiprd{#1}}{\@theiprd{#1}}}
\newcommand{\eqvsym}{\simeq}    
\newcommand{\eqv}[2]{
  \@ifnextchar\bgroup
    {#1 \eqvsym \eqv{#2}}
    {#1 \eqvsym #2}
  }
\def\lam#1{{\lambda}\@lamarg#1:\@endlamarg\@ifnextchar\bgroup{.\,\lam}{.\,}}
\def\@lamarg#1:#2\@endlamarg{\if\relax\detokenize{#2}\relax #1\else\@lamvar{\@lameatcolon#2},#1\@endlamvar\fi}
\def\@lamvar#1,#2\@endlamvar{(#2\,{:}\,#1)}
\def\@lameatcolon#1:{#1}
\def\lamu#1{{\lambda}\@lamuarg#1:\@endlamuarg\@ifnextchar\bgroup{.\,\lamu}{.\,}}
\def\@lamuarg#1:#2\@endlamuarg{#1}
\title{The Hurewicz theorem in Homotopy Type Theory}
\author{J. Daniel Christensen}
\address{University of Western Ontario, London, Ontario, Canada}
\email{jdc@uwo.ca}
\author{Luis Scoccola}
\address{University of Western Ontario, London, Ontario, Canada}
\email{lscoccol@uwo.ca}
\date{June 14, 2023}
\begin{document}

\begin{abstract}
We prove the Hurewicz theorem in homotopy type theory, i.e., that
for $X$ a pointed, $(n-1)$-connected type $(n \geq 1)$ and $A$ an abelian group,
there is a natural isomorphism
$
  \pi_n(X)^{ab} \otimes A \cong \tH_n(X; A)
$
relating the abelianization of the homotopy groups with the homology.
We also compute the connectivity of a smash product of types and
express the lowest non-trivial homotopy group as a tensor product.
Along the way, we study magmas, loop spaces, connected covers and
prespectra, and we use $1$-coherent categories to express naturality
and for the Yoneda lemma.

As homotopy type theory has models in all $\infty$-toposes, our
results can be viewed as extending known results about spaces to
all other $\infty$-toposes.
\end{abstract}

\keywords{Hurewicz theorem, homotopy group, homology group, magma, loop space,
tensor product, homotopy type theory.}

\subjclass{55Qxx (Primary), 55Nxx, 03B38, 18N60 (Secondary)}
%

\maketitle

\tableofcontents

\addtocontents{toc}{\protect\setcounter{tocdepth}{1}}
\section{Introduction}\label{se:intro}

\begingroup
\renewcommand{\dfn}[1]{#1}

Homotopy type theory is a formal system which has models
in all $\infty$-toposes (\cite{KL}, \cite{LS}, \cite{S}, \cite{dB,dBB})%
\footnote{The initiality and semantics of higher inductive types still need to be fully worked out.}.
As such, it provides a convenient way to prove theorems for all $\infty$-toposes.
In addition, homotopy type theory is well-suited to being formalized
in a proof assistant (\cite{HoTTCoqRep}, \cite{doornthesis}).

Working in homotopy type theory as described in the book~\cite{hottbook},
we prove the \dfn{Hure\-wicz theorem}:

\renewcommand{\thethmintro}{H}
\begin{thmintro}[{\cref{thm:hurewicz}}]\label{thmH}
For $n \geq 1$, $X$ a pointed,
$(n-1)$-connected type, and $A$ an abelian group,
there is a natural isomorphism
\[
  \pi_n(X)^{ab} \otimes A \cong \tH_n(X; A) ,
\]
where on the left-hand-side we take the abelianization
(which only matters when $n = 1$).
In particular, when $A$ is the integers, this specializes to an
isomorphism $\pi_n(X)^{ab} \cong \tH_n(X)$.
\end{thmintro}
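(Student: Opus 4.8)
The plan is to reduce the Hurewicz isomorphism to a computation of the lowest nontrivial homotopy group of a smash product, which is exactly the ingredient the paper develops along the way. Reduced homology is computed as the homotopy of the smashed Eilenberg--MacLane prespectrum, so
\[
  \tH_n(X;A) \cong \colim_k \pi_{n+k}(X \wedge K(A,k)).
\]
The theorem then follows once we (i) identify each term of this colimit and (ii) show the colimit collapses.

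For (i), since $X$ is $(n-1)$-connected and $K(A,k)$ is $(k-1)$-connected, the connectivity estimate for smash products shows that $X \wedge K(A,k)$ is $(n+k-1)$-connected, so its lowest possibly-nontrivial homotopy group sits in degree $n+k$. The expression of the lowest homotopy group of a smash as a tensor product then gives $\pi_{n+k}(X \wedge K(A,k)) \cong \pi_n(X)^{ab} \otimes \pi_k(K(A,k))^{ab}$; as $\pi_k(K(A,k)) = A$ is already abelian, this is $\pi_n(X)^{ab} \otimes A$, independent of $k$. For (ii), the transition maps in the colimit are the stabilization maps built from the prespectrum structure maps $\Sigma(X \wedge K(A,k)) \to X \wedge K(A,k+1)$. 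Because $X \wedge K(A,k)$ is $(n+k-1)$-connected with $n+k \geq 2$, the Freudenthal theorem makes the suspension $\pi_{n+k}(X \wedge K(A,k)) \to \pi_{n+k+1}(\Sigma(X \wedge K(A,k)))$ an isomorphism, while the structure map $\Sigma K(A,k) \to K(A,k+1)$ is highly connected, so smashing it with the $(n-1)$-connected type $X$ keeps it an isomorphism on $\pi_{n+k+1}$. Hence each transition map is an isomorphism on the bottom homotopy group and the colimit is simply $\pi_n(X)^{ab} \otimes A$.

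The genuinely hard ingredient is the input to step (i): that the lowest nontrivial homotopy group of $X \wedge Y$ is $\pi_n(X)^{ab} \otimes \pi_m(Y)^{ab}$ for $X$ $(n-1)$-connected and $Y$ $(m-1)$-connected. I would first truncate, using $X \to \|X\|_n = K(\pi_n X, n)$ and $Y \to \|Y\|_m = K(\pi_m Y, m)$, and check via the smash connectivity estimates that $X \wedge Y \to \|X\|_n \wedge \|Y\|_m$ is an isomorphism on $\pi_{n+m}$, reducing everything to Eilenberg--MacLane spaces. The comparison map itself is the external (smash) product $\pi_n(X) \times \pi_m(Y) \to \pi_{n+m}(X \wedge Y)$; the delicate points are verifying that this pairing is bilinear and factors through the abelianized tensor product (this is precisely where the $n=1$ case and the magma/loop-space machinery are needed), and then proving it is an isomorphism for a product of Eilenberg--MacLane spaces, e.g.\ by reducing to $K(\Z,n) \wedge K(\Z,m)$ through generators and relations of abelian groups together with naturality.

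Finally, the isomorphism must be shown natural in $X$. I would phrase each map above --- the homology colimit, the smash comparison, the external product, and the tensor identification --- as a natural transformation of functors between the relevant $1$-coherent categories, and assemble naturality by invoking the Yoneda lemma, as the paper arranges. The main obstacle throughout is the lowest-homotopy-as-tensor computation: constructing a well-defined bilinear pairing up to the required coherence, and pinning down the Eilenberg--MacLane calculation while correctly tracking the abelianization in the $n=1$ case.
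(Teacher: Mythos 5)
Your top-level reduction is exactly the paper's. Your steps (i) and (ii) together are \cref{lem:stabilization}: the paper factors each transition map $\pi_{n+i}(X \wedge K(A,i)) \to \pi_{n+i+1}(X \wedge K(A,i+1))$ through $\pi_{n+i+1}(X \wedge \suspsym K(A,i))$, applies Freudenthal using the connectivity bound of \cref{cor:smashconnected}, and handles the second factor by \cref{cor:smashtruncated}. Note that \cref{cor:smashtruncated} is only the truncation-map special case of the ``smashing a highly connected map with a connected type preserves the bottom homotopy group'' principle you invoke; the paper never proves the general principle, and the special case suffices because the structure map of the EM prespectrum at level $i \geq 1$ \emph{is} the truncation map $|-|_{i+1}$. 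Relatedly, the paper only proves the transition maps are isomorphisms for $i \geq 1$ and lets the colimit collapse onto the term $\pi_{n+1}(X \wedge K(A,1))$, thereby sidestepping the level-$0$ structure map $\suspsym K(A,0) \to K(A,1)$, which is defined by a special suspension induction rather than by truncation and is not covered by \cref{cor:smashtruncated}; your blanket claim that ``each transition map is an isomorphism'' quietly includes this case, but your conclusion survives since eventual isomorphisms are all one needs.

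The genuine divergence — and the gap — is in your proof of the crux, the smash-is-tensor computation. The paper does \emph{not} reduce to Eilenberg--Mac Lane spaces or compute $K(\Z,n) \wedge K(\Z,m)$. Instead, \cref{thm:smashistensor} verifies the universal property of the tensor product representably: for an arbitrary abelian group $C$, homomorphisms $\pi_{n+m}(X \wedge Y) \Gto C$ are identified with pointed maps $X \wedge Y \pto K(C,n+m)$ via \cref{prop:connectedtotruncated} together with \cref{cor:smashconnected}, then with $X \pto Y \pto K(C,n+m)$ via the adjunction of \cref{lem:smashhomadjunction}, then with bilinear maps $\pi_n(X) \Gto \pi_m(Y) \Gto C$ via the equivalence of \cref{thm:smashisequiv}, and finally through the abelianizations; the pairing whose bilinearity you flag as delicate is exactly the map $\sma$, and establishing it is what the entire magma machinery is for. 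Your proposed endgame — generators and relations of abelian groups applied to $K(A,n) \wedge K(B,m)$ — would require tools the paper does not have and deliberately avoids: an independent construction of $\otimes$ (the paper obtains existence of tensor products as a \emph{corollary} of \cref{thm:smashistensor}), right-exactness and cofiber-sequence arguments for EM spaces in HoTT, and a treatment of $n = 1$, where $\ttrunc{1}{X}$ is $K(G,1)$ for a possibly nonabelian $G$, so it is not an EM space of an abelian group and the stated reduction does not directly apply. So while the outer architecture of your argument matches the paper, the one step you correctly identify as the hard part is left in a form that would fail as written and is, in any case, a substantially harder route than the paper's representable argument.
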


As mentioned above, this holds in any $\infty$-topos, and so is
more general than the well-known Hurewicz theorem in topology.
Interpreting the statement in an $\infty$-topos is somewhat subtle.
The groups that appear in the statement are internal group objects
whose underlying object is $0$-truncated (a ``set'', internally).
The quantification over $n$ means that there is a map $h : H \to \N$
in the $\infty$-topos representing a family of objects over the
natural numbers object, and that this map has a section.
In particular, since each ordinary natural number gives a global element of $\N$,
it follows that the fibre of $h$ over that element must itself have
a global element.
Continuing in this way, we deduce that for given objects $X$ and $A$
as in the statement,
the two internal group objects shown are equivalent as group objects.
For more on the interpretation of type theory,
see \cite[Section~4.2]{S15} and \cite{S} for the interpretation
in arbitrary $\infty$-topoi, and \cite{kapulkin-lumsdaine} for a more
explicit interpretation in simplicial sets.

Since we prove this theorem for an arbitrary $\infty$-topos, we must be careful
to use arguments that apply in this generality.
For example, it is not true in every $\infty$-topos that a surjective
map of sets has a section, so we cannot use the axiom of choice.
Similarly, the law of excluded middle and Whitehead's theorem can both
fail, so we must not use these results either.
Because of this, our proof is different from other known proofs.

\medskip

Before giving more details, we give some motivation for the
interest in this result, for those less familiar with traditional homotopy theory.

\subsection*{Motivation}
In topology, homotopy groups are in a certain sense the strongest invariants
of a topological space, and so their computation is an important tool
when trying to classify spaces up to homotopy.
In homotopy type theory, homotopy groups play a fundamental role in
that they capture information about iterated identity types.
Unfortunately, even in classical topology, the computation of homotopy
groups is a notoriously difficult problem.  Nevertheless, topologists
have come up with a variety of powerful tools for attacking this problem,
and one of the most basic tools is the Hurewicz theorem.
In most cases, it is much easier to compute homology groups than
homotopy groups, and so one can use the isomorphism from right to
left (with $A$ taken to be the integers) to compute certain homotopy groups.
Moreover, one can apply the theorem even when $X$ is not $(n-1)$-connected
using the following technique.
Let $X\cov{n-1}$ denote the fibre of the truncation map $X \to \ttrunc{n-1}{X}$
over the image of the basepoint.  Then $X\cov{n-1}$ is $(n-1)$-connected
and $\pi_n(X\cov{n-1}) \cong \pi_n(X)$, so $\pi_n(X)^{ab} \cong \tH_n(X\cov{n-1})$.
The Serre spectral sequence
can often be used to compute the required homology group.

%

\enlargethispage{-10pt}

\subsection*{Techniques and main results}
We first recall that for $n \geq 1$, the \dfn{$n$th homology group} $\tH_n(X; A)$
of a type $X$ with coefficients in an abelian group $A$ is defined to be
the colimit of a certain sequential diagram
\begin{equation}\label{eq:colim}
        \pi_{n+1}(X \wedge K(A,1)) \lra \pi_{n+2}(X \wedge K(A,2)) \lra \pi_{n+3}(X \wedge K(A,3)) \lra \cdots .
\end{equation}
Here $\wedge$ denotes the \dfn{smash product} and
$K(A,m)$ is the \dfn{Eilenberg--Mac Lane space} constructed in~\cite{FinsterLicata},
which is an $m$-truncated, $(m-1)$-connected, pointed type with a
canonical isomorphism $\pi_m(K(A,m)) \cong A$.

We now state one of our main results, which is used to prove the Hurewicz theorem,
and also has other consequences:

\renewcommand{\thethmintro}{S}
\begin{thmintro}[{\cref{cor:smashconnected} and \cref{thm:smashistensor}}]\label{thmS}
If $X$ is a pointed, $(n-1)$-connected type $(n \geq 1)$
and $Y$ is a pointed, $(m-1)$-connected type $(m \geq 1)$,
then $X \wedge Y$ is $(n+m-1)$-connected and $\pi_{n+m}(X \wedge Y)$ is the
tensor product of $\pi_n(X)^{ab}$ and $\pi_m(Y)^{ab}$ in a natural way.
\end{thmintro}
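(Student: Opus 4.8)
The plan is to treat the two assertions separately. For the connectivity statement (\cref{cor:smashconnected}), recall that $X \wedge Y$ is equivalent to the cofibre of the inclusion $X \vee Y \to X \times Y$, i.e.\ the pushout of $\ast \leftarrow X \vee Y \to X \times Y$. I would first invoke the wedge connectivity lemma: since $X$ is $(n-1)$-connected and $Y$ is $(m-1)$-connected, every map out of $X \times Y$ into an $(n+m-2)$-truncated family is determined by its restriction to $X \vee Y$, which is exactly the statement that $X \vee Y \to X \times Y$ is $(n+m-2)$-connected. Cobase change preserves the connectivity of a map, so the pushout map $\ast \to X \wedge Y$ is again $(n+m-2)$-connected; and a pointed map $\ast \to Z$ is $k$-connected precisely when $Z$ is $(k+1)$-connected. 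Hence $X \wedge Y$ is $(n+m-1)$-connected.

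For the tensor product statement (\cref{thm:smashistensor}) I would first construct the comparison map. Smashing representatives gives a pairing $\pi_n(X) \times \pi_m(Y) \to \pi_{n+m}(X \wedge Y)$, sending $\alpha \colon \sphere{n} \to X$ and $\beta \colon \sphere{m} \to Y$ to $\alpha \wedge \beta$ under the identification $\sphere{n} \wedge \sphere{m} \simeq \sphere{n+m}$. This pairing is biadditive because the smash product is compatible with the co-$H$ (pinch) structure on the spheres that defines the group operations, and since the target is abelian (as $n+m \geq 2$) it factors through the abelianizations and then through the tensor product, yielding a natural map $\mu \colon \pi_n(X)^{ab} \otimes \pi_m(Y)^{ab} \to \pi_{n+m}(X \wedge Y)$.

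To prove $\mu$ an isomorphism I would apply the Yoneda lemma in $\Ab$, reducing to showing that precomposition with $\mu$ is a bijection $\Hom(\pi_{n+m}(X \wedge Y), C) \to \Hom(\pi_n(X)^{ab} \otimes \pi_m(Y)^{ab}, C)$ for every abelian group $C$, naturally in $C$. Since $X \wedge Y$ is $(n+m-1)$-connected, $\Hom(\pi_{n+m}(X \wedge Y), C) \cong [X \wedge Y, K(C, n+m)]_\ast$ (for an $(r-1)$-connected pointed type, homomorphisms out of $\pi_r$ are the same as pointed maps into $K(C,r)$). The smash--hom adjunction identifies this with $[X, W]_\ast$, where $W \defpath \Mapp(Y, K(C, n+m))$. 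The homotopy groups of $W$ are the reduced cohomology of $Y$, namely $\pi_j(W) \cong \tH^{\,n+m-j}(Y; C)$; as $Y$ is $(m-1)$-connected this vanishes for $j > n$, while $\pi_n(W) \cong \tH^{\,m}(Y; C) \cong \Hom(\pi_m(Y), C) \cong \Hom(\pi_m(Y)^{ab}, C)$, the last step because $C$ is abelian---this is where the abelianization first enters. Since $X$ is $(n-1)$-connected, every pointed map from $X$ into the $(n-1)$-truncation of $W$ is null, so a Postnikov/fibre-sequence argument collapses $[X, W]_\ast$ to $\Hom(\pi_n(X), \pi_n(W))$. Unwinding and using the tensor--hom adjunction gives $\Hom(\pi_n(X)^{ab} \otimes \pi_m(Y)^{ab}, C)$, and one checks the resulting natural bijection is precomposition by $\mu$.

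The main obstacle is not any single computation but the coherent naturality: each step above is natural in $X$, in $Y$, and in $C$, yet assembling these into an honest natural isomorphism of functors---rather than a family of pointwise isomorphisms---requires the $1$-coherent category and Yoneda machinery set up earlier, and it is here that the bookkeeping is most delicate. Two subsidiary points deserve care: verifying biadditivity of the smash pairing at the level needed to factor through the tensor product (the magma structure), and confirming that the abstract Yoneda isomorphism coincides with precomposition by the explicitly constructed $\mu$, so that the identification holds via the intended map. The base instance is reassuringly concrete: when $X = \sphere{n}$ and $Y = \sphere{m}$ it reduces to $\pi_{n+m}(\sphere{n+m}) \cong \Z \cong \Z \otimes \Z$, with $\mu$ carrying the canonical generator to a generator.
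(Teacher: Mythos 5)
Your proposal is correct in outline, but it takes a genuinely different route at the two places where the paper invests its effort, so a comparison is worthwhile. For connectivity, the paper never passes through the cofibre presentation: it shows directly that $(X \wedge Y \pto Z)$ is contractible for every pointed $(n+m-1)$-truncated $Z$, combining the smash--hom adjunction (\cref{lem:smashhomadjunction}) with two applications of \cref{lem:mappingspacetrunc}. Your wedge-connectivity-plus-cobase-change argument is the classical one and does go through in HoTT (your indexing is right: the inclusion is $(n+m-2)$-connected, and a pointed map $\ast \to Z$ is $k$-connected iff $Z$ is $(k+1)$-connected), but it needs three inputs the paper avoids: the identification of the HIT smash with the cofibre of $X \vee Y \to X \times Y$, the wedge connectivity lemma, and closure of connected maps under pushout. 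For the tensor statement, your isomorphism proof is, after unwinding, the same engine as the paper's: collapsing $[X,W]_\ast$, with $W \defeq (Y \pto K(C,n+m))$ $n$-truncated by \cref{lem:mappingspacetrunc}, to $\Hom(\pi_n(X),\pi_n(W))$ via the $(n-1)$-connected cover of $W$ (\cref{lem:conn-cover-univ}) and \cref{thm:groupsandspaces} is precisely the content of \cref{prop:connectedtotruncated} and \cref{thm:smashisequiv}. One structural difference: the paper's \cref{defn:tensor-product} makes ``precomposition with $t$ is an equivalence for every $C$'' the \emph{definition} of being a tensor product, so it fixes $C$, chases $\idfunc{\XY}$ through its diagram, and is done --- no Yoneda in $C$, no naturality of $K(-,n+m)$ in $C$, and no prior existence of $\otimes$ is assumed (existence is a corollary of \cref{thm:smashistensor}). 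Your version, by contrast, presupposes a constructed $\otimes$ and must verify naturality in $C$ of every link in the chain to invoke \cref{1-coherent-yoneda-embedding}, then still check the Yoneda isomorphism is precomposition by your $\mu$.

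The one place your route is substantially thinner than you acknowledge is the construction of the pairing itself. The paper does not smash sphere representatives: it applies $\sma$ (\cref{defn:sm}) to $\sminsbt : X \pto Y \pto \XY$, and the magma machinery of \cref{ss:magmas} exists precisely because biadditivity and naturality of a representative-level pairing is heavy coherence work in HoTT. Your appeal to compatibility of $\wedge$ with the pinch maps would require a coherent choice of equivalences $\sphere{n} \wedge \sphere{m} \simeq \sphere{n+m}$ (not constructed in the paper, though it would follow by iterating \cref{lem:suspensionandsmash}) intertwining the co-H structures, basepoint coherences, well-definedness on $0$-truncations, and naturality in $X$ and $Y$. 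None of this is false, but it is most of the real work; the paper's factorization of $\sma$ through loop-space magmas makes bilinearity fall out of Eckmann--Hilton in double loop spaces, with the propositional truncation in the definition of magma map suppressing exactly the higher path algebra your sketch would otherwise owe.
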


Taking $Y$ to be $K(A, m)$ in this result shows that the groups appearing
in the sequential diagram~\eqref{eq:colim} are tensor products of $\pi_n(X)^{ab}$ and $A$.
The proof of the Hurewicz theorem follows from showing that the induced maps
are isomorphisms, which we do in \cref{lem:stabilization}.
With this ingredient, we prove the Hurewicz theorem as \cref{thm:hurewicz}.

In order to define the isomorphism appearing in \cref{thmS},
we must give a bilinear map $\pi_n(X) \Gto \pi_m(Y) \Gto \pi_{n+m}(X \wedge Y)$.
To do so,
we define and study a more general natural map
    \[
      \sma : (X \pto Y \pto Z) \lra (\pi_n(X) \Gto \pi_m(Y) \Gto \pi_{n+m}(Z))
    \]
for any pointed types $X$, $Y$ and $Z$ and any $n, m \geq 1$.
The map we require is obtained by applying $\sma$ to
the natural map $X \pto Y \pto X \wedge Y$.

Constructing the map $\sma$ requires some work.
While it lands in group homomorphisms between ($0$-truncated) groups,
in order to construct it, we pass through \emph{magmas}.
A \dfn{magma} is a (not necessarily truncated) type $M$ with a binary operation $\cdot : M \times M \to M$,
with no conditions or coherence laws.
As a technical trick which simplifies the formalization, we work with \emph{weak magma morphisms}.
A \dfn{weak magma morphism} from a magma $M$ to a magma $N$ is a map $f : M \to N$
which \emph{merely} has the property that it respects the operations.
This is sufficient for our purposes, because when $M$ and $N$ are groups,
it reproduces the notion of group homomorphism.
All loop spaces are magmas under path concatenation, and many natural maps
involving loop spaces are weak magma morphisms.
By working with magmas, we can factor the map $\sma$ into simpler pieces,
and still land in group homomorphisms at the end, without keeping track of
higher coherences.

Proving the rest of \cref{thmS} requires a number of results that build
on work in~\cite{BuchholtzDoornRijke}.
For example, \cref{lem:conn-cover-univ,thm:groupsandspaces} are results
of~\cite{BuchholtzDoornRijke}, which we use to prove \cref{prop:connectedtotruncated}:
for $n \geq 1$, $X$ a pointed, $(n-1)$-connected type, and $Y$ a pointed,
$n$-truncated type, the map
$\loopspacesym^n : (X \pto Y) \to (\loopspacesym^n X \Mto \loopspacesym^n Y)$
is an equivalence.
In order to prove this, we prove results about connected covers in
\cref{ss:connected-covers}.

We go on to define a natural Hurewicz homomorphism
$h_n : \pi_n(X)^{ab} \otimes A \to \tH_n(X; A)$,
without assuming any connectivity hypothesis on $X$, and show that it
is unique up to a sign among such natural transformations that give
isomorphisms for $X \equiv S^n$ and $A \equiv \Z$ (\cref{thm:Hurewicz-unique}).

\subsection*{Homology}
The theory of homology in homotopy type theory is currently limited by
the absence of some important tools and facts that would make it easier to compute.
For example, we don't have complete proofs that homology satisfies
the Eilenberg--Steenrod axioms, although partial work was done by~\cite{graham}.
The Serre spectral sequence for homology has not been formalized, but
high level arguments can be found in \cite{doornthesis} and it is expected
that techniques similar to those used for cohomology will go through.
We are also missing
the fact that the homology of a cellular space can be computed cellularly (which is done for cohomology in
\cite{BuchholtzFavonia}), the universal coefficient theorem, and the relationship
between homology and localization (developed in homotopy type theory
in~\cite{CORS} and~\cite{scoccola}).

\subsection*{Structure of the paper}
\cref{se:smash-tensor} contains our work on smash products and tensor products.
After listing our conventions in \cref{ss:background},
we give the basic theory of $1$-coherent categories in \cref{ss:wildcat}.
We use this theory to express and reason about natural transformations,
and we make use of the Yoneda lemma in this setting.
In \cref{ss:connected-covers} we study connected covers.
\cref{ss:magmas} introduces magmas and weak magma morphisms, and
proves a variety of results about loop spaces,
including \cref{prop:connectedtotruncated}, mentioned above.
We also define the map $\sma$ in this section.
We introduce smash products in \cref{ss:connectivity} and prove
the connectivity part of \cref{thmS} here.
\cref{ss:abelianization} is a short section that defines abelianization
and gives a particularly efficient construction of the abelianization
of a group as a higher inductive type.
In \cref{ss:tensors}, we define tensor products of abelian groups
and prove the second part of \cref{thmS}.
\cref{ss:smashtrunc} proves results about smash products, truncation
and suspension that are needed in \cref{se:hurewicz}.

\cref{se:hurewicz} applies the results of \cref{se:smash-tensor} to homology,
leading up to the Hurewicz theorem and its consequences.
In \cref{ss:prespectra-homology}, we define prespectra and their
stable homotopy groups, and use this to define homology.
The Hurewicz theorem is proved in \cref{ss:hurewicz},
and we describe the Hurewicz homomorphism and its uniqueness up to sign
in \cref{ss:hurewicz-hom}.
In \cref{ss:applications}, we give some applications of our main results.

\subsection*{Formalization}
Formalization of these results is in progress, with help from Ali Caglayan,
using the Coq HoTT library~\cite{HoTTCoq}.
The current status can be seen at~\cite{GH}, where the \texttt{README.md} file
explains where results from the paper can be found.
Currently, we have formalized much of Section 2 but none of Section 3.
In Section 2, the only substantial result that is missing is \cref{thm:smashistensor}.
Also missing are \cref{thm:smashisequiv}
and the naturality of many of the maps defined in this section.
In our formalization, we take as axioms several results that have
been formalized in other proof assistants.

\endgroup

\addtocontents{toc}{\protect\setcounter{tocdepth}{2}}
\section{Smash products and tensor products}\label{se:smash-tensor}

In this section, we give a variety of results about loop spaces,
magmas, smash products and tensor products,
including the proof of \cref{thmS}.
None of the results in this section depend on the definition of homology,
but these results are used in the next section to prove the Hurewicz theorem.

\subsection{Background and conventions}\label{ss:background}

We follow the conventions and notation used in~\cite{hottbook}.
We assume we have a univalent universe $\UU$ closed under
higher inductive types (HITs) and contained in another universe $\UU'$.
In fact, the higher inductive types we use can all be described using pushouts and truncations.
By convention, all types live in the lower universe $\UU$, unless explicitly stated.
We implicitly use function extensionality for $\UU$ throughout.

A \define{pointed type} is a type $X$ and a choice of $x_0 : X$,
and the type of pointed types is denoted $\UU_{\sbt} \defeq \sm{X : \UU} X$.
We often keep the choice of basepoint implicit.
A \define{pointed map} between pointed types $X$ and $Y$ is a map $f : X \to Y$
and a path $p : f(x_0) = y_0$.
The type of pointed maps is denoted $X \to_{\sbt} Y \defeq \sm{f : X \to Y} f(x_0) = y_0$.

We frequently make use of functions of type $X \to Y \to Z$, and remind
the reader that this associates as $X \to (Y \to Z)$, which is the curried
form of a function $X \times Y \to Z$.

In the paper, we define the sum $m + n$ of natural numbers by induction on $n$,
so that $m+1$ is the successor of $m$.
In the HoTT library, the other convention is used, so to translate between
the paper and the formalization, one must change $m+n$ to $n+m$ everywhere.

\subsection{$1$-coherent categories}\label{ss:wildcat}

In this section, we briefly discuss the notion of $1$-coherent category,
which we use to express that various constructions are natural.
The definitions generalize those of~\cite[Section~4.3.1]{doornthesis}, which deals with
the $1$-coherent category of pointed types, except that our $\hom$ types
are unpointed.
A more general notion of wild category has been formalized in the
HoTT library~\cite{HoTTCoq} by Ali Caglayan, tslil clingman, Floris van Doorn,
Morgan Opie, Mike Shulman and Emily Riehl.

Recall that $\UU'$ is a universe such that $\UU : \UU'$.

\begin{defn}
    A \define{$1$-coherent category} $C$ consists of a type $C_0 : \UU'$, a map
    $\hom_C : C_0 \to C_0 \to \UU$, maps
    \begin{align*}
        \idfunc{} &: \prod_{a : C_0} \hom_C(a,a),\\
        - \circ - &: \prod_{a,b,c : C_0} \hom_C(b,c) \to \hom_C(a,b) \to \hom_C(a,c),
    \end{align*}
    and equalities
    \begin{align*}
        \unitl &: \prod_{a, b : C_0} \prod_{f : \hom_C(a,b)} \idfunc{b} \circ f = f\\
        \unitr &: \prod_{a, b : C_0} \prod_{f : \hom_C(a,b)} f \circ \idfunc{a} = f\\
        \assoc &: \prod_{a,b,c,d : C_0} \prod_{f : \hom_C(a,b)} \prod_{g : \hom_C(b,c)} \prod_{h : \hom_C(c,d)}
            (h \circ g) \circ f = h \circ (g \circ f),
    \end{align*}
    witnessing left and right unitality and associativity, respectively.
    We do not assume coherence laws or that any of the types are truncated.

    If $C$ is a $1$-coherent category,
    the elements of $C_0$ are called \define{objects} and, for objects $a,b : C_0$,
    the elements of $\hom_C(a,b)$ are called \define{morphisms} from $a$ to $b$.
\end{defn}

The wild $1$-categories considered in~\cite{HoTTCoq} allow $2$-cells to be specified,
which are then used in place of the identity types in the above equalities.
For simplicity, we use the identity types.

\begin{eg}
    There is a $1$-coherent category $\UUcat$ of types, with $\UUcat_0 \defeq \UU$
    and $\hom_\UUcat(X,Y) \defeq X \to Y$ for every pair of types $X,Y : \UU$.
    Identity morphisms, composition, unitalities, and associativity all work in the expected way.
\end{eg}

\begin{eg}
    There is a $1$-coherent category $\Grp$ of groups whose objects are the set-level groups, that is,
    $0$-truncated types equipped with an associative binary operation, a unit and inverses.
    The morphisms are standard group homomorphisms.
    
    Similarly, there is $1$-coherent category $\Ab$ of abelian groups.
\end{eg}

\begin{eg}
    Any precategory in the sense of \cite[Definition~9.1.1]{hottbook} gives rise to a $1$-coherent category,
    simply by forgetting that its hom types are sets.
    Moreover, the notions of isomorphism, functor, and natural transformation given in \cite[Section~9]{hottbook}
    are equivalent to the notions we give in this section, in the case of precategories.
\end{eg}

Many constructions one can carry out with categories are easy to extend to $1$-coherent categories.
We mention two that are particularly important for us.
Given a $1$-coherent category $C$, we can form the \define{opposite} $1$-coherent category
$C^\op$ by letting the type of objects of $C^\op$ be $C_0$, and $\hom_{C^\op}(a,b) \defeq \hom_C(b,a)$
for all $a,b : C_0$. The rest of the structure is straightforward to define.

Given $1$-coherent categories $C$ and $D$ one can form a \define{product} $1$-coherent category, denoted by $C \times D$.
The underlying type of $C \times D$ is simply $C_0 \times D_0$, and
$\hom_{C \times D}((c,d),(c',d')) \defeq \hom_C(c,c') \times \hom_D(d,d')$.
The rest of the structure is again straightforward to define.

\begin{defn}
    Let $C$ be a $1$-coherent category, $a,b : C_0$, and $f : \hom_C(a,b)$.
    An \define{isomorphism structure} for $f$ is given by morphisms $g,h : \hom_C(b,a)$
    together with paths $l : g \circ f = \idfunc{a}$ and $r : f \circ h = \idfunc{b}$.
\end{defn}

In many cases, such as in the $1$-coherent category $\UUcat$, being an isomorphism is a mere property
of a morphism.
The wild $1$-categories considered in~\cite{HoTTCoq} allow biinvertibility
to be replaced by more general notions of isomorphism.

\begin{defn}
    A \define{$1$-coherent functor} $F$ between $1$-coherent categories $C$ and $D$,
    usually denoted by $F : C \to D$, consists of a map $F_0 : C_0 \to D_0$, a map
    \[
        F_1 : \prod_{a,b : C_0} \hom_C(a,b) \to \hom_D(F_0(a),F_0(b)),
    \]
    and equalities
    \begin{align*}
        F_{\idfunc{}} &: \prod_{a : C_0} F(\idfunc{a}) = \idfunc{F(a)},\\
        F_\circ &: \prod_{a,b,c : C_0} \prod_{f : \hom_C(a,b)} \prod_{g : \hom_C(b,c)} F_1(g) \circ F_1(f) = F(g\circ f),
    \end{align*}
    witnessing the functoriality of $F$.
\end{defn}

\begin{eg}
    For a $1$-coherent category $C$ and an object $a : C_0$,
    we can define a $1$-coherent corepresentable functor $\Y^a : C \to \UUcat$.
    On objects we have $\Y^a_0(b) \defeq \hom_C(a,b)$.
    The action on morphisms is defined as $\Y^a_1(f) :\equiv \lambda g. f \circ g : \hom_C(a,b) \to \hom_C(a,c)$ for $f : \hom_C(b,c)$.
    The witnesses of functoriality, that is $\Y^a_{\idfunc{}}$ and $\Y^a_\circ$, are defined using the equalities $\unitl$ and $\assoc$ of $C$, respectively.
\end{eg}

\begin{defn}
    Let $C$ and $D$ be $1$-coherent categories and let $F,G : C \to D$ be $1$-coherent functors.
    A \define{$1$-coherent natural transformation} $\alpha$ from $F$ to $G$, usually denoted by $\alpha : F \to G$,
    consists of a map
    \[
        \alpha_0 : \prod_{a : C_0} \hom_D(F(a),G(a)),
    \]
    and equalities
    \[
        \alpha_1 : \prod_{a, b : C_0} \prod_{f : \hom_C(a,b)} \alpha_0(b) \circ F_1(f) = G_1(f) \circ \alpha_0(a).
    \]
\end{defn}

\begin{defn}
    Let $\alpha : F \to G$ be a $1$-coherent natural transformation between $1$-coherent
    functors $F, G : C \to D$, for $C$ and $D$ $1$-coherent categories.
    An \define{isomorphism structure} for $\alpha$ is given by an isomorphism structure
    for each of its components.
    A \define{natural isomorphism} is given by a natural transformation together with an isomorphism structure.
\end{defn}

The following lemma is straightforward.

\begin{lem}
    Let $C$ and $D$ be $1$-coherent categories, $F,G,H : C \to D$ $1$-coherent functors, and
    $\alpha : F \to G$ and $\beta : G \to H$ $1$-coherent natural transformations.
    Then, by defining $(\beta \circ \alpha)(c) \defeq \beta(c) \circ \alpha(c)$ and the naturality
    squares by composing the naturality squares of $\alpha$ and $\beta$, one obtains
    a natural transformation $\beta \circ \alpha : F \to H$. Moreover, if both $\alpha$ and $\beta$
    are natural isomorphisms, so is $\beta \circ \alpha$.\qed
\end{lem}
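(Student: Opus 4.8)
The plan is to verify that the proposed data assemble into a genuine 1-coherent natural transformation $\beta \circ \alpha : F \to H$, and then that isomorphism structures compose. Recall that a 1-coherent natural transformation consists of two pieces: a family of component morphisms and a family of naturality witnesses. So first I would define the components, setting $(\beta \circ \alpha)_0(c) \defeq \beta_0(c) \circ \alpha_0(c) : \hom_D(F(c), H(c))$, which typechecks because $\alpha_0(c) : \hom_D(F(c),G(c))$ and $\beta_0(c) : \hom_D(G(c), H(c))$. This part is immediate.

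The substantive step is constructing the naturality witness. For $f : \hom_C(a,b)$, I must produce a path
\[
    (\beta_0(b) \circ \alpha_0(b)) \circ F_1(f) = H_1(f) \circ (\beta_0(a) \circ \alpha_0(a)).
\]
The idea is to paste the two naturality squares. Starting from the left-hand side, I would first reassociate using $\assoc$ to move $F_1(f)$ next to $\alpha_0(b)$, obtaining $\beta_0(b) \circ (\alpha_0(b) \circ F_1(f))$; then apply $\alpha_1(f)$ (transported through $\beta_0(b) \circ -$ via $\ap{}$) to rewrite $\alpha_0(b) \circ F_1(f)$ as $G_1(f) \circ \alpha_0(a)$; reassociate again to expose $\beta_0(b) \circ G_1(f)$; apply $\beta_1(f)$ (transported through $- \circ \alpha_0(a)$) to rewrite this as $H_1(f) \circ \beta_0(a)$; and finally reassociate once more to reach the right-hand side. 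Concretely this is a chain of equalities alternating $\assoc$, $\ap{(\beta_0(b) \circ -)}(\alpha_1(f))$, $\ap{(- \circ \alpha_0(a))}(\beta_1(f))$, and further applications of $\assoc$, composed with path concatenation.

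The main obstacle is purely bookkeeping rather than conceptual: because we work in a 1-coherent category with no higher coherence laws, I must be careful that each rewrite is applied inside the correct composition context, using $\apfunc{}$ to whisker the naturality paths by the appropriate fixed morphism, and that the associativity paths are inserted with the correct orientation. There is no obligation to produce a canonical or coherent witness, since the definition imposes no equations on naturality witnesses, so any such composite path suffices and the construction never gets stuck.

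For the final clause, suppose $\alpha$ and $\beta$ are natural isomorphisms, so each has an isomorphism structure on every component. An isomorphism structure on a morphism in $D$ is exactly the kind of data that composes: given isomorphism structures $(g_\alpha, h_\alpha, l_\alpha, r_\alpha)$ for $\alpha_0(c)$ and $(g_\beta, h_\beta, l_\beta, r_\beta)$ for $\beta_0(c)$, I would take $g_\alpha \circ g_\beta$ and $h_\alpha \circ h_\beta$ as the candidate inverses for $\beta_0(c) \circ \alpha_0(c)$, and build the witnessing paths $l$ and $r$ by the same associate-cancel-associate pattern, using $l_\alpha, l_\beta$ (respectively $r_\alpha, r_\beta$) together with $\assoc$, $\unitl$ and $\unitr$. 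Since an isomorphism structure for a natural transformation is defined componentwise, doing this at each $c : C_0$ yields an isomorphism structure for $\beta \circ \alpha$, completing the proof.
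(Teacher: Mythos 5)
Your proposal is correct and is exactly the routine argument the paper has in mind when it declares the lemma straightforward and omits the proof: components compose, each naturality witness is the path built from $\assoc$ together with $\ap{(\beta_0(b) \circ -)}(\alpha_1(f))$ and $\ap{(- \circ \alpha_0(a))}(\beta_1(f))$, and the biinvertibility data composes componentwise with $g_\alpha \circ g_\beta$ as left inverse and $h_\alpha \circ h_\beta$ as right inverse, cancelled via $\assoc$ and $\unitl$. Nothing is missing, and you rightly note that the absence of coherence laws means any such composite witness suffices.
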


The following is a $1$-coherent version of the fact that the Yoneda functor is an embedding.

\begin{prop}[{\cite{HoTTCoq}}]\label{1-coherent-yoneda-embedding}
    Let $C$ be a $1$-coherent category and let $a,b : C_0$.
    Assume given a $1$-coherent natural isomorphism $\alpha : \Y^b \to \Y^a$.
    Then $i \defeq \alpha_0(b)(\idfunc{b}) : \hom_C(a,b)$ is part of an isomorphism between $a$ and $b$,
    and it satisfies, for every $c : C_0$,
    \[
        \alpha_0(c) = \lambda g. g \circ i
    \]
    as maps $\hom_C(b,c) \to \hom_C(a,c)$.  \qed
\end{prop}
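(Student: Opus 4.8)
\subsection*{Proof proposal}

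The plan is to run the usual Yoneda argument, while keeping careful track of the two layers of composition involved: composition in $\UUcat$ (which is ordinary function composition) and composition in $C$. I would prove the displayed formula $\alpha_0(c) = \lambda g.\, g \circ i$ first, and then deduce that $i$ is an isomorphism from it, using that the components of $\alpha$ are equivalences.

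For the formula, I would unwind the naturality datum $\alpha_1$ of $\alpha$. Writing the naturality index as $f : \hom_C(c_1, c_2)$ and recalling that $\Y^b_1(f) = \lambda h.\, f \circ h$ and $\Y^a_1(f) = \lambda k.\, f \circ k$, the naturality square $\alpha_0(c_2) \circ \Y^b_1(f) = \Y^a_1(f) \circ \alpha_0(c_1)$ unfolds, after evaluating at $h : \hom_C(b, c_1)$, to the pointwise identity $\alpha_0(c_2)(f \circ h) = f \circ \alpha_0(c_1)(h)$. Specializing to $c_1 \defeq b$, $c_2 \defeq c$, $f \defeq g : \hom_C(b,c)$ and $h \defeq \idfunc{b}$ gives $\alpha_0(c)(g \circ \idfunc{b}) = g \circ \alpha_0(b)(\idfunc{b}) = g \circ i$, and $\unitr$ rewrites the left-hand side as $\alpha_0(c)(g)$. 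This yields $\alpha_0(c) = \lambda g.\, g \circ i$ for every $c$.

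It then remains to exhibit an inverse for $i$. From the isomorphism structure carried by the natural isomorphism $\alpha$, each component $\alpha_0(c) : \hom_C(b,c) \to \hom_C(a,c)$ is biinvertible, hence an equivalence of types, and by the formula this equivalence is $\lambda g.\, g \circ i$. I would set $j \defeq \alpha_0(a)^{-1}(\idfunc{a}) : \hom_C(b,a)$; applying the formula at $c \defeq a$ gives $j \circ i = \alpha_0(a)(j) = \idfunc{a}$ directly. For the other composite I would compute at $c \defeq b$ that $\alpha_0(b)(i \circ j) = (i \circ j) \circ i = i \circ (j \circ i) = i \circ \idfunc{a} = i$, using $\assoc$ and $\unitr$, while $\alpha_0(b)(\idfunc{b}) = \idfunc{b} \circ i = i$ by $\unitl$; since $\alpha_0(b)$ is an equivalence and hence injective, we conclude $i \circ j = \idfunc{b}$. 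Thus $(j,j)$ is an isomorphism structure for $i$, completing the proof.

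The argument is conceptually the standard Yoneda computation, so I do not expect a genuine conceptual obstacle. The main source of friction will be the bookkeeping: one must consistently distinguish function composition in $\UUcat$ from composition in $C$ when unfolding the naturality square, and---because $C$ is only a $1$-coherent category with no assumed coherence---every cancellation of an identity or reassociation must be justified by an explicit use of $\unitl$, $\unitr$, or $\assoc$. Getting the variance right and choosing the correct specialization of the naturality index is the step most likely to trip up a formalization.
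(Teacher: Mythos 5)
Your proposal is correct and is essentially the paper's own argument: the paper gives no details, remarking only that ``the proof is the same as the usual proof'' (the standard Yoneda computation formalized in the HoTT library), which is exactly what you carry out, with the naturality square evaluated at $\idfunc{b}$, the formula $\alpha_0(c) = \lambda g.\, g \circ i$ derived via $\unitr$ and function extensionality, and the inverse $j \defeq \alpha_0(a)^{-1}(\idfunc{a})$ checked using $\assoc$, $\unitr$, $\unitl$, and the injectivity of the biinvertible component $\alpha_0(b)$. Your closing caveats about tracking the two composition layers and justifying each unit and associativity step match the paper's intent, including its remark that the naturality data of $\alpha$ is not claimed to be recoverable from associativity alone.
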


The proof is the same as the usual proof, and has been formalized in
the HoTT library~\cite{HoTTCoq}.
Note that we are not claiming that the naturality proofs for $\alpha$
can be recovered using the associativity of composition.

\subsection{Connected covers}\label{ss:connected-covers}

In order to generalize a result of Buchholtz, van Doorn and Rijke
(see \cref{thm:groupsandspaces}) to the case where $Y$
has no connectivity assumption, we prove some results about connected covers.
In this section, we fix $n \geq -1$.

\begin{defn}
    A type $X$ is \define{$n$-connected} if $\ttrunc{n}{X}$ is contractible.
\end{defn}

For $X$ pointed, it is equivalent to require that $\pi_i(X)$ be trivial
for all $i \leq n$.
Every pointed type is $(-1)$-connected.

\begin{defn}
    Let $X$ be a pointed type.
    The \define{$n$-connected cover}
    $X\cov{n}$ of $X$ is defined to be the
    fibre of the pointed map $X \pto \ttrunc{n}{X}$.
\end{defn}

Note that $X\cov{n}$ is indeed $n$-connected and that
we have a canonical pointed map $i : X\cov{n} \pto X$
which induces an equivalence on the homotopy groups $\pi_k$ for $k > n$.
In fact, this map has a stronger universal property:

\begin{defn}
    A pointed map $f : X \pto Y$ is an \define{$\cov{n}$-equivalence} if
    for any pointed, $n$-connected type $Z$, post-composition by $f$ gives an equivalence
    \[
        (Z \pto X) \llra{\sim} (Z \pto Y).
    \]
\end{defn}

\begin{lem}[{\cite[Lemma~6.2]{BuchholtzDoornRijke}}]\label{lem:conn-cover-univ}
    Let $X$ be a pointed type.
    Then the map $i : X\cov{n} \pto X$ is an $\cov{n}$-equivalence.  \qed
\end{lem}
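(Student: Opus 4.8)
The plan is to exploit the defining fibre sequence $X\cov{n} \llra{i} X \llra{q} \ttrunc{n}{X}$, where $q$ is the truncation map and $i$ is the fibre inclusion, and to show that post-composition with $i$ is an equivalence by computing its fibres. The two ingredients are an orthogonality fact between $n$-connected and $n$-truncated types, and the universal property of the fibre $X\cov{n}$.

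First I would record the orthogonality fact: since $Z$ is pointed and $n$-connected and $\ttrunc{n}{X}$ is $n$-truncated, the pointed mapping type $Z \pto \ttrunc{n}{X}$ is contractible. Unpointedly this is the statement that $\mathsf{const} : \ttrunc{n}{X} \to (Z \to \ttrunc{n}{X})$ is an equivalence, which is the standard characterization of $n$-connectedness of $Z$ (\cite[Corollary~7.5.9]{hottbook}); since evaluation at $z_0$ is a retraction of $\mathsf{const}$, it is then itself an equivalence, and passing to its fibre over the basepoint of $\ttrunc{n}{X}$ shows that the pointed mapping type is contractible.

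Next I would unfold the universal property of $X\cov{n} \equiv \fib{q}{\ast}$. A pointed map $\tilde g : Z \pto X\cov{n}$ is, after distributing the $\Sigma$ defining the fibre through the mapping type, the same data as a pointed map $g : Z \pto X$ together with a pointed homotopy witnessing $q \circ g \htpy \mathsf{const}$; under this identification the operation $i \circ \blank$ becomes the first projection $\tilde g \mapsto g$. Hence the fibre of $i \circ \blank : (Z \pto X\cov{n}) \to (Z \pto X)$ over a given $g$ is exactly the type of pointed nullhomotopies of $q \circ g$, that is, the type of pointed paths from $q \circ g$ to the constant map inside $Z \pto \ttrunc{n}{X}$.

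Finally I would combine the two steps: because $Z \pto \ttrunc{n}{X}$ is contractible, every identity type in it is contractible, so each fibre of $i \circ \blank$ is contractible and therefore $i \circ \blank$ is an equivalence, which is precisely the claim that $i$ is an $\cov{n}$-equivalence. I expect the main obstacle to be the bookkeeping in the universal-property step: correctly distributing the $\Sigma$ defining $X\cov{n}$ through the pointed mapping type and checking that the basepoint paths match up so that the resulting projection really is $i \circ \blank$. The orthogonality fact and the concluding contractibility argument are routine once that identification is in place.
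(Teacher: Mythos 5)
Your proposal is correct: the orthogonality step is exactly the characterization of $n$-connectedness against $n$-truncated types, and identifying $(Z \pto X\cov{n})$ with $\sum_{g : Z \pto X} (q \circ g = \mathsf{const})$ so that $i \circ \blank$ becomes the first projection makes each fibre a path type in the contractible type $Z \pto \ttrunc{n}{X}$, which is all that is needed. Note that the paper itself gives no proof of this lemma---it imports it from Buchholtz--van Doorn--Rijke (Lemma~6.2)---and your argument is essentially the standard one found there, so there is nothing to compare beyond the basepoint bookkeeping you already flagged, which indeed goes through.
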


It follows that the operation sending $X$ to $X\cov{n}$ is functorial
in a unique way making $i : X\cov{n} \pto X$ natural, and that
a map $f$ is an $\cov{n}$-equivalence if and only if $f\cov{n}$ is an
equivalence.

Note that there is a $1$-coherent category with objects all pointed types
and morphisms given by pointed functions. We denote this $1$-coherent category
by $\UUcat_\sbt$. There are $1$-coherent functors $\suspsym, \loopspacesym : \UUcat_\sbt \to \UUcat_\sbt$
forming a $1$-coherent adjunction, in the following sense.

\begin{lem}[{\cite[Lemma~6.5.4]{hottbook}}]\label{susp-loops-adjunction}
    Let $X$ and $Y$ be pointed types.
    There is an equivalence
    \[
        (\suspsym X \to_\sbt Y) \simeq (X \to_\sbt \loopspacesym Y),
    \]
    natural in $X$ and $Y$.
    Here, we are interpreting $(\suspsym(-) \to_\sbt -)$ and $(- \to_\sbt \loopspacesym(-))$ 
    as $1$-coherent functors $\UUcat_\sbt^\op \times \UUcat_\sbt \to \UUcat$.  \qed
\end{lem}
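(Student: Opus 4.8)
The plan is to prove the lemma in two stages: first establish the underlying equivalence of types by unfolding both sides using the recursion principle of the suspension together with singleton contractions, and then upgrade this to a $1$-coherent natural isomorphism by realizing it as a composite of equivalences that are each natural in $X$ and $Y$.

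Write $x_0$ and $y_0$ for the basepoints of $X$ and $Y$. By the recursion principle of $\suspsym X$, a map $f : \suspsym X \to Y$ is determined by a point $f(\north)$, a point $f(\south)$, and the family $\ap{f}(\merid(-))$, so that
\[
  (\suspsym X \to Y) \simeq \sm{n : Y}\sm{s : Y}(X \to n = s).
\]
Adjoining the pointing datum $f(\north) = y_0$ and then contracting the singleton $\sm{n : Y}(n = y_0)$ by based path induction gives
\[
  (\suspsym X \pto Y) \simeq \sm{s : Y}(X \to y_0 = s).
\]
I would then reparametrize by peeling off the value at the basepoint: the maps $(s, m) \mapsto (s,\, m(x_0),\, \lambda x.\, (m(x) \cdot m(x_0)^{-1}))$ and $(s, p, g) \mapsto (s,\, \lambda x.\, (g(x) \cdot p))$ are mutually inverse and exhibit
\[
  \sm{s : Y}(X \to y_0 = s) \simeq \sm{s : Y}\sm{p : y_0 = s}\sm{g : X \to \loopspacesym Y}(g(x_0) = \refl{y_0}).
\]
Since $\sm{s : Y}\sm{p : y_0 = s}$ is contractible and the remaining factor is exactly $(X \pto \loopspacesym Y)$, this yields the desired equivalence; this is the content of \cite[Lemma~6.5.4]{hottbook}.

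For naturality I would verify that each equivalence in this chain is compatible with the two functorial actions: post-composition by a pointed map $h : Y \pto Y'$ together with the induced $\loopspacesym(h)$ on the loop side, and pre-composition by a pointed map $k : X' \pto X$ together with the induced $\suspsym(k)$ on the suspension side. The recursion-principle equivalence is natural because the action of $\suspsym$ on morphisms and the operation $f \mapsto \ap{f}$ are both governed by the constructors $\north$, $\south$ and $\merid$; the contraction and the reparametrization are built from concatenation and inversion of paths, which commute with $\ap{h}$ and with precomposition. Assembling the naturality squares for the composite and the evident isomorphism structure on each component, and invoking the lemma from \cref{ss:wildcat} that a composite of natural isomorphisms is a natural isomorphism, produces the required $1$-coherent natural isomorphism between the two functors $\UUcat_\sbt^\op \times \UUcat_\sbt \to \UUcat$.

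The main obstacle is the naturality bookkeeping rather than the bare equivalence. The reparametrization step introduces the conjugate $m(x) \cdot m(x_0)^{-1}$, and checking that this is respected by $\loopspacesym(h)$ on one side and by $\suspsym(k)$ on the other requires tracking how $\ap{h}$ and the functorial action of the suspension interact with path concatenation and inversion, along with several whiskering identities. Each such identification is a routine path-algebra computation, so I would isolate the coherence for each elementary equivalence once and then transport it through the composite; this is also how the statement is organized in the formalization cited for the surrounding results.
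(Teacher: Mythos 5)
Your proposal is correct, but note that the paper itself gives no proof of \cref{susp-loops-adjunction}: the equivalence is quoted from \cite[Lemma~6.5.4]{hottbook} (hence the immediate \textsc{qed}), and the naturality is explicitly deferred, the paper remarking right after the statement that the naturality is not proven in \cite{hottbook} but is proven in the HoTT library \cite{HoTTCoq}. Your chain of equivalences---suspension recursion giving $(\suspsym X \to Y) \simeq \sm{n : Y}{s : Y}(X \to n = s)$, singleton contraction of the pointing datum, and the conjugation reparametrization isolating $(X \pto \loopspacesym Y)$---is essentially the book's own proof of Lemma~6.5.4, so on the bare equivalence you have reconstructed the cited argument rather than found a different one. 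One small slip: as written, your forward map $(s, m) \mapsto \bigl(s,\, m(x_0),\, \lambda x.\, m(x) \cdot m(x_0)^{-1}\bigr)$ does not supply the last component of your stated codomain; you must also provide the right-inverse witness $m(x_0) \cdot m(x_0)^{-1} = \refl{y_0}$, after which your two maps are indeed mutually inverse up to the usual groupoid laws. As for naturality---the only part of the statement the paper actually leans on a formalization for---your plan (verify each elementary equivalence against precomposition by $\suspsym(k)$ and postcomposition by $\loopspacesym(h)$, then compose the resulting natural isomorphisms using the composition lemma from \cref{ss:wildcat}) has the right shape and matches how the library proof is organized; the whiskering and path-algebra obligations you flag, especially for the conjugation step, are genuine but routine. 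In short: correct, with the caveat that the naturality bookkeeping is exactly where the paper declines to give details, and your sketch likewise leaves those details at the level of routine path algebra rather than discharging them.
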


The naturality is not proven in~\cite{hottbook}, but is proven in
the HoTT library~\cite{HoTTCoq}.

The following two facts will be used in \cref{prop:connectedtotruncated}.

\begin{prop}\label{loops-conn-equiv}
    Let $f : X \pto Y$ be a pointed map.
    If $f$ is an $\cov{n+1}$-equivalence, then $\loopspacesym f$ is an
    $\cov{n}$-equivalence.
\end{prop}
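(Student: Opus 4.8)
The plan is to deduce the statement for $\loopspacesym f$ from the hypothesis on $f$ by means of the suspension--loop adjunction of \cref{susp-loops-adjunction}. Fix a pointed, $n$-connected type $W$; the goal is to show that post-composition with $\loopspacesym f$ is an equivalence $(W \pto \loopspacesym X) \to (W \pto \loopspacesym Y)$. Since $W$ is arbitrary, this is exactly the assertion that $\loopspacesym f$ is an $\cov{n}$-equivalence.

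First I would observe that $\suspsym W$ is $(n+1)$-connected, because suspension raises connectivity by one; this lets $\suspsym W$ serve as a test object for the hypothesis on $f$. As $f$ is an $\cov{n+1}$-equivalence, post-composition with $f$ is therefore an equivalence $(\suspsym W \pto X) \to (\suspsym W \pto Y)$. Next I would apply \cref{susp-loops-adjunction}, whose naturality in the second (covariant) variable---with the first variable fixed at $W$---relates the two post-composition maps through the adjunction equivalences $\Phi_X \colon (\suspsym W \pto X) \simeq (W \pto \loopspacesym X)$ and $\Phi_Y \colon (\suspsym W \pto Y) \simeq (W \pto \loopspacesym Y)$. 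Concretely, naturality gives $\Phi_Y \circ (f \circ \blank) = (\loopspacesym f \circ \blank) \circ \Phi_X$. Because $\Phi_X$ and $\Phi_Y$ are equivalences and, by the previous step, $f \circ \blank$ is an equivalence, it follows that $\loopspacesym f \circ \blank = \Phi_Y \circ (f \circ \blank) \circ \Phi_X^{-1}$ is a composite of equivalences, hence an equivalence. This completes the argument.

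The two ingredients carrying real content are the naturality of the adjunction in the second variable (stated above and proven in the HoTT library) and the fact that suspension increases connectivity by one; the remaining steps---closure of equivalences under composition, and the identification of post-composition across the adjunction---are routine. I expect the connectivity-of-suspension fact to be the main point to pin down: if it is not already available in the form needed, I would derive it from the description of $\suspsym W$ as the pushout $\unit \leftarrow W \to \unit$ together with a standard connectivity estimate for pushouts.
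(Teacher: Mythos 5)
Your proposal is correct and takes essentially the same route as the paper: both arguments test against an $n$-connected type, use that suspension raises connectivity by one, and transport the post-composition equivalence for $f$ across the naturality square of the suspension--loop adjunction (\cref{susp-loops-adjunction}) to conclude that post-composition with $\loopspacesym f$ is an equivalence. The connectivity-of-suspension fact you flag is indeed standard (it is invoked without further comment in the paper's proof), so no additional work is needed.
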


\begin{proof}
    Let $A$ be an $n$-connected, pointed type.
    By naturality of the adjunction between suspension and loops (\cref{susp-loops-adjunction}),
    we have a commutative square
\[
\begin{tikzcd}
  (\suspsym A \pto X) \ar[d,"\sim",swap] \ar[r,"f \circ -"] & (\suspsym A \pto Y) \ar[d,"\sim"] \\
  (A \pto \loopspacesym X) \ar[r,"\loopspacesym f \circ -"] & (A \pto \loopspacesym Y)
\end{tikzcd}
\]
    in which the vertical maps are equivalences.
    Since the suspension of an $n$-connected type is $(n+1)$-connected,
    the top map is also an equivalence.
    Therefore, the bottom map is an equivalence, as required.
\end{proof}

\begin{prop}\label{-1-equivalence}
    Let $f : X \pto Y$ be a pointed map.  If $f$ is a $\cov{-1}$-equivalence,
    then $f$ is an equivalence.
\end{prop}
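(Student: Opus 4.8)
The plan is to exploit the fact, noted just after the definition of connectivity, that \emph{every} pointed type is $(-1)$-connected. Consequently, the hypothesis that $f$ is a $\cov{-1}$-equivalence says precisely that post-composition
\[
  f \circ - : (Z \pto X) \lra (Z \pto Y)
\]
is an equivalence for \emph{every} pointed type $Z$. This is exactly the hypothesis of a co-Yoneda argument, and I would use it to build a two-sided inverse to $f$ by hand.

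First I would apply the hypothesis with $Z \defeq Y$. Since $f \circ - : (Y \pto X) \to (Y \pto Y)$ is an equivalence, I may set $s \defeq (f \circ -)^{-1}(\idfunc{Y}) : Y \pto X$, which comes equipped with a path $f \circ s = \idfunc{Y}$. Thus $s$ is a section of $f$ in the $1$-coherent category $\UUcat_\sbt$.

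Next I would promote $s$ to a retraction by applying the hypothesis with $Z \defeq X$. Both $s \circ f$ and $\idfunc{X}$ are elements of $(X \pto X)$, and using associativity together with the unit laws for composition of pointed maps I can compute
\[
  f \circ (s \circ f) = (f \circ s) \circ f = \idfunc{Y} \circ f = f = f \circ \idfunc{X}.
\]
Since $f \circ - : (X \pto X) \to (X \pto Y)$ is an equivalence, it is in particular an embedding, so this chain of identities (viewed as a single path between the two images) forces $s \circ f = \idfunc{X}$. Hence $s$ is a two-sided inverse of $f$ as a pointed map, and taking underlying maps and homotopies shows that $f$ is an equivalence.

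The argument is essentially the co-Yoneda embedding for $\UUcat_\sbt$, and could alternatively be packaged by applying \cref{1-coherent-yoneda-embedding} in the opposite category $\UUcat_\sbt^\op$, where post-composition by $f$ becomes the natural transformation of corepresentable functors induced by $f$; its components are equivalences by hypothesis, and the naturality squares are supplied by associativity. I expect no real obstacle here: the only point requiring a little care is the path bookkeeping in the retraction step, namely correctly assembling the associativity and unit witnesses and then invoking that an equivalence is an embedding.
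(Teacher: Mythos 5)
Your proof is correct, but it takes a genuinely different route from the paper's. You run a Yoneda-style argument over all test objects: since every pointed type is $(-1)$-connected, the hypothesis gives that $f \circ - : (Z \pto X) \to (Z \pto Y)$ is an equivalence for \emph{every} pointed $Z$; taking $Z \defeq Y$ produces a section $s$ with $f \circ s = \idfunc{Y}$, and taking $Z \defeq X$ together with associativity, unitality, and the fact that an equivalence is an embedding forces $s \circ f = \idfunc{X}$, so the underlying map of $f$ has a quasi-inverse. The paper instead tests against the single object $S^0$: since $S^0$ is $(-1)$-connected and $(S^0 \pto Z) \simeq Z$ naturally in $Z$, the map $f \circ - : (S^0 \pto X) \to (S^0 \pto Y)$ is conjugate to $f$ itself, so $f$ is an equivalence directly (the paper also notes a second alternative, via the facts that $Z\cov{-1} \pto Z$ is an equivalence for any pointed $Z$ and that $f\cov{-1}$ is an equivalence). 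Your argument buys generality: it is precisely the statement, valid in any $1$-coherent category and packageable via \cref{1-coherent-yoneda-embedding} applied in $\UUcat_\sbt^\op$ as you observe, that a morphism inducing equivalences on all corepresentables is biinvertible; the cost is that it leans on the $1$-coherent structure of $\UUcat_\sbt$ (the $\assoc$ and unit paths for pointed maps) plus the closing observation that a path of pointed maps projects to a homotopy of underlying maps, so a biinvertible pointed map has an underlying equivalence. The paper's argument buys economy: one test object, no inverse constructed by hand, and no associator bookkeeping --- only the natural evaluation equivalence $(S^0 \pto Z) \simeq Z$.
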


\begin{proof}
    Since $S^0$ is $(-1)$-connected, we know that $f$ induces an
    equivalence $(S^0 \pto X) \pto (S^0 \pto Y)$.
    Moreover, $(S^0 \pto Z)$ is equivalent to $Z$ for any pointed type $Z$,
    and this equivalence is natural.
    It follows that $f$ is an equivalence.
\end{proof}

This also follows from the facts that $Z\cov{-1} \pto Z$ is an equivalence
for any pointed $Z$, and that $f\cov{-1}$ is an equivalence.

\subsection{Loop spaces and magmas}\label{ss:magmas}
In this section, we study loop spaces and the natural magma structures that they carry
and define the map $\sma$ that plays an important role in this paper.
We begin by generalizing the following result of Buchholtz, van Doorn and Rijke.

\begin{thm}[{\cite[Theorem~5.1]{BuchholtzDoornRijke}}]\label{thm:groupsandspaces}
    Let $n \geq 1$.
    For $X$ and $Y$ pointed, $(n-1)$-connected, $n$-truncated types,
    the map
    \[
        \loopspacesym^n : (X \pto Y) \lra_{\sbt} (\loopspacesym^n X \Gto \loopspacesym^n Y).
    \]
    is an equivalence.
    \qed
\end{thm}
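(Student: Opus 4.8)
The plan is to argue by induction on $n$, reducing the statement to the case of pointed connected $1$-types by peeling off one loop at a time, where the crucial single-loop step is handled by the suspension--loop adjunction (\cref{susp-loops-adjunction}) together with the Freudenthal theorem.

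For the base case $n = 1$, the types $X$ and $Y$ are pointed, $0$-connected $1$-types, so they are deloopings $K(G,1)$ and $K(H,1)$ of the groups $G \defeq \loopspacesym X$ and $H \defeq \loopspacesym Y$; these loop spaces are $0$-truncated, hence genuine groups under path concatenation, and $\loopspacesym$ sends a pointed map to the homomorphism it induces on $\pi_1$. I would obtain $(X \pto Y) \simeq (G \Gto H)$ from the universal property of the delooping $K(G,1)$ (as provided by the Eilenberg--Mac Lane construction used in the paper): a pointed map out of $K(G,1)$ into a pointed $1$-type is the same datum as a group homomorphism out of $G$. This is the standard equivalence between groups and pointed connected $1$-types.

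For the inductive step, suppose $n \ge 2$ and the result holds for $n - 1$. Since $\loopspacesym$ drops both connectivity and truncation by one, $\loopspacesym X$ and $\loopspacesym Y$ are pointed, $(n-2)$-connected, $(n-1)$-truncated, so the induction hypothesis applies to them. Using $\loopspacesym^n = \loopspacesym^{n-1} \circ \loopspacesym$ on hom-types, I factor the map in question as
\[
  (X \pto Y) \xrightarrow{\ \loopspacesym\ } (\loopspacesym X \pto \loopspacesym Y) \xrightarrow{\ \loopspacesym^{n-1}\ } (\loopspacesym^n X \Gto \loopspacesym^n Y),
\]
whose second arrow is an equivalence by the induction hypothesis. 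It remains to show the first arrow $\loopspacesym : (X \pto Y) \to (\loopspacesym X \pto \loopspacesym Y)$ is an equivalence. Here I would use \cref{susp-loops-adjunction} to identify $(\loopspacesym X \pto \loopspacesym Y) \simeq (\suspsym\loopspacesym X \pto Y)$, under which this arrow becomes precomposition with the counit $\epsilon_X : \suspsym\loopspacesym X \pto X$. As $Y$ is $n$-truncated, pointed maps into $Y$ factor through the $n$-truncation, so this precomposition is an equivalence provided $\ttrunc{n}{\epsilon_X}$ is an equivalence, i.e. provided $\epsilon_X$ induces isomorphisms on $\pi_j$ for all $j \le n$. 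Since $X$ is $(n-1)$-connected, the Freudenthal theorem shows $\epsilon_X$ is $2(n-1)$-connected, and $2(n-1) \ge n$ exactly when $n \ge 2$, giving the required isomorphisms.

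The main obstacle is this single-loop step, and in particular establishing the connectivity of the counit $\epsilon_X$ with the correct Freudenthal range. This is the only place the hypothesis $n \ge 2$ enters, and it also explains why $n = 1$ must be handled separately: there the counit is merely $0$-connected, so it is surjective but not injective on $\pi_1$, and the map $\loopspacesym$ becomes an equivalence only after restricting the codomain from all pointed maps to group homomorphisms --- which is exactly the role of $\Gto$ in the statement. The remaining verifications (that the displayed factorization really computes $\loopspacesym^n$, and that $\loopspacesym f$ is a homomorphism when the targets are sets) are routine.
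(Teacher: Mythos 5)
Note first that the paper contains no proof of this statement to compare against: it is quoted from \cite{BuchholtzDoornRijke} with the proof deferred to that source. Your sketch is nonetheless correct, and it is essentially the strategy of the cited proof: induction on $n$, with the base case given by the classification of pointed connected $1$-types by groups, and the inductive step by the loop--suspension adjunction (\cref{susp-loops-adjunction}, whose recorded naturality is exactly what identifies $\loopspacesym$ with precomposition by the counit $\epsilon_X : \suspsym\loopspacesym X \pto X$) together with Freudenthal. Two compressed steps deserve explicit mention. First, the Freudenthal theorem in this setting (\cite[Theorem~8.6.4]{hottbook}) concerns the \emph{unit} $\eta_{\loopspacesym X} : \loopspacesym X \to \loopspacesym\suspsym\loopspacesym X$, which is $2(n-2)$-connected here; to get your claim about $\epsilon_X$ one transfers across the triangle identity $\loopspacesym\epsilon_X \circ \eta_{\loopspacesym X} = \idfunc{}$: this makes $\pi_j(\epsilon_X)$ split epi for all $j$, and injective for $j \leq 2(n-1)$ because $\pi_{j-1}(\eta_{\loopspacesym X})$ is surjective in that range, so $\pi_j(\epsilon_X)$ is an isomorphism for $j \leq 2(n-1)$ --- confirming that your bound $2(n-1)$ is correct in the paper's conventions, and $2(n-1) \geq n$ exactly when $n \geq 2$, as you say. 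Passing from these $\pi_j$-isomorphisms to the statement that $\epsilon_X^* : (X \pto Y) \to (\suspsym\loopspacesym X \pto Y)$ is an equivalence for $n$-truncated $Y$ uses that $\ttrunc{n}{\epsilon_X}$ is an equivalence, which follows by the truncated Whitehead theorem since both $\suspsym\loopspacesym X$ and $X$ are connected; your formulation via isomorphisms on $\pi_j$ for $j \leq n$ (rather than full $n$-connectedness of $\epsilon_X$) is the right criterion here. Second, the base case requires not only the universal property of $K(G,1)$ from \cite{FinsterLicata} but also that every pointed connected $1$-type is a delooping of its loop group, and that the resulting equivalence $(X \pto Y) \simeq (\loopspacesym X \Gto \loopspacesym Y)$ is literally $f \mapsto \loopspacesym f$; these are standard but constitute the genuine $n=1$ content of the theorem rather than something strictly easier. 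With those details supplied, your argument goes through, and your closing remark correctly isolates why $n=1$ must be treated separately (the counit is only $0$-connected there, which is precisely what the restriction to homomorphisms in the codomain compensates for).
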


In order to state our generalization, we introduce the notion of magma.

\begin{defn}
    A \define{magma} is given by a type $X$ together with an operation $\cdot_X : X \times X \to X$.
    A \define{map of magmas} is given by a map $f : X \to Y$ between the underlying types
    that merely respects the operations.
    More formally, we define
\[
    X \Mto Y := \sm{f : X \to Y} \ttrunc{-1}{\prd{x, x' : X} f(x \cdot_X x') = f(x) \cdot_Y f(x')} .
\]
    An \define{equivalence of magmas} is a map of magmas whose underlying map
    is an equivalence.  We write $X \Msimeq Y$ for the type of magma equivalences
    from $X$ to $Y$.
    Magmas form a $1$-coherent category that we denote $\Mgm$.
    We will omit the subscript on the operation $\cdot$ when it is clear from context.
\end{defn}

The propositional truncation in the definition of magma map is a technical
trick to simplify the formalization.
With our definition, the type of equalities between magma maps
is equivalent to the type of equalities between the underlying maps.
All of our results should go through without the truncation,
but omitting it leads to path algebra that is not needed
in order to get our later results.
The maps we are considering should be called ``weak magma maps,'' but since
they are the only maps we use, we simply call them ``magma maps'' in this paper.

\begin{defn}
  A \define{pointed magma} is a magma $X$ with a chosen point $x_0 : X$
  and an equality $x_0 \cdot x_0 = x_0$.
  A \define{map of pointed magmas} is a pointed map $f : X \pto Y$ whose
  underlying map $f : X \to Y$ is a map of magmas.
  We write $X \pMto Y$ for the type of pointed magma maps.
  An \define{equivalence of pointed magmas} is a map of pointed magmas
  whose underlying map is an equivalence.
  We write $X \pMsimeq Y$ for the type of pointed magma equivalences.
  Pointed magmas form a $1$-coherent category that we denote $\pMgm$.
\end{defn}

There are no propositional truncations in the above definition, except
for the one in the definition of magma map.

\begin{rmk}\label{rmk:magmas}
The loop space $\loopspacesym X$ is a pointed magma for any pointed type $X$,
with path concatenation as the operation, reflexivity as the basepoint,
and a higher reflexivity as the proof that the basepoint is idempotent.
There
is a natural map\break $\loopspacesym : (X \pto Y) \pto (\loopspacesym X \pMto \loopspacesym Y)$,
which can be iterated.
Any magma map $\loopspacesym X \Mto \loopspacesym Y$ induces a group homomorphism $\pi_1(X) \Gto \pi_1(Y)$.
Also note that for groups $G$ and $H$, $(G \Gto H) \simeq (G \Mto H)$,
where we write $G \Gto H$ for the type of group homomorphisms.
(We assume that all groups have an underlying type that is a set,
which means that the propositional truncation can be removed.)

When $X$ is a pointed magma and $G$ is a group, every magma map $X \Mto G$
can be made pointed in a unique way, so the forgetful map $(X \pMto G) \to (X \Mto G)$
is an equivalence.

When $A$ is a pointed type and $X$ is a pointed magma, the type $A \pto X$
of pointed maps is a pointed magma under the pointwise operation.
The requirement that the basepoint $x_0 : X$ be idempotent ensures that for
$f, g : A \pto X$, $f \cdot g$ is again pointed:
$(f \cdot g)(a_0) \equiv f(a_0) \cdot g(a_0) = x_0 \cdot x_0 = x_0$.

Similarly, when $Y$ is a pointed magma and $Z$ is a pointed type,
the type $Y \pMto \loopspacesym^2 Z$ of pointed magma maps
and the type $Y \Mto \loopspacesym^2 Z$ of all magma maps
are pointed magmas under the pointwise operation.
This uses that path composition in the double loop space is commutative
(by Eckmann--Hilton) and associative.
(More precisely, we only use that the operation is merely commutative
and merely associative, which will be convenient in \cref{defn:sm}.)
\end{rmk}

With this background, we can now state our first generalization of \cref{thm:groupsandspaces}.

\begin{prop}\label{prop:connectedtotruncated}
    Let $n \geq 1$,
    let $X$ be a pointed, $(n-1)$-connected type, and let $Y$ be a pointed, $n$-truncated type.
    Then the map
    \[
        \loopspacesym^n : (X \pto Y) \llra{\sim} (\loopspacesym^n X \Mto \loopspacesym^n Y),
    \]
    is an equivalence, natural in $X$ and $Y$.
    Similarly,
    \[
        \loopspacesym^n : (X \pto Y) \llra{\sim} (\loopspacesym^n X \pMto \loopspacesym^n Y),
    \]
    is a natural equivalence.
\end{prop}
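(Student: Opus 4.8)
The plan is to reduce to \cref{thm:groupsandspaces} by replacing $X$ and $Y$ with types that are simultaneously $(n-1)$-connected and $n$-truncated. Set $X' \defeq \ttrunc{n}{X}$, with truncation unit $\eta : X \pto X'$, and $Y' \defeq Y\cov{n-1}$, with cover inclusion $i : Y' \pto Y$ (\cref{lem:conn-cover-univ}). Truncation preserves connectivity, so $X'$ is $(n-1)$-connected, and it is $n$-truncated by construction. Dually, $Y'$ is $(n-1)$-connected by construction, and it is $n$-truncated because it is the fibre of a map between $n$-truncated types ($Y$ and $\ttrunc{n-1}{Y}$). Hence both $X'$ and $Y'$ satisfy the hypotheses of \cref{thm:groupsandspaces}, so
\[
  \loopspacesym^n : (X' \pto Y') \llra{\sim} (\loopspacesym^n X' \Gto \loopspacesym^n Y')
\]
is an equivalence. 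Since $X'$ and $Y'$ are $n$-truncated and $n \geq 1$, the magmas $\loopspacesym^n X'$ and $\loopspacesym^n Y'$ are groups, so by \cref{rmk:magmas} I may replace $\Gto$ by $\Mto$ on the right, still via $\loopspacesym^n$.

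Next I would produce equivalences identifying the source and target for $(X,Y)$ with those for $(X',Y')$. On the source, postcomposition with $i$ gives $(X' \pto Y') \simeq (X' \pto Y)$ because $i$ is an $\cov{n-1}$-equivalence and $X'$ is $(n-1)$-connected, and precomposition with $\eta$ gives $(X' \pto Y) \simeq (X \pto Y)$ by the universal property of the $n$-truncation (as $Y$ is $n$-truncated); write $\Phi(g) \defeq i \circ g \circ \eta$ for the composite equivalence $(X' \pto Y') \simeq (X \pto Y)$. On the target, $\loopspacesym^n i : \loopspacesym^n Y' \to \loopspacesym^n Y$ is a magma equivalence: it is a map of sets (both $Y$ and $Y'$ are $n$-truncated) whose action on $\pi_0$ is the isomorphism $\pi_n(i)$, since $i$ induces isomorphisms on $\pi_k$ for $k \geq n$. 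And $\loopspacesym^n\eta : \loopspacesym^n X \to \loopspacesym^n X'$ exhibits $\loopspacesym^n X'$ as $\ttrunc{0}{\loopspacesym^n X}$, because $\loopspacesym^n$ commutes with the relevant truncations; in particular it is $0$-connected. Postcomposition with $\loopspacesym^n i$ and precomposition with $\loopspacesym^n \eta$ then assemble into an equivalence $\Psi(\phi) \defeq \loopspacesym^n i \circ \phi \circ \loopspacesym^n\eta$ from $(\loopspacesym^n X' \Mto \loopspacesym^n Y')$ to $(\loopspacesym^n X \Mto \loopspacesym^n Y)$.

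The step I expect to be the main obstacle is showing that precomposition with $\loopspacesym^n\eta$ really is an equivalence of \emph{magma}-map types rather than merely of underlying function types. On underlying maps it is an equivalence because $\loopspacesym^n\eta$ is $0$-connected and $\loopspacesym^n Y'$ is a set. For the truncated ``respects operations'' data I would use that $\loopspacesym^n\eta$ is a surjective magma map: if $\phi \circ \loopspacesym^n\eta$ respects operations, then so does $\phi$, since the goal $\phi(a\cdot b) = \phi(a)\cdot\phi(b)$ is a mere proposition (the target is a set) and $a,b$ may be lifted merely along the surjection $\loopspacesym^n\eta$. This is exactly the kind of bookkeeping that the propositional truncation in the definition of magma map is designed to make painless.

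Finally, I would check that the square
\[
\begin{tikzcd}
(X \pto Y) \ar[r,"\loopspacesym^n"] & (\loopspacesym^n X \Mto \loopspacesym^n Y)\\
(X' \pto Y') \ar[u,"\Phi","\simeq"'] \ar[r,"\loopspacesym^n","\sim"'] & (\loopspacesym^n X' \Mto \loopspacesym^n Y') \ar[u,"\Psi"',"\simeq"]
\end{tikzcd}
\]
commutes: $\loopspacesym^n(\Phi(g)) = \loopspacesym^n i \circ \loopspacesym^n g \circ \loopspacesym^n\eta = \Psi(\loopspacesym^n g)$ by functoriality of $\loopspacesym^n$ (and, since equality of magma maps is equality of underlying maps, no higher coherence is needed). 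As the bottom map and both verticals are equivalences, so is the top map, proving the first statement. Naturality in $X$ and $Y$ is automatic, since this top map is the action on hom-types of the $1$-coherent functor $\loopspacesym^n$, hence a $1$-coherent natural transformation by construction. For the second statement, $\loopspacesym^n Y$ is a group, so by \cref{rmk:magmas} the forgetful map $(\loopspacesym^n X \pMto \loopspacesym^n Y) \to (\loopspacesym^n X \Mto \loopspacesym^n Y)$ is an equivalence; it carries the pointed-magma map $\loopspacesym^n$ to the magma map $\loopspacesym^n$, so two-out-of-three gives that $\loopspacesym^n : (X \pto Y) \to (\loopspacesym^n X \pMto \loopspacesym^n Y)$ is a natural equivalence as well.
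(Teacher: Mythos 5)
Your proposal is correct and takes essentially the same route as the paper's proof: both reduce to \cref{thm:groupsandspaces} by replacing $X$ with $\ttrunc{n}{X}$ and $Y$ with $Y\cov{n-1}$, transfer the merely-truncated magma data along the $0$-connected map $\loopspacesym^n(|-|_n)$ by the same surjectivity-plus-propositionality argument, and deduce the pointed-magma statement from \cref{rmk:magmas}. The only local difference is that the paper shows $\loopspacesym^n(i)$ is an equivalence via \cref{loops-conn-equiv,-1-equivalence}, whereas you use the fact (asserted in the paper) that $i$ induces isomorphisms on $\pi_k$ for $k \geq n$ together with both loop spaces being sets; both are valid.
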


\begin{proof}
    Since $\Omega^n Y$ is a group, the second equivalence follows from the first,
    using \cref{rmk:magmas}, so we focus on the first one.
    By the functoriality of $\loopspacesym^n$, the diagram
\[
        \begin{tikzpicture}
          \matrix (m) [matrix of math nodes,row sep=2em,column sep=3em,minimum width=2em]
            { \left(X \pto Y\right) & \left(\loopspacesym^n X \Mto \loopspacesym^n Y\right) \\
            \left(\ttrunc{n}{X} \pto Y\right) & \left(\loopspacesym^n\left(\ttrunc{n}{X}\right) \Mto \loopspacesym^n Y \right)\\
            \left(\ttrunc{n}{X} \pto Y\cov{n-1}\right) & \left(\loopspacesym^n\left(\ttrunc{n}{X}\right) \Mto \loopspacesym^n\left(Y\cov{n-1}\right)\right) \\};
          \path[->]
            (m-1-1) edge node [above] {} (m-1-2)
            (m-2-1) edge node [above] {} (m-2-2)
                    edge node [left] {} (m-1-1)
            (m-3-1) edge node [above] {} (m-3-2)
                    edge node [left] {} (m-2-1)
            (m-2-2) edge node [right] {} (m-1-2)
            (m-3-2) edge node [right] {} (m-2-2)
            ;
        \end{tikzpicture}
\]
    commutes, where the vertical maps are induced by the maps
    $|-|_n : X \pto \ttrunc{n}{X}$ and $i : Y\cov{n-1} \pto Y$.
    The vertical maps on the left are equivalences by the universal properties
    of truncations and of connected covers.

    To see that the upper vertical map on the right is an equivalence, let
    $f$ denote the map
    $\loopspacesym^n(|-|_n) : \loopspacesym^n X \pto \loopspacesym^n(\ttrunc{n}{X})$.
    This map is $0$-connected, since $|-|_n$ is $n$-connected and $\loopspacesym$
    decreases connectivity.
    Since $\loopspacesym^n Y$ is a set, it follows that $f$ 
    induces an equivalence 
    $(\loopspacesym^n(\ttrunc{n}{X}) \to \loopspacesym^nY) \to
     (\loopspacesym^n(X) \to \loopspacesym^nY)$.
    Given $g : \loopspacesym^n(\ttrunc{n}{X}) \to \loopspacesym^nY$,
    we need to show that $g$ merely preserves the magma structures
    if and only if $g \circ f$ merely preserves the magma structures.
    The map $f$ induces an equivalence
\[
  \Bigg(\prod_{a, b : \loopspacesym^n(\Vert X \Vert_n)} g(a \cdot b) = g(a) \cdot g(b)\Bigg)
  \simeq
  \Bigg(\prod_{a, b : \loopspacesym^n(X)} g(f(a) \cdot f(b)) = g(f(a)) \cdot g(f(b))\Bigg),
\]
    since $f$ is $0$-connected and the identity types are sets (in fact, propositions).
    Note that $f$, being defined using the functoriality of $\loopspacesym$,
    preserves the concatenation operation (without any propositional truncation).
    It follows that the type on the right is
    equivalent to the type of proofs that $g \circ f$ preserves the magma structure.
    Therefore, the propositional truncations are also equivalent,
    so $f$ induces an equivalence on the types of magma maps.

    The lower vertical map on the right is an equivalence since
    $\loopspacesym^n(i)$ is an equivalence of magmas: it is certainly a map of magmas, and
    the fact that it is an equivalence follows from
    \cref{loops-conn-equiv,-1-equivalence}.
    
    The bottom horizontal map is an equivalence by \cref{thm:groupsandspaces},
    and so the top horizontal map is an equivalence, as required.

    The fact that $\loopspacesym^n$ is natural in $X$ and $Y$ follows from
    the functoriality of $\loopspacesym^n$ as an operation from pointed maps to
    magma maps, which is straightforward to check.
\end{proof}

Our next goal is to define the map $\sma$, using the following lemmas.

\begin{lem}\label{lem:homotopygroupmappingspace}
    Let $n \geq 1$ and let $Y$ and $Z$ be pointed types.
    Then there is an equivalence of pointed magmas
    \[
        \loopspacesym^n(Y \pto Z) \pMsimeq \left(Y \pto \loopspacesym^n Z \right),
    \]
    natural in $Y$ and $Z$.
    Here we are regarding $\loopspacesym^n(- \pto -)$ and $- \pto \loopspacesym^n(-)$
    as $1$-coherent functors $\UUcat_\sbt \times \UUcat_\sbt \to \pMgm$.
\end{lem}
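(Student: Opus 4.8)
The plan is to establish the case $n = 1$ directly and then obtain the general statement by induction, realizing the equivalence as a composite of pointed magma equivalences. For the base case, write the basepoint of $Y \pto Z$ as the pair $(c, \refl{z_0})$ with $c \defeq \lambda y.\, z_0$. The standard characterization of paths in $\sm{f : Y \to Z} f(y_0) = z_0$ identifies a loop at this basepoint with a pair $(p, r)$, where $p : c = c$ and $r$ witnesses that transporting $\refl{z_0}$ along $p$ in the family $\lambda f.\, (f(y_0) = z_0)$ returns $\refl{z_0}$. Function extensionality turns $p$ into a map $\hat p : \prd{y : Y}(z_0 = z_0)$, i.e.\ a map $Y \to \loopspacesym Z$, and a short transport computation shows that $r$ is precisely the condition $\hat p(y_0) = \refl{z_0}$ making $\hat p$ pointed. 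This yields the underlying equivalence $\loopspacesym(Y \pto Z) \simeq (Y \pto \loopspacesym Z)$, and since the constant loop is sent to the constant pointed map, it is a pointed equivalence.

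To see that this base-case map is a magma map, recall that the operation on $\loopspacesym(Y \pto Z)$ is path concatenation while the operation on $Y \pto \loopspacesym Z$ is the pointwise one from \cref{rmk:magmas}. Concatenation of loops in the $\Sigma$-type is concatenation $\ct{p_1}{p_2}$ in the first component, and function extensionality carries this to the pointwise concatenation $\widehat{\ct{p_1}{p_2}}(y) = \ct{\hat p_1(y)}{\hat p_2(y)}$. Since a magma map need only \emph{merely} respect the operation, exhibiting this single identity suffices and no higher coherence has to be tracked. Naturality in $Y$ and $Z$ follows because each ingredient — the $\Sigma$-path characterization, transport, and function extensionality — is natural; moreover, since the operation-preserving witness of a (pointed) magma map lives in a proposition (\cref{rmk:magmas}), the naturality squares reduce to equalities of underlying pointed maps.

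For general $n$, I would induct, assuming a natural pointed magma equivalence $\loopspacesym^{n-1}(Y \pto Z) \pMsimeq (Y \pto \loopspacesym^{n-1} Z)$ and forming
\[
  \loopspacesym^n(Y \pto Z) \;\pMsimeq\; \loopspacesym\!\left(Y \pto \loopspacesym^{n-1} Z\right) \;\pMsimeq\; \left(Y \pto \loopspacesym^n Z\right),
\]
where the first map is $\loopspacesym$ applied to the inductive hypothesis and the second is the base case with $Z$ replaced by $\loopspacesym^{n-1} Z$. Here, for a pointed equivalence $f$, the map $\loopspacesym f$ is again a pointed equivalence whose underlying map acts by $\ap{f}$ on loops, which automatically respects concatenation, $\ap{f}(\ct{\sigma}{\tau}) = \ct{\ap{f}(\sigma)}{\ap{f}(\tau)}$, as noted in \cref{rmk:magmas}; so the first map is a magma equivalence for the concatenation structures. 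Composites of pointed magma equivalences are again pointed magma equivalences, and both functor application and composition preserve naturality, so the composite is natural in $Y$ and $Z$.

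I expect the main obstacle to be the base-case compatibility with the magma operations: matching concatenation of loops in $\sm{f : Y \to Z} f(y_0) = z_0$ with the pointwise operation on $Y \pto \loopspacesym Z$ requires combining the $\Sigma$-path characterization, the transport computation in the family $\lambda f.\, (f(y_0) = z_0)$, and the fact that function extensionality sends concatenation to pointwise concatenation. The propositional truncation in the definition of magma map is what keeps this step manageable, since it lets me work entirely with underlying pointed maps and mere identities rather than coherence data.
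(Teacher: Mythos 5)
Your overall road map coincides with the paper's: establish $n=1$ by identifying loops in the $\Sigma$-type $Y \pto Z$ through the characterization of paths in $\Sigma$-types plus function extensionality, then iterate, using that $\loopspacesym$ carries pointed equivalences to pointed magma equivalences (the $\ap{}$-respects-concatenation point you cite). The inductive step and the underlying base-case equivalence are correct as stated.

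The genuine gap is in the base case, at the very step you flag as the main obstacle, and your proposed resolution does not close it. The witness you must produce is an equality $\phi(\ct{p_1}{p_2}) = \phi(p_1) \cdot \phi(p_2)$ in the type $Y \pto \loopspacesym Z$, i.e., an equality of \emph{pointed} maps: the propositional truncation in the definition of magma map spares you coherence \emph{among} such witnesses, but it does not remove the pointedness component of this one. Your function-extensionality lemma supplies only the underlying homotopy $\widehat{\ct{p_1}{p_2}} \sim \hat p_1 \cdot \hat p_2$; the remaining coherence at $y_0$ — matching the transport-derived pointedness witness of $\phi(\ct{p_1}{p_2})$ against the idempotence-derived witness of the pointwise product from \cref{rmk:magmas} — cannot be proved by path induction on $p_1$ and $p_2$, because they are loops with both endpoints fixed at the constant map. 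This is precisely why the paper's proof generalizes the equivalence away from the basepoint: it defines, for arbitrary pointed maps $f, g : Y \pto Z$, an equivalence $\phi : (f = g) \simeq \sm{K : f \sim g} \bigl(K(y_0) = f_0 \cdot g_0^{-1}\bigr)$, with the right-hand side tailored so that it specializes to $Y \pto \loopspacesym Z$ at the constant maps; compatibility with concatenation then reduces by path induction on the (now free-endpoint) paths to $\phi(\refl{}) = \phi(\refl{}) \cdot \phi(\refl{})$, and a further path induction on $f_0$ finishes it. The same generalization device is needed for naturality, which you assert via "each ingredient is natural": your reduction of the naturality squares to equalities of underlying pointed maps is legitimate (that is exactly what the weak-magma convention buys), but proving those equalities again requires inducting on the pointedness path of $h$, comparing with a whiskering square, and generalizing from loops $c' = c'$ to arbitrary $f = g$ before path induction applies. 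So the architecture is right, but the missing idea is the generalization of $\phi$ to arbitrary endpoints with the carefully chosen codomain; the truncation trick alone does not substitute for it.
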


On the right-hand-side, we are using the pointwise magma structure
described in \cref{rmk:magmas}.

\begin{proof}
    We prove this for $n = 1$, and then iterate, using that the functor
    $\loopspacesym$ sends pointed equivalences to equivalences of pointed magmas.

    In order to prove that our equivalence respects the magma structures,
    it is best to generalize:  for $f, g : Y \pto Z$ we define an equivalence
\[
    \phi : (f = g) \llra{\sim} \sum_{K : f \sim g} \, K(y_0) = f_0 \cdot g_0^{-1}.
\]
    Here $K$ is a homotopy, $y_0$ is the basepoint of $Y$,
    and $f_0$ and $g_0$ are the paths witnessing that $f$ and $g$ are pointed.
    This equivalence is a variant of the standard result that equalities of
    pointed maps are equivalent to pointed homotopies; the particular choice
    of the right-hand-side means that when $f$ and $g$ are the constant map
    $Y \pto Z$ pointed by $\refl{}$, we obtain a pointed equivalence
    \[
        \loopspacesym(Y \pto Z) \psimeq \left(Y \pto \loopspacesym Z \right).
    \]
    Our pointed homotopies can be composed, and we show that $\phi$
    sends composition of paths to composition of homotopies by first doing
    induction on the paths to reduce the goal to
\[
    \phi(\refl{}) = \phi(\refl{}) \cdot \phi(\refl{})
\]
    and then using path induction to assume that $f_0$ is $\refl{}$.
    We conclude that
    \[
        \loopspacesym(Y \pto Z) \pMsimeq \left(Y \pto \loopspacesym Z \right).
    \]

    To prove naturality in $Y$, consider a pointed map $h : Y \pto Y'$.
    We must show that the following square commutes:
    \[
    \begin{tikzcd}[column sep=3em]
      \loopspacesym(Y' \pto Z) \ar[d,"\phi"'] \ar[r,"\loopspacesym(- \circ h)"] &
      \loopspacesym(Y  \pto Z) \ar[d,"\phi"] \\
      (Y' \pto \loopspacesym Z) \ar[r,"- \circ h"'] &
      (Y  \pto \loopspacesym Z) .
    \end{tikzcd}
    \]
    By path induction, we can assume that $h$ is strictly pointed, i.e.,
    that the given path $h_0 : h(y_0) = y_0'$ is reflexivity.
    In this case, writing $c : Y \pto Z$ and $c' : Y' \pto Z$ for the constant maps,
    we have that $c' \circ h$ and $c$ are definitionally equal as pointed maps.
    Therefore, the corners and vertical maps in the required square are definitionally
    equal to those in the square
    \[
    \begin{tikzcd}[column sep=3em]
      c' = c' \ar[d,"\phi"'] \ar[r,"\ap{- \circ h}"] &
      c' \circ h = c' \circ h \ar[d,"\phi"] \\
      c' \psim c' \ar[r,"\mathsf{wh}_h"] &
      c' \circ h \psim c' \circ h  ,
    \end{tikzcd}
    \]
    where $\psim$ denotes the type of pointed homotopies defined above, and
    $\mathsf{wh}_h$ denotes prewhiskering with $h$.
    One can check that the horizontal arrows are homotopic to those
    in the required square, so it remains to show that the new square
    commutes.
    To show this, one generalizes from $c' = c'$ to $f = g$, in which
    case the commutativity follows by path induction.

    The proof of naturality in $Z$ is very similar.
    Since both naturalities have been formalized, we give no further details.
\end{proof}

\begin{lem}\label{lem:magmamap_loops_functor}
\def\lnZ{\loopspacesym^n Z}
  Let $n, m \geq 1$ and let $Y$ and $Z$ be pointed types.
  The action of $\loopspacesym^m$ on maps gives a pointed magma map
  \[
    (Y \pto \lnZ) \lra_{\pMgm} (\loopspacesym^m Y \pMto \loopspacesym^m \lnZ).
  \]
  Moreover, the forgetful maps
  \[
    (\loopspacesym^m Y \pMto \loopspacesym^m \lnZ) \lra_{\pMgm}
    (\loopspacesym^m Y \Mto \loopspacesym^m \lnZ)
  \]
  and
  \[
    (\loopspacesym^m Y \pMto \loopspacesym^m \lnZ) \lra_{\pMgm}
    (\loopspacesym^m Y \pto \loopspacesym^m \lnZ) \hspace*{1.2em} 
  \]
  are also pointed magma maps.
  In all cases, we are using the pointwise magma structure described in \cref{rmk:magmas}.
  These maps are all natural.
\end{lem}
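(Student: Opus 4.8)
The plan is to treat the three maps separately, since the two forgetful maps are essentially formal while the action of $\loopspacesym^m$ carries all the real content. Throughout write $W \defeq \loopspacesym^n Z$, so that $W$ is at least a single loop space and $\loopspacesym^m W = \loopspacesym^{m+n} Z$ is at least a double loop space; this is exactly what makes each of the four types a pointed magma under the pointwise operation inherited from the concatenation on $W$, respectively on $\loopspacesym^m W$, as in \cref{rmk:magmas}. In every case I will first check that the map is pointed, i.e.\ that it sends the constant map (the unit of the pointwise magma) to the constant map, and then that it respects the pointwise operation. Because a magma map need only \emph{merely} respect the operation, and because an equality of (pointed) magma maps is just an equality of underlying functions, it suffices in each case to produce an honest proof of the required equality and then apply the truncation.

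For the two forgetful maps I would note that they preserve the pointwise structure on the nose. The pointwise operation on $(\loopspacesym^m Y \pMto \loopspacesym^m W)$, on $(\loopspacesym^m Y \Mto \loopspacesym^m W)$, and on $(\loopspacesym^m Y \pto \loopspacesym^m W)$ is in all three cases given by $(F \cdot G)(\omega) \defeq F(\omega) \cdot_{\loopspacesym^m W} G(\omega)$, so forgetting pointedness, respectively forgetting the magma-homomorphism witness, changes neither the underlying function nor the operation. Hence each forgetful map carries $F \cdot G$ to the corresponding product definitionally and carries the constant map to the constant map, which makes it a pointed magma map with a definitional (in particular, mere) proof of operation-preservation.

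The substance of the lemma is that $\loopspacesym^m$ itself is a magma map, i.e.\ that $\loopspacesym^m(f \cdot g) = \loopspacesym^m f \cdot \loopspacesym^m g$ for all $f, g : Y \pto W$, where the left-hand product is pointwise in $W$ and the right-hand product is pointwise in $\loopspacesym^m W$. First, $\loopspacesym^m f$ really is a pointed magma map, obtained by iterating the natural map $\loopspacesym : (X \pto Y) \pto (\loopspacesym X \pMto \loopspacesym Y)$ of \cref{rmk:magmas}, and $\loopspacesym^m$ of the constant map is the constant map, so $\loopspacesym^m$ is pointed. I would prove the interchange identity by induction on $m$. Writing $f \cdot g = \mu_W \circ \langle f, g \rangle$ with $\mu_W$ the concatenation on $W$, the functoriality of $\loopspacesym$ and its preservation of binary products reduce the base case $m = 1$ to the claim that $\loopspacesym(\mu_W)$ agrees with the concatenation $\mu_{\loopspacesym W}$ on $\loopspacesym W$; this is precisely the Eckmann--Hilton interchange, available because $W$ is itself a loop space and its operation has the unit laws at the basepoint $\refl{}$. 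For the inductive step I would write $\loopspacesym^{m+1} = \loopspacesym \circ \loopspacesym^m$ and apply the inductive hypothesis and then the base case, the latter stated uniformly over all pointed domains and all targets of the form $\loopspacesym^{n'} Z$ with $n' \geq 1$.

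Finally, naturality in $Y$ and $Z$ follows from the functoriality of $\loopspacesym^m$ together with the fact that all four operations are pointwise, exactly in the style of the naturality argument in \cref{lem:homotopygroupmappingspace}: the relevant squares commute because pre- and post-composition commute with $\loopspacesym^m$ up to the functor coherences, and the forgetful maps visibly commute with these whiskerings. I expect the main obstacle to be the base case of the interchange: disentangling the pointedness paths of $f$, $g$ and $f \cdot g$ from the application of $\ap{\mu_W}$ to a pair of loops and recognising the result, after conjugation, as the Eckmann--Hilton product in $\loopspacesym W$. Everything else is bookkeeping with the pointwise magma structure.
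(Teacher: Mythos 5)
Your proposal is correct and matches the paper's proof in all essentials: the forgetful maps are handled as definitional/formal, the main content is reduced by induction on $m$ to the $m=1$ case where Eckmann--Hilton interchange in the double loop space does the work, the weak (merely truncated) magma maps are exploited to reduce equalities of magma maps to equalities of underlying pointed maps, and naturality follows from the functoriality of $\loopspacesym$. The only cosmetic difference is that the paper first replaces $Z$ by $\loopspacesym^{n-1}Z$ to assume $n=1$, whereas you keep $W \equiv \loopspacesym^n Z$ and state the base case uniformly over such targets, and the paper makes explicit that the inductive composite passes through the middle forgetful map, which your composition of the inductive hypothesis with the base case does implicitly.
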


\begin{proof}
\def\l1Z{\loopspacesym Z}
That the forgetful maps are natural pointed magma maps is straightforward,
so we focus on the first map.
By replacing $Z$ with $\loopspacesym^{n-1} Z$, we can assume that $n = 1$.
To prove that $\Omega^m$ is a natural pointed magma map, we induct on $m$.
For the inductive step, we define $\Omega^{m+1}$ to be the composite
\[
\begin{aligned}
  (Y \pto \l1Z) &\llra{\loopspacesym^m} (\loopspacesym^m Y \pMto \loopspacesym^m \l1Z) \\
  &\lra (\loopspacesym^m Y \pto \loopspacesym^m \l1Z) \\
  &\llra{\loopspacesym} (\loopspacesym^{m+1} Y \pMto \loopspacesym^{m+1} \l1Z)
\end{aligned}
\]
so that the claim follows from the inductive hypothesis, the fact that
the middle forgetful map is a natural pointed magma map, and the $m=1$ case.

It remains to prove the $m=1$ case.
It is easy to see that for $f : Y \pto \l1Z$, $\loopspacesym f$ is a
pointed magma map.
Next we must show that given $f, g : Y \pto \l1Z$,
$\loopspacesym (f \cdot g)$ and $(\loopspacesym f) \cdot (\loopspacesym g)$
are equal as pointed magma maps, where $\cdot$ denotes the pointwise operations.
Because we are using weak magma maps, it is equivalent to show that these
two maps are equal as pointed maps, or in other words that there is a
pointed homotopy
$\loopspacesym (f \cdot g) \psim (\loopspacesym f) \cdot (\loopspacesym g)$.
The underlying homotopy involves some path algebra, and ultimately follows
from the fact that horizontal and vertical composition agree in the codomain,
which is a double-loop space.
The pointedness of the homotopy follows by a simple path induction on
the paths $f(y_0) = \refl{}$ and $g(y_0) = \refl{}$, after generalizing
$f(y_0)$ and $g(y_0)$ to arbitrary loops.
The argument in this paragraph has been formalized.

The naturality of $\Omega$ follows from the fact that for pointed
maps $h$ and $k$, $\Omega(h \circ k) = \Omega(h) \circ \Omega(k)$
as pointed maps, where again we are taking advantage of the fact
that we are using weak magma maps.
\end{proof}

\begin{defn}\label{defn:sm}
    For pointed types $X$, $Y$, and $Z$ and natural numbers $n,m \geq 1$, we have maps:
    \begin{equation}\label{eq:sm}
    \begin{aligned}
        (X \pto Y \pto Z) &\lra (\loopspacesym^n X \Mto \loopspacesym^n (Y \pto Z))\\
                                  &\llra{\sim} (\loopspacesym^n X \Mto (Y \pto \loopspacesym^n Z))\\
                                  &\lra (\loopspacesym^n X \Mto (\loopspacesym^m Y \pMto \loopspacesym^m \loopspacesym^n Z))\\
                                  &\llra{\sim} (\loopspacesym^n X \Mto (\loopspacesym^m Y \pMto \loopspacesym^{n+m} Z))\\
                                  &\lra (\pi_n(X) \Gto \pi_m(Y) \Gto \pi_{n+m}(Z)).
    \end{aligned}
    \end{equation}
    These maps are natural in $X$, $Y$, and $Z$.
    The first and third arrows apply $\loopspacesym^n$ and $\loopspacesym^m$
    to morphisms, using \cref{lem:magmamap_loops_functor}.
    The second arrow is an equivalence by \cref{lem:homotopygroupmappingspace}.
    To understand the fourth arrow, write $m = k + 1$ for some $k : \N$.
    Then $\loopspacesym^k \loopspacesym^n Z = \loopspacesym^{n+k} Z$ as pointed types.
    Applying $\loopspacesym$ on the outside, we see that
    $\loopspacesym^m \loopspacesym^n Z = \loopspacesym^{n+m} Z$ as magmas.
    Since the magma structure on the set of magma maps only uses that
    the iterated loop space is \emph{merely} commutative and \emph{merely} associative,
    we can conclude that
    $(\loopspacesym^m Y \pMto \loopspacesym^m \loopspacesym^n Z) =
     (\loopspacesym^m Y \pMto \loopspacesym^{n+m} Z)$
    as magmas.
    From this we deduce the required equivalence.
    The fifth arrow applies $0$-truncation on the inside and then on the outside.
    Let
    \[
      \sma : (X \pto Y \pto Z) \lra (\pi_n(X) \Gto \pi_m(Y) \Gto \pi_{n+m}(Z))
    \]
    denote the composite.
\end{defn}

The map $\sma$ corresponds to the following construction in topology,
which uses the smash product from the next section.
Given a map $f : X \pto Y \pto Z$ and homotopy classes $\alpha : \pi_n(X)$
and $\beta : \pi_m(Y)$, one can smash representatives of the homotopy classes
together to get an element $\alpha \wedge \beta : \pi_{n+m}(X \wedge Y)$.
The adjoint $X \wedge Y \pto Z$ of $f$ then induces a map taking this
to an element of $\pi_{n+m}(Z)$ which (up to sign) is $\sma(f, \alpha, \beta)$.
This correspondence motivates the name.

Since we'll use it several times, we quote the following result from~\cite{BuchholtzDoornRijke}.

\begin{lem}[{\cite[Corollary~4.3]{BuchholtzDoornRijke}}]\label{lem:mappingspacetrunc}
    Let $m \geq 0$ and $n \geq -1$. If $Y$ is a pointed, $(m-1)$-connected type and $Z$ is a pointed, $(n+m)$-truncated type,
    then the type $Y \pto Z$ is $n$-truncated.\qed
\end{lem}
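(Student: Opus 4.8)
The plan is to realize $Y \pto Z$ as a fibre of an evaluation map and then apply the orthogonality between connected maps and truncated type families. Writing $y_0$ and $z_0$ for the basepoints, the pointed mapping space is by definition the fibre
\[
  Y \pto Z \;\simeq\; \fib{z_0}{\mathsf{ev}_{y_0}},
\]
where $\mathsf{ev}_{y_0} : (Y \to Z) \to Z$ sends $f$ to $f(y_0)$. Up to the natural equivalence $(\unit \to Z) \simeq Z$, this evaluation map is exactly precomposition $(- \circ c) : (Y \to Z) \to (\unit \to Z)$ with the basepoint inclusion $c : \unit \to Y$, $c(\star) \defeq y_0$. Since fibres are preserved by this identification, it suffices to control the truncation level of the fibres of $(- \circ c)$.

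First I would compute how connected $c$ is. Its fibre over $y : Y$ is $y_0 = y$, and the path types of an $(m-1)$-connected type are $(m-2)$-connected; hence $c$ is $(m-2)$-connected (this is vacuous when $m = 0$, where $m-2 = -2$). Now I invoke the truncated refinement of the connectedness elimination principle \cite[Theorem~7.5.7]{hottbook}: if $i : A \to B$ is $k$-connected and $P : B \to \UU$ is a family of $l$-truncated types, then precomposition $\prd{b : B} P(b) \to \prd{a : A} P(i(a))$ is an $(l - k - 2)$-truncated map. Applying this with $i \defeq c$ (so $k = m-2$) and the constant family $P \defeq Z$ (so $l = n + m$, using the hypothesis on $Z$) shows that $(- \circ c)$, and therefore $\mathsf{ev}_{y_0}$, is an $((n+m) - (m-2) - 2) = n$-truncated map. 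A map is $n$-truncated exactly when all of its fibres are $n$-truncated, so in particular $Y \pto Z \simeq \fib{z_0}{\mathsf{ev}_{y_0}}$ is $n$-truncated, as required. This single argument is uniform in $m \geq 0$ and $n \geq -1$, with no case splits.

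The one nontrivial ingredient, and the main obstacle, is the truncated refinement of \cite[Theorem~7.5.7]{hottbook} together with getting its index bookkeeping exactly right; the book states only the equivalence case ($l = k$), so I would either locate the graded version in the literature or reprove it by induction on $l - k$, starting from the equivalence case. An alternative more in keeping with the machinery of this section is to induct on $m$: the base case $m = 0$ is immediate, since $Y \pto Z$ is then a fibre of a map between $n$-truncated types, and for the inductive step one would use the natural equivalence $\loopspacesym(Y \pto Z) \pMsimeq (Y \pto \loopspacesym Z)$ of \cref{lem:homotopygroupmappingspace} together with the characterization that a type is $n$-truncated when all of its loop spaces are $(n-1)$-truncated. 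The drawback of this route is that the loop-space equivalence is most naturally phrased at the constant basepoint, whereas $Y \pto Z$ need not be connected, so one must either generalize to loop spaces at arbitrary basepoints or to spaces of pointed sections of the bundle $y \mapsto \loopspacesym(Z, g(y))$; the orthogonality argument avoids this bookkeeping entirely by treating all fibres at once.
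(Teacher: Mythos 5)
Correct. The paper itself gives no proof of this lemma, importing it directly from \cite[Corollary~4.3]{BuchholtzDoornRijke}, and your argument is essentially the proof found in that source: realize $Y \pto Z$ as the fibre of evaluation at the basepoint, observe that the basepoint inclusion $\unit \to Y$ is $(m-2)$-connected, and apply the graded refinement of \cite[Theorem~7.5.7]{hottbook} (which is the relevant lemma in \cite{BuchholtzDoornRijke}, and which your induction on $l-k$ via identity types of $\Pi$-types does correctly reprove), with the index bookkeeping $(n+m)-(m-2)-2 = n$ checking out, including the edge cases $m=0$ and $n=-1$.
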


The last result in this section plays an important role in our proof,
and can be thought of as a generalization of \cref{thm:groupsandspaces}
to functions with two arguments.

\begin{thm}\label{thm:smashisequiv}
    Let $n, m \geq 1$.
    If $X$ is a pointed $(n-1)$-connected type, $Y$ is a pointed $(m-1)$-connected type,
    and $Z$ is a pointed $(n+m)$-truncated type, then the map $\sma$ is an equivalence.
\end{thm}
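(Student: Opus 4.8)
The plan is to prove that $\sma$ is an equivalence by showing that each of the five maps composing it in \cref{eq:sm} is an equivalence under the stated hypotheses. Two of them require no further work: as explained in \cref{defn:sm}, the second map is postcomposition by the pointed magma equivalence of \cref{lem:homotopygroupmappingspace}, and the fourth is induced by the identification $\loopspacesym^m \loopspacesym^n Z = \loopspacesym^{n+m} Z$ of magmas, so both are equivalences with no connectivity or truncation assumptions. The work is therefore concentrated in the first, third, and fifth maps, and I would treat them in turn.

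For the first map, I would invoke \cref{prop:connectedtotruncated} with the pointed, $(n-1)$-connected type $X$ and the codomain $Y \pto Z$. This requires $Y \pto Z$ to be $n$-truncated, which is exactly \cref{lem:mappingspacetrunc}: since $Y$ is $(m-1)$-connected and $Z$ is $(n+m)$-truncated, the mapping space $Y \pto Z$ is $n$-truncated. For the third map, observe that it is postcomposition of $\loopspacesym^n X \Mto (-)$ with the map $\loopspacesym^m : (Y \pto \loopspacesym^n Z) \to (\loopspacesym^m Y \pMto \loopspacesym^m \loopspacesym^n Z)$ supplied by \cref{lem:magmamap_loops_functor}. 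Because $Z$ is $(n+m)$-truncated, the type $\loopspacesym^n Z$ is $m$-truncated, and $Y$ is $(m-1)$-connected, so \cref{prop:connectedtotruncated} shows this instance of $\loopspacesym^m$ is a pointed magma equivalence; postcomposition then preserves equivalences on the types of magma maps.

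For the fifth map, the key observation is that, since $Z$ is $(n+m)$-truncated, the iterated loop space $\loopspacesym^{n+m} Z$ is $0$-truncated, hence already a set-level group equal to $\pi_{n+m}(Z)$. The inside truncation then uses the universal property of $0$-truncation together with \cref{rmk:magmas}: a pointed magma map from $\loopspacesym^m Y$ into the set-level group $\loopspacesym^{n+m} Z$ factors uniquely through the projection $\loopspacesym^m Y \to \pi_m(Y)$, yielding an equivalence $(\loopspacesym^m Y \pMto \loopspacesym^{n+m} Z) \simeq (\pi_m(Y) \Gto \pi_{n+m}(Z))$. As the right-hand side is again a set-level group (abelian, since $n+m \geq 2$), the outside truncation similarly gives $(\loopspacesym^n X \Mto (\pi_m(Y) \Gto \pi_{n+m}(Z))) \simeq (\pi_n(X) \Gto \pi_m(Y) \Gto \pi_{n+m}(Z))$, once more by the universal property of $0$-truncation and \cref{rmk:magmas}.

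I expect the fifth map to be the main obstacle, since it is the only step where one must interleave the universal property of $0$-truncation with the magma structures and confirm that the identification of magma maps with iterated group homomorphisms matches the stated target; the verification that $\pi_m(Y) \Gto \pi_{n+m}(Z)$ is a set (so that the outer universal property applies) and the bookkeeping of truncation and connectivity levels for iterated loop spaces are the places where care is needed. The first and third maps, by contrast, reduce cleanly to \cref{prop:connectedtotruncated} and \cref{lem:mappingspacetrunc}. Once all five maps are shown to be equivalences, composing them finishes the proof.
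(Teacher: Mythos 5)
Your proposal matches the paper's proof essentially step for step: the paper likewise notes the second and fourth arrows are equivalences by construction, handles the first arrow via \cref{lem:mappingspacetrunc} together with \cref{prop:connectedtotruncated}, the third via \cref{prop:connectedtotruncated}, and the fifth ``by the same methods as in the proof of \cref{prop:connectedtotruncated}, using that $\loopspacesym^{n+m} Z$ is a set.'' Your expanded treatment of the fifth arrow (factoring through the $0$-truncations using \cref{rmk:magmas} and checking that $\pi_m(Y) \Gto \pi_{n+m}(Z)$ is a set-level abelian group) is a correct unpacking of exactly what the paper leaves implicit.
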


\begin{proof}
    The first arrow in \cref{eq:sm} is an equivalence by
    \cref{lem:mappingspacetrunc} and \cref{prop:connectedtotruncated}.
    The third arrow is an equivalence by \cref{prop:connectedtotruncated}.
    To show that the fifth arrow is an equivalence, one uses the same methods
    as in the proof of \cref{prop:connectedtotruncated}, using that
    $\loopspacesym^{n+m} Z$ is a set.
\end{proof}

\subsection{The connectivity of smash products}\label{ss:connectivity}

We recall some basic facts about smash products, and then prove a
result about their connectivity.

\begin{defn}
    For pointed types $X$ and $Y$, the \define{smash product} $X \wedge Y$
    is defined to be the higher inductive type with constructors:
    \begin{itemize}
    \item $\smin : X \times Y \to \XY$.
    \item $\ptl : \XY$.
    \item $\ptr : \XY$.
    \item $\gluel : \prd{y:Y} \smin(x_0,y) = \ptl$.
    \item $\gluer : \prd{x:X} \smin(x,y_0) = \ptr$.
    \end{itemize}
    The smash product is pointed by $\smin(x_0, y_0)$.
    It has the expected induction principle.
\end{defn}

It is straightforward to see that the smash product is a functor.
That is, given pointed maps $f : X \pto X'$ and $g : Y \pto Y'$ between pointed types,
there is a pointed map $f \wedge g : X \wedge Y \pto X' \wedge Y'$
defined by induction on the smash product in the evident way,
and this operation respects identity maps and composition.

Given pointed types $X$ and $Y$, the constructors of the smash product $X\wedge Y$
combine to give a map $X \to_\sbt (Y \to_\sbt X \wedge Y)$, which we now describe.

\begin{defn}\label{defn:sminsbt}
    Let $X,Y : \UU_\sbt$. Currying the constructor $\smin$, we get a map
    $X \to (Y \to X \wedge Y)$. Using the constructor $\gluer$ twice, this map
    lifts to a map $X \to (Y \to_\sbt X \wedge Y)$. Similarly, using $\gluel$,
    this last map lifts to a map $\sminsbt : X \to_\sbt (Y \to_\sbt X \wedge Y)$.
\end{defn}

The following adjunction between pointed maps and smash products is fundamental to our work.

\begin{lem}[{\cite[Theorem~4.3.28]{doornthesis}}]\label{lem:smashhomadjunction}
    Let $X$, $Y$, and $Z$ be pointed types.
    The map
    \[
        (X \wedge Y \to_\sbt Z) \lra_\sbt (X \to_\sbt (Y \to_\sbt Z)),
    \]
    induced by precomposition with $\smin_{\sbt}$ is a pointed equivalence,
    natural in $X$, $Y$, and $Z$.
    Here, we are interpreting $(- \wedge - \to_\sbt -)$ and $(- \to_\sbt (- \to_\sbt -))$
    as $1$-coherent functors $\UUcat_\sbt^\op \times \UUcat_\sbt^\op \times \UUcat_\sbt \to \UUcat_\sbt$.
    \qed
\end{lem}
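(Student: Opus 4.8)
The plan is to unfold both sides of the claimed adjunction into a common explicit description, and then to recognise that the comparison map is precisely precomposition with $\sminsbt$. First I would compute the left-hand side using the recursion principle of the smash product. A map $g : \XY \to Z$ is the same as a tuple consisting of $k : X \times Y \to Z$ (the image of $\smin$), points $a, b : Z$ (the images of $\ptl$ and $\ptr$), and families $G_l : \prod_{y:Y} k(x_0,y) = a$ and $G_r : \prod_{x:X} k(x,y_0) = b$ (the images of $\gluel$ and $\gluer$). Pointedness of $g$ adds a path $p : k(x_0,y_0) = z_0$, since the basepoint of $\XY$ is $\smin(x_0,y_0)$. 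The points $a$ and $b$ can be eliminated by a change of variables: combining $p$ with the corner values $G_r(x_0)$ and $G_l(y_0)$ produces identifications $b = z_0$ and $a = z_0$, along which singleton contraction removes $a$ and $b$ while translating $G_r$ and $G_l$ to land at $z_0$. Concretely I would set $u_x := G_r(x)\cdot G_r(x_0)^{-1}\cdot p$ and $v_y := G_l(y)\cdot G_l(y_0)^{-1}\cdot p$, so that $u(x_0)$ and $v(y_0)$ each come with a canonical identification to $p$ and the corner coheres automatically. This exhibits $(\XY \pto Z)$ as equivalent to
\[
  \sum_{k : X \to Y \to Z}\ \sum_{u : \prod_{x:X} k(x)(y_0)=z_0}\ \sum_{v : \prod_{y:Y} k(x_0)(y)=z_0}\ \bigl(u(x_0) = v(y_0)\bigr),
\]
a map together with a right pointing, a left pointing, and a corner coherence.

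Next I would unfold the right-hand side. A pointed map $h : X \pto (Y \pto Z)$ consists of a function $h : X \to (Y \pto Z)$ together with a proof that $h(x_0)$ equals the constant pointed map. Distributing the $\Sigma$ in the codomain over the $\Pi$ (via function extensionality), the underlying data of $h$ is a map $k : X \to Y \to Z$ together with a family $u : \prod_{x:X} k(x)(y_0)=z_0$ witnessing that each $h(x)$ is pointed. The outer pointedness $h(x_0) = \mathsf{const}$ then unfolds, using function extensionality and the characterisation of equalities of pointed maps (as in the proof of \cref{lem:homotopygroupmappingspace}), to a homotopy $v : \prod_{y:Y} k(x_0)(y)=z_0$ together with a path relating $v(y_0)$ to $u(x_0)$. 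This is visibly the same common type displayed above. Tracing the constructions shows that the composite equivalence is exactly precomposition with $\sminsbt$: since $\sminsbt$ is assembled from $\smin$, $\gluer$ and $\gluel$, precomposing a pointed $g$ reads off precisely the $\smin$-component $k$ and the pointing data.

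It remains to check that the equivalence is pointed and natural. Pointedness is immediate: the constant pointed map $\XY \pto Z$ has $k = \mathsf{const}$, which is the basepoint on the right. Naturality in $Z$ is postcomposition and is routine. Naturality in $X$ and $Y$, where the functors are contravariant, reduces to the compatibility of precomposition with $\sminsbt$ with the functorial action of $\wedge$ on the constructors, i.e.\ to the defining equations of $f \wedge g$ applied to $\smin$, $\gluel$ and $\gluer$, which hold by a short induction on the smash product. The main obstacle is not conceptual but bookkeeping: carrying the point-constructor contraction through coherently (the corner equality $u(x_0)=v(y_0)$ and its interaction with $p$), and then assembling the three-variable naturality in the presence of contravariance while keeping every glue and pointing path aligned. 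A more structural alternative would present $\XY$ as the pushout $\unit \leftarrow X \vee Y \to X \times Y$ and derive the adjunction from the universal property of pointed pushouts, trading explicit path algebra for the machinery of pointed colimits. Either way the real work is the coherence management, which is presumably why we quote the result from \cite{doornthesis}.
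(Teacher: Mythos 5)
The paper does not prove this lemma: it is quoted from \cite{doornthesis} (Theorem~4.3.28), where it was also formalized, so there is no in-paper argument to compare against, and the right question is whether your reconstruction is sound. It is, and it follows essentially the strategy of the cited source: unfold both sides and match them. Your unfolding is correct on both ends --- the left side by smash induction into $(k, a, b, G_l, G_r)$ plus the pointing $p$, the right side by distributing $\Sigma$ over $\Pi$ and the characterization of equalities of pointed maps (the $\phi$ of \cref{lem:homotopygroupmappingspace}, with $g$ the constant map, giving the coherence $v(y_0) = u(x_0)$) --- and both do reduce to the common type $\sum_{k}\sum_{u}\sum_{v}\,\bigl(u(x_0)=v(y_0)\bigr)$. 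Your explicit formulas $u_x := G_r(x)\cdot G_r(x_0)^{-1}\cdot p$ and $v_y := G_l(y)\cdot G_l(y_0)^{-1}\cdot p$ are exactly what precomposition by $\sminsbt$ produces, since by \cref{defn:sminsbt} the inner and outer pointings of $\sminsbt$ are $\gluer(x)\cdot\gluer(x_0)^{-1}$ and its $\gluel$-analogue, so the comparison map really is the stated one. Two cautions on wording: the contraction eliminating $a$ and $b$ leaves one residual path datum (the corner), because $u(x_0)$ and $v(y_0)$ are each identified with $p$ only propositionally, via inverse-cancellation paths --- your count of degrees of freedom is right, but ``the corner coheres automatically'' undersells the path algebra; similarly, pointedness of the equivalence is a short computation (the pointing family of the constant map must be identified with the reflexivity pointing), not a definitional triviality. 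Your diagnosis of where the substance lies is accurate: the three-variable naturality as a $1$-coherent natural transformation, with contravariance in $X$ and $Y$, requires the \emph{pointed} naturality of $\sminsbt$, hence the two-dimensional computation rules of $f \wedge g$ on $\gluel$ and $\gluer$ and their interaction with all the pointing paths; this coherence overhead is exactly why the paper imports the formalized statement rather than reproving it.
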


Note that, by construction, $\sminsbt : X \to_{\sbt} Y \to_{\sbt} X \wedge Y$ is the adjunct of the
identity map $X \wedge Y \to_{\sbt} X \wedge Y$.

In the form stated here, \cref{lem:smashhomadjunction} has been formalized by~\cite{doornthesis}.
A stronger statement, which roughly involves regarding the category $\UUcat_\sbt$
as being \emph{enriched} over $\UUcat_\sbt$, has not yet been proven,
but we do not use this stronger form.

We now give a bound on the connectivity of smash products,
proving the first part of \cref{thmS} from the Introduction.

\begin{cor}\label{cor:smashconnected}
    Let $n,m \geq 0$, let $X$ be a pointed, $(n-1)$-connected type, and let $Y$ be a pointed, $(m-1)$-connected type.
    Then $X \wedge Y$ is $(n+m-1)$-connected.
\end{cor}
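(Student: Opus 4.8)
The plan is to deduce the connectivity bound directly from the smash--hom adjunction (\cref{lem:smashhomadjunction}), turning a statement about $X \wedge Y$ into one about iterated pointed mapping spaces that is governed by \cref{lem:mappingspacetrunc}. The bridge is the standard characterization of connectivity in terms of truncated targets (see~\cite{hottbook}): for $k \geq -1$, a pointed type $W$ is $k$-connected if and only if the pointed mapping space $(W \pto Z)$ is contractible for every pointed, $k$-truncated type $Z$. I would prove that $X \wedge Y$ is $(n+m-1)$-connected by verifying exactly this mapping-space condition with $k = n+m-1$.

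So I would fix an arbitrary pointed, $(n+m-1)$-truncated type $Z$ and show that $(X \wedge Y \pto Z)$ is contractible. First, \cref{lem:smashhomadjunction} supplies an equivalence
\[
  (X \wedge Y \pto Z) \simeq (X \pto (Y \pto Z)),
\]
reducing the problem to the contractibility of the right-hand side. Next, rewriting $n+m-1 = (n-1)+m$, I apply \cref{lem:mappingspacetrunc} with its ``$m$'' taken to be $m$ and its ``$n$'' taken to be $n-1$ (which satisfies $n-1 \geq -1$ since $n \geq 0$): as $Y$ is $(m-1)$-connected and $Z$ is $((n-1)+m)$-truncated, the type $W \defeq (Y \pto Z)$ is $(n-1)$-truncated. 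Finally, $X$ is $(n-1)$-connected and $W$ is $(n-1)$-truncated, so the forward direction of the characterization gives that $(X \pto W)$ is contractible. As $Z$ ranged over all pointed $(n+m-1)$-truncated types, the converse direction of the characterization then yields that $X \wedge Y$ is $(n+m-1)$-connected.

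Since the adjunction does the real work, there is no deep obstacle here; the only points needing care are the index shift in the application of \cref{lem:mappingspacetrunc} and the degenerate cases $n = 0$ or $m = 0$, which the same two lemmas handle uniformly. For instance, when $n = 0$ the hypothesis ``$(n-1)$-connected'' on $X$ is automatic and ``$(n-1)$-truncated'' means a proposition, so $W = (Y \pto Z)$ is a pointed proposition, hence contractible, and $(X \pto W)$ is contractible at once; the case $m = 0$ is symmetric. If one wishes to avoid invoking the connectivity characterization as a black box, its forward direction reduces (via the universal property of $n$-connected maps in~\cite{hottbook}, applied to $X \pto \mathbf{1}$) to the fact that every unpointed map from an $(n-1)$-connected type into an $(n-1)$-type is uniquely constant, while the converse follows by taking $Z \defeq \ttrunc{n+m-1}{X \wedge Y}$: the universal property of truncation identifies $(Z \pto Z)$ with $(X \wedge Y \pto Z)$, which is contractible by the main argument, and a pointed $(n+m-1)$-type with contractible pointed endomorphism type is itself contractible.
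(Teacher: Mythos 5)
Your proposal is correct and is essentially the paper's own argument: both reduce, via \cref{lem:smashhomadjunction}, the contractibility of $(X \wedge Y \pto Z)$ for an arbitrary pointed $(n+m-1)$-truncated $Z$ to \cref{lem:mappingspacetrunc}, and both recover the connectivity claim by specializing $Z$ to $\ttrunc{n+m-1}{X \wedge Y}$ and observing the truncation map is nullhomotopic. The only cosmetic difference is that where you invoke the forward direction of the connectivity characterization to see that $(X \pto (Y \pto Z))$ is contractible, the paper simply applies \cref{lem:mappingspacetrunc} a second time with its $n = -1$ and notes that an inhabited proposition is contractible---the same fact.
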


\begin{proof}
    It is enough to show that the truncation map $X \wedge Y \to \ttrunc{n+m-1}{X \wedge Y}$ is nullhomotopic.
    Since the truncation map is pointed, this follows from the following
    more general fact: for any pointed, $(n+m-1)$-truncated type $Z$, the type $X \wedge Y \pto Z$ is contractible.
    Indeed, by \cref{lem:smashhomadjunction},
    we have $(X \wedge Y \pto Z) \simeq (X \pto Y \pto Z)$.
    By \cref{lem:mappingspacetrunc}, the type $Y \pto Z$ is $(n-1)$-truncated.
    Therefore, using \cref{lem:mappingspacetrunc} again, we see that the type $X \pto Y \pto Z$
    is $(-1)$-truncated, and thus contractible, since any pointed mapping space is inhabited.
\end{proof}

\subsection{Abelianization}\label{ss:abelianization}

In this section, we introduce the notion of abelianization, and give an
efficient construction of the abelianization of a group.

\begin{defn}
    Given a group $G$, an \define{abelianization} of $G$ consists of an abelian group $A$ together with
    a homomorphism $\eta : G \Gto A$, initial among homomorphisms to abelian groups.
    In other words, for each abelian group $B$ and homomorphism $h : G \Gto B$,
    the type $\sm{f : A \to B} h = f \circ \eta$ is contractible.
\end{defn}

Since the type of abelianizations of a given group is a mere proposition,
we abuse notation and denote any such abelianization by $G \to G^{ab}$.

\begin{rmk}
The existence of abelianizations can be proved in several different ways.
One could mimic the classical definition, describing $G^{ab}$ as the quotient
of $G$ by the subgroup generated by commutators, but this is awkward to work
with constructively.

A second method that clearly works is to define $G^{ab}$ as a higher
inductive type with a point constructor $\eta : G \to G^{ab}$,
a point constructor giving $G^{ab}$ an identity element,
recursive point constructors giving addition and inverses in $G^{ab}$,
recursive path constructors showing that the group laws hold
and that the operation is abelian,
a path constructor showing that $\eta$ is a homomorphism,
and a recursive path constructor forcing $G^{ab}$ to be a set.
While there is no doubt that this will work,
it is difficult to use in practice because of the number of constructors
and the fact that many of them are recursive.

A much simpler construction is as the higher inductive type with the
following constructors:
\begin{itemize}
\item $\eta : G \to G^{ab}$.
\item $\displaystyle \mathsf{comm} : \prod_{a, b, c : G} \, \eta (a \cdot (b \cdot c)) = \eta (a \cdot (c \cdot b))$.
\item $\displaystyle \mathsf{isset} : \prod_{x, y : G^{ab}} \, \prod_{p, q : x = y} \, p = q$.
\end{itemize}
Equivalently, this is the $0$-truncation of the coequalizer of the two
obvious maps $G \times G \times G \to G$.
Using either description, it is straightforward to show that $G^{ab}$ has a
unique group structure making $\eta$ a group homomorphism, that this group
structure is abelian, and that $\eta$ satisfies the universal property.
We don't give further details here,
since this has been formalized by Ali Caglayan in the HoTT library~\cite{HoTTCoq}.
\end{rmk}

Given a group homomorphism $f : G \Gto H$, there is a unique group
homomorphism $f^{ab} : G^{ab} \Gto H^{ab}$ making the square
\[
\begin{tikzcd}
  G \ar[d,"\eta",swap] \ar[r,"f"] & H \ar[d,"\eta"] \\
  G^{ab} \ar[r,"f^{ab}"] & H^{ab}
\end{tikzcd}
\]
commute.  This makes abelianization into a functor and $\eta$
into a natural transformation.

%
%

\subsection{Tensor products}\label{ss:tensors}
In this section, we define tensor products and use them to complete
the proof of \cref{thmS}.

Recall that for a group $G$ and an abelian group $H$, the set $G \Gto H$ is an abelian group. The group operation is given by
$(\phi + \psi)(g) \defeq \phi(g) + \psi(g)$, and the inverse by $(-\psi)(g) \defeq -\psi(g)$, along with the natural proofs that these are homomorphisms.

\begin{defn}\label{defn:tensor-product}
    Given abelian groups $A$ and $B$, a \define{tensor product} of $A$ and $B$ consists of an abelian group
    $T$ together with a map $t : A \Gto B \Gto T$ such that for any abelian group $C$ the map
    \[
        t^* : (T \Gto C) \lra (A \Gto B \Gto C)
    \]
    given by composition with $t$ is an equivalence.
\end{defn}

One can show in a straightforward way that tensor products exist, although
we don't need this, and in fact the existence follows from \cref{thm:smashistensor}.
Moreover, the type of tensor products of a given pair of abelian groups is a mere proposition.
We denote any such tensor product by $A \otimes B$.
Given $a : A$, and $b : B$, we form the \define{elementary tensor} $a \otimes b : A \otimes B$
as $a \otimes b \defeq t(a, b)$.

\begin{eg}\label{tensoringwithZ}
    Let $A : \Ab$. Then $A \simeq A \otimes \Z$, and the isomorphism is given by mapping
    $a : A$ to $a \otimes 1$.
    This follows from the fact that $\Z$ represents the identity, that is,
    $(\Z \to_\Grp C) \simeq_\Grp C$ for any $C : \Ab$, where the isomorphism
    is given by mapping $f : \Z \to_\Grp C$ to $f(1)$.
\end{eg}

\begin{lem}\label{equalonelementarytensors}
    Let $A,B,C : \Ab$, and $\phi,\psi : A \otimes B \to_{\Grp} C$.
    If for every $a : A$ and $b : B$ we have $\phi(a\otimes b) = \psi(a \otimes b)$,
    then $\phi = \psi$.
\end{lem}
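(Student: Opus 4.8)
The plan is to deduce this directly from the universal property of the tensor product (\cref{defn:tensor-product}). Write $T \defeq A \otimes B$ and let $t : A \Gto B \Gto T$ be the universal bilinear map, so that $a \otimes b \equiv t(a,b)$. Unfolding the composites $t^*\phi$ and $t^*\psi$ as elements of $A \Gto B \Gto C$, we have $(t^*\phi)(a)(b) \equiv \phi(t(a,b)) \equiv \phi(a \otimes b)$ for all $a : A$ and $b : B$, and likewise for $\psi$. Thus the hypothesis is precisely the statement that $t^*\phi$ and $t^*\psi$ agree on all such inputs.

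First I would show $t^*\phi = t^*\psi$ as elements of $A \Gto B \Gto C$. Since $B \Gto C$ is an abelian group, its underlying type is a set, and equality of group homomorphisms between sets is equivalent to equality of the underlying functions (being a homomorphism is a mere proposition). Hence, by function extensionality applied twice, it suffices to check equality pointwise: for each $a : A$ and $b : B$, the required equality $(t^*\phi)(a)(b) = (t^*\psi)(a)(b)$ unfolds to $\phi(a \otimes b) = \psi(a \otimes b)$, which is exactly the hypothesis.

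Finally, \cref{defn:tensor-product} asserts that $t^* : (T \Gto C) \to (A \Gto B \Gto C)$ is an equivalence. An equivalence is in particular an embedding, so $\ap{t^*}$ is an equivalence on path spaces; from $t^*\phi = t^*\psi$ we therefore obtain $\phi = \psi$, as required.

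There is no serious obstacle here: the only bookkeeping lies in unfolding $t^*$ and identifying $t(a,b)$ with the elementary tensor $a \otimes b$. The lemma is essentially just the injectivity half of the universal property, packaged so that later arguments can verify equality of maps out of $A \otimes B$ by checking only on elementary tensors.
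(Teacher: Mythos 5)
Your proposal is correct and follows essentially the same route as the paper's proof: use the hypothesis together with function extensionality (and the fact that being a homomorphism is a mere proposition) to conclude $t^*(\phi) = t^*(\psi)$, then invoke the fact that $t^*$ is an equivalence to deduce $\phi = \psi$. The extra details you spell out—unfolding $t^*$, reducing equality of homomorphisms to equality of underlying maps, and passing through $\ap{t^*}$—are exactly the bookkeeping the paper leaves implicit.
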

\begin{proof}
    By construction, we have $\phi(a\otimes b) = t^*(\phi)(a,b)$ and $\psi(a\otimes b) = t^*(\psi)(a,b)$.
    By assumption and function extensionality, we have $t^*(\phi) = t^*(\psi)$, and
    since $t^*$ is an equivalence, we deduce that $\phi = \psi$.
\end{proof}


A key step towards proving the Hurewicz theorem is constructing a map
$\pi_n(X)^{ab} \otimes \pi_m(Y)^{ab} \Gto \pi_{n+m}(X \wedge Y)$
natural in the pointed types $X$ and $Y$, and proving that this map is an equivalence
under connectivity assumptions on $X$ and $Y$.
Equivalently, we are looking for a map
$\pi_n(X)^{ab} \Gto \pi_m(Y)^{ab} \Gto \pi_{n+m}(X \wedge Y)$
that is a tensor product under these assumptions.

In order to do this, observe that, for $G$ and $H$ groups and $A$ an abelian group, we have an equivalence
\[
    (G^{ab} \Gto H^{ab} \Gto A) \xrightarrow{\sim} (G \Gto H \Gto A),
\]
given by precomposition with the corresponding abelianization maps.
Applying the $\sma$ map from \cref{defn:sm}
to the map $\sminsbt : X \to_\sbt Y \to_\sbt X \wedge Y$ from \cref{defn:sminsbt} and
using the above observation, we get a natural map
\[
    t_{X,Y} : \pi_n(X)^{ab} \Gto \pi_m(Y)^{ab} \Gto \pi_{n+m}(X \wedge Y).
\]

\begin{thm}\label{thm:smashistensor}
    Let $n, m \geq 1$, let $X$ be a pointed, $(n-1)$-connected type, and let $Y$ be a pointed, $(m-1)$-connected type.
    Then the map $t_{X,Y}$ exhibits $\pi_{n+m}(X \wedge Y)$ as the tensor product
    of $\pi_n(X)^{ab}$ and $\pi_m(Y)^{ab}$.
\end{thm}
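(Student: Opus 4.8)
The plan is to verify the universal property of the tensor product directly. Fixing an arbitrary abelian group $C$, I would show that the map
\[
  t_{X,Y}^* : (\pi_{n+m}(X \wedge Y) \Gto C) \lra (\pi_n(X)^{ab} \Gto \pi_m(Y)^{ab} \Gto C)
\]
given by precomposition with $t_{X,Y}$ is an equivalence. The key device is to represent $C$ by the Eilenberg--Mac Lane space $K(C,n+m)$, which is $(n+m)$-truncated and satisfies $\pi_{n+m}(K(C,n+m)) \cong C$. Note that $\pi_{n+m}(X \wedge Y)$ is abelian since $n+m \geq 2$, so the codomain of $t_{X,Y}$ is legitimate.

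First I would assemble two equivalences out of the mapping space $(X \wedge Y \pto K(C,n+m))$. On one side, the smash--hom adjunction (\cref{lem:smashhomadjunction}) gives $(X \wedge Y \pto K(C,n+m)) \simeq (X \pto Y \pto K(C,n+m))$, and since $K(C,n+m)$ is $(n+m)$-truncated, \cref{thm:smashisequiv} shows $\sma$ is an equivalence $(X \pto Y \pto K(C,n+m)) \simeq (\pi_n(X) \Gto \pi_m(Y) \Gto \pi_{n+m}(K(C,n+m)))$; composing and using $\pi_{n+m}(K(C,n+m)) \cong C$ yields an equivalence onto $(\pi_n(X) \Gto \pi_m(Y) \Gto C)$. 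On the other side, $X \wedge Y$ is $(n+m-1)$-connected by \cref{cor:smashconnected}, so \cref{prop:connectedtotruncated} makes $\loopspacesym^{n+m}$ an equivalence $(X \wedge Y \pto K(C,n+m)) \simeq (\loopspacesym^{n+m}(X \wedge Y) \Mto \loopspacesym^{n+m} K(C,n+m))$. Since $\loopspacesym^{n+m} K(C,n+m)$ is the discrete group $C$ and $C$ is a set, magma maps out of $\loopspacesym^{n+m}(X \wedge Y)$ factor uniquely through its $0$-truncation $\pi_{n+m}(X \wedge Y)$, by the same $0$-connected-map argument used inside \cref{prop:connectedtotruncated}. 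This gives $(X \wedge Y \pto K(C,n+m)) \simeq (\pi_{n+m}(X \wedge Y) \Gto C)$, and unwinding the identifications shows it sends a pointed map $F$ to $\pi_{n+m}(F)$.

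It then remains to check that these two equivalences are compatible with $t_{X,Y}^*$. I would invoke the naturality of $\sma$ in its last argument, applied to a pointed map $F : X \wedge Y \pto K(C,n+m)$. Because the adjunct of $F$ is $F \circ \sminsbt$ (recall $\sminsbt$ is the adjunct of the identity of $X \wedge Y$), naturality gives $\sma(F \circ \sminsbt) = \pi_{n+m}(F) \circ \sma(\sminsbt)$. Now $t_{X,Y}$ is, by construction, the preimage of $\sma(\sminsbt)$ under the abelianization equivalence $(\pi_n(X)^{ab} \Gto \pi_m(Y)^{ab} \Gto \pi_{n+m}(X \wedge Y)) \simeq (\pi_n(X) \Gto \pi_m(Y) \Gto \pi_{n+m}(X \wedge Y))$, i.e.\ precomposing $t_{X,Y}$ with the two abelianization maps recovers $\sma(\sminsbt)$. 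Writing $\phi = \pi_{n+m}(F)$, the identity above says precisely that, after composing with the corresponding abelianization equivalence $(\pi_n(X)^{ab} \Gto \pi_m(Y)^{ab} \Gto C) \simeq (\pi_n(X) \Gto \pi_m(Y) \Gto C)$, the map $t_{X,Y}^*$ agrees with the composite equivalence $\sma \circ (\text{adjunction}) \circ (F \mapsto \pi_{n+m}(F))^{-1}$ built in the previous paragraph. Since that composite and the abelianization equivalence are both equivalences, $t_{X,Y}^*$ is an equivalence, so $t_{X,Y}$ exhibits $\pi_{n+m}(X \wedge Y)$ as the desired tensor product.

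The main obstacle, and the step requiring the most care, is the second equivalence: identifying $(X \wedge Y \pto K(C,n+m))$ with $(\pi_{n+m}(X \wedge Y) \Gto C)$ in a way that is visibly $F \mapsto \pi_{n+m}(F)$, so that the single naturality square for $\sma$ closes on the nose. This hinges on pinning down the group identification $\loopspacesym^{n+m} K(C,n+m) \simeq C$ and the passage from magma maps on $\loopspacesym^{n+m}(X \wedge Y)$ to group homomorphisms on its $0$-truncation, exactly as done in \cref{prop:connectedtotruncated}. Everything else is a formal assembly of equivalences together with tracking the one naturality identity for $\sma$.
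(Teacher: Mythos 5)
Your proposal is correct and is essentially the paper's own proof: both fix an abelian group $C$, represent it by $K(C,n+m)$, and combine \cref{lem:smashhomadjunction}, \cref{thm:smashisequiv}, and \cref{cor:smashconnected} with \cref{prop:connectedtotruncated} to build an equivalence $(\pi_{n+m}(X \wedge Y) \Gto C) \simeq (\pi_n(X)^{ab} \Gto \pi_m(Y)^{ab} \Gto C)$ and then identify it with $t_{X,Y}^*$. Your single naturality identity $\sma(F \circ \sminsbt) = \pi_{n+m}(F) \circ \sma(\sminsbt)$ is precisely the content of the paper's diagram chase of $\idfunc{\XY}$ and $h$ through its commuting squares, so the two arguments differ only in presentation.
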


This implies in particular that tensor products of abelian groups exist.

\begin{proof}
    Given an abelian group $C$, we must show that the map
    \[
        t_{X,Y}^* : \left(\pi_{n+m}(\XY) \Gto C\right) \lra \left(\pi_n(X)^{ab} \Gto \pi_m(Y)^{ab} \Gto C \right) .
    \]
    is an equivalence.
    The following diagram will let us show that $t_{X,Y}^*$ is homotopic
    to a map that is easily proven to be an equivalence.
    Let $h : \pi_{n+m}(\XY) \Gto C$ and consider the diagram:
    \[
    \begin{tikzcd}
    & \left( \pi_{n+m}(\XY) \Gto C \right) \\
      \left( \XY \pto \XY \right) \ar[r,"h'_*"] \ar[d,"\sim"]
    & \left( \XY \pto K(C, n+m) \right) \ar[u,"\pi_{n+m}"',"\sim"] \ar[d,"\sim"'] \\
      \left( X \pto Y \pto \XY \right)  \ar[d,"\sma"'] \ar[r, "h'_*"]
    & \left( X \pto Y \pto K(C, n+m) \right)  \ar[d,"\sma","\sim"'] \\
      \left( \pi_n(X) \Gto \pi_m(Y) \Gto \pi_{n+m}(\XY) \right) \ar[r,"h_*"]
    & \left( \pi_n(X) \Gto \pi_m(Y) \Gto C \right)\\
      \left( \pi_n(X)^{ab} \Gto \pi_m(Y)^{ab} \Gto \pi_{n+m}(\XY) \right) \ar[r,"h_*"] \ar[u,"\sim"']
    & \left( \pi_n(X)^{ab} \Gto \pi_m(Y)^{ab} \Gto C \right) \ar[u,"\sim"] 
    \end{tikzcd}
    \]
    We explain the diagram.
    The right-hand vertical arrow at the top is an equivalence by
    \cref{cor:smashconnected,prop:connectedtotruncated}, and also implicitly
    uses a chosen equivalence $e : \pi_{n+m}(K(C, n+m)) \simeq C$.
    The unlabeled vertical arrows bordering the first square are the adjunction
    from \cref{lem:smashhomadjunction}.
    The vertical arrows labelled $\sma$ are from \cref{defn:sm};
    the right-hand one uses $e$ and is an equivalence by \cref{thm:smashisequiv}.
    The unlabeled vertical arrows at the bottom are from the universal property
    of abelianization.
    The horizontal maps labelled $h_*$ are postcomposition by $h$.
    The horizontal maps labelled $h'_*$ are postcomposition with the map
    $h' : \XY \pto K(C, n+m)$ which corresponds to $h$ under the 
    displayed equivalence $\pi_{n+m}$.
    It is straightforward to check that the three squares commute.

    The right-hand column is an equivalence which we will show is
    homotopic to $t_{X,Y}^*$.
    Consider the identity map $\idfunc{\XY}$ at the top
    of the left-hand side.  Its image in the bottom left corner is $t_{X,Y}$,
    and the image of $t_{X,Y}$ under $h_*$ is equal to the image of $h$ under $t_{X,Y}^*$.
    By definition of $h'$, the image of $\idfunc{\XY}$ in the top-right
    corner is $h$.  So the right-hand column sends $h$ to $t_{X,Y}^*(h)$.
    That is, the composite vertical equivalence is homotopic to $t_{X,Y}^*$.
\end{proof}

\subsection{Smash products, truncation, and suspension}\label{ss:smashtrunc}

The goal of this section is to prove a result about the interaction
of smash products and truncation, and a result about the interaction
of smash products and suspension. Both results make use of the symmetry
of the smash product, so we begin with that.

\begin{defn}\label{defn:tau}
    Given pointed types $X$ and $Y$, there is a pointed map $\tau : X \wedge Y \pto Y \wedge X$ defined
    by induction on the smash product in the following way:
\begin{itemize}
\item $\tau(\smin (x,y)) \defeq \smin (y,x)$;
\item $\tau(\ptl) \defeq \ptr$;
\item $\tau(\ptr) \defeq \ptl$;
\item $\ap{\tau}(\gluel\ y) \defpath \gluer\ y$.
\item $\ap{\tau}(\gluer\ x) \defpath \gluel\ x$.
\end{itemize}
It is pointed by $\refl{\smin(y_0,x_0)}$.
\end{defn}

\begin{lem}\label{lem:tauisequiv}
    For pointed types $X$ and $Y$, the composite $\tau \circ \tau : X \wedge Y \pto X \wedge Y$
    is pointed homotopic to the identity.
    In particular, the map $\tau$ is an equivalence.
\end{lem}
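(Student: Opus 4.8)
The plan is to build a pointed homotopy $\tau \circ \tau \psim \idfunc{X \wedge Y}$ by induction on the smash product, and then read off that $\tau$ is an equivalence. Throughout I write $\tau_{X,Y}$ for the map of \cref{defn:tau}, so that the composite under consideration is really $\tau_{Y,X} \circ \tau_{X,Y} : X \wedge Y \pto X \wedge Y$, where the second factor swaps the roles of the two smash factors.

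First I would unfold the composite on the constructors of $X \wedge Y$, applying \cref{defn:tau} twice. On the point constructors this gives, definitionally, $(\tau\circ\tau)(\smin(x,y)) \equiv \smin(x,y)$, $(\tau\circ\tau)(\ptl) \equiv \ptl$, and $(\tau\circ\tau)(\ptr) \equiv \ptr$. On the path constructors, using $\ap{\tau\circ\tau}(p) = \ap{\tau}(\ap{\tau}(p))$ together with the computation rules for $\ap{\tau}$ on the glue constructors, one finds $\ap{\tau\circ\tau}(\gluel\ y) = \gluel\ y$ and $\ap{\tau\circ\tau}(\gluer\ x) = \gluer\ x$: the first copy of $\tau$ sends $\gluel\ y$ to $\gluer\ y$ and the second sends that back to $\gluel\ y$, and dually for $\gluer$.

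With these computations in hand, I would apply the induction principle of $X \wedge Y$ to construct the homotopy $H : \prod_{w : X \wedge Y} (\tau\circ\tau)(w) = w$. On the three point constructors I take $H$ to be $\refl{}$. The induction principle then demands one coherence datum for each of $\gluel$ and $\gluer$; since $H$ is reflexivity at the endpoints, each such datum collapses to exactly the equality $\ap{\tau\circ\tau}(\gluel\ y) = \gluel\ y$ (respectively the one for $\gluer$) established above, once the relevant dependent path is untangled. This yields the underlying homotopy. To make it pointed I would check compatibility at $\smin(x_0,y_0)$: because $\tau$ is pointed by $\refl{}$, the pointing path of the composite $\tau\circ\tau$ is $\ap{\tau}(\refl{}) \ct{\refl{}}{\refl{}}$, which reduces to $\refl{}$, matching the pointing of $\idfunc{X\wedge Y}$; and $H(\smin(x_0,y_0)) \equiv \refl{}$, so the pointedness condition holds immediately.

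Finally, the entire construction is symmetric in $X$ and $Y$, so the same argument applied with the roles reversed yields a pointed homotopy $\tau_{X,Y} \circ \tau_{Y,X} \psim \idfunc{Y \wedge X}$. Thus $\tau_{Y,X}$ is a two-sided homotopy inverse of $\tau$, which exhibits $\tau$ as an equivalence. I expect the main obstacle to be the path algebra of the middle step: verifying that the coherence conditions required by smash induction reduce precisely to the strict equalities $\ap{\tau\circ\tau}(\gluel\ y)=\gluel\ y$ and $\ap{\tau\circ\tau}(\gluer\ x)=\gluer\ x$ requires careful management of the functoriality of the action on paths under composition, the propositional computation rules for $\ap{\tau}$ on the glue constructors, and the bookkeeping of the swap between $\gluel$ and $\gluer$.
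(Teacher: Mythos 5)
Your proof is correct and takes essentially the same route as the paper: smash induction with reflexivity at the three point constructors, with the coherence datum over each glue constructor untangled (via the transport computation) to the equality $\ap{\tau}(\ap{\tau}(\gluel\ y))^{-1} \cdot \refl{\smin(x_0,y)} \cdot \gluel\ y = \refl{\ptl}$, which follows from the computation rules $\ap{\tau}(\gluel\ y) = \gluer\ y$ and $\ap{\tau}(\gluer\ x) = \gluel\ x$, and with pointedness reducing to reflexivity. Your closing two-sided-inverse argument via the $X$/$Y$ symmetry is exactly what the paper leaves implicit in its ``in particular.''
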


\begin{proof}
We first show that for every $z: X \wedge Y$, $\tau(\tau(z)) = z$.
We prove this using the induction principle for smash products.
For the three point-constructors, this holds definitionally.
The two 1-dimensional constructors are similar, so we only consider the first one.
We must show that for each $y : Y$,
\[
  \transfib{z \mapsto \tau(\tau(z)) = z}{\gluel\ y}{\refl{\smin(x_0,y)}} = \refl{\ptl} .
\]
By a calculation similar to those in~\cite[Section~2.11]{hottbook}, the left-hand-side
is equal to
\[
  \ap{\tau}(\ap{\tau}(\gluel\ y))^{-1} \cdot \refl{\smin(x_0,y)} \cdot \gluel\ y .
\]
By the definition of $\tau$ in \cref{defn:tau}, this is equal to
\[
  (\gluel\ y)^{-1} \cdot \refl{\smin(x_0,y)} \cdot \gluel\ y ,
\]
which is equal to $\refl{\ptl}$, as required.

We must also show that this homotopy is pointed.  Up to definitional equality,
this amounts to showing that $\refl{\smin(x_0,y_0)} = \refl{\smin(x_0,y_0)}$,
which is true by reflexivity.
\end{proof}

Next we show that the map $\tau$ is natural.

\begin{lem}\label{lem:taunaturality}
    Given pointed maps $f : X \pto X'$ and $g : Y \pto Y'$ between pointed types,
    the following square of pointed maps commutes:
    \[
        \begin{tikzpicture}
          \matrix (m) [matrix of math nodes,row sep=2em,column sep=3em,minimum width=2em,nodes={text height=1.75ex,text depth=0.25ex}]
          { X \wedge Y & X' \wedge Y' \\
            Y \wedge X  & Y' \wedge X'. \\};
          \path[-stealth]
            (m-1-1) edge node [above] {$f \wedge g$} (m-1-2)
                    edge node [left] {$\tau$} (m-2-1)
            (m-2-1) edge node [above] {$g \wedge f$} (m-2-2)
            (m-1-2) edge node [right]{$\tau$} (m-2-2)
            ;
        \end{tikzpicture}
    \]
\end{lem}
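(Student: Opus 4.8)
The plan is to construct a pointed homotopy
$(g \wedge f) \circ \tau \psim \tau \circ (f \wedge g)$, from which the
commutativity of the square follows by function extensionality for pointed maps.
Since both composites are pointed maps out of the higher inductive type
$X \wedge Y$, I would build this homotopy by smash induction, reducing the
problem to checking compatible data on each of the five constructors of
$X \wedge Y$.

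First I would dispatch the three point constructors. Unwinding the definition
of $\tau$ from \cref{defn:tau} together with the functorial action of the smash
product, both $(g \wedge f)(\tau(\smin(x,y)))$ and $\tau((f \wedge g)(\smin(x,y)))$
reduce definitionally to $\smin(g(y), f(x))$; likewise both composites send
$\ptl$ to $\ptr$ and $\ptr$ to $\ptl$. Hence on all three point constructors the
homotopy can be taken to be $\refl{}$, and the pointedness of the homotopy, checked
at $\smin(x_0,y_0)$, then holds by reflexivity since $\tau$ and the smash functors
act by reflexivity on basepoints.

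The substantive content is the two $1$-dimensional constructors, $\gluel$ and
$\gluer$. By the symmetry of the construction these are interchanged by $\tau$,
so it suffices to treat $\gluel$. Here I must exhibit a $2$-cell relating
$\ap{(g \wedge f) \circ \tau}(\gluel\ y)$ and $\ap{\tau \circ (f \wedge g)}(\gluel\ y)$
that is compatible with the reflexivity homotopies chosen on the endpoints. Using
functoriality of $\ap$ and the computation rule $\ap{\tau}(\gluel\ y) = \gluer\ y$,
the left-hand side rewrites as $\ap{g \wedge f}(\gluer\ y)$, while the right-hand side
rewrites as $\ap{\tau}(\ap{f \wedge g}(\gluel\ y))$. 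To keep this manageable I would
first do path induction on the pointedness paths $f_0 : f(x_0) = x_0'$ and
$g_0 : g(y_0) = y_0'$, reducing to the case where $f$ and $g$ are strictly pointed.
In that case $f \wedge g$ and $g \wedge f$ send the glue constructors directly to the
corresponding glue constructors with no correction terms, and the computation rule
for $\tau$ makes both composites act on $\gluel\ y$ by the same glue path, so the
required $2$-cell is reflexivity.

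The hard part will be exactly this last coherence bookkeeping: matching the endpoints
of the whiskered glue paths against the transports that appear in the smash induction
principle, in the spirit of the calculation in \cref{lem:tauisequiv}. Once the
reduction to strictly pointed $f$ and $g$ is in place, however, I expect the residual
path algebra to collapse to reflexivity, completing the construction of the pointed
homotopy and hence the proof.
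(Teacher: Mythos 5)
Your proposal is correct and follows essentially the same route as the paper's proof: path induction on the pointedness paths to reduce to strictly pointed $f$ and $g$, then induction on the smash product, with the three point constructors holding definitionally and the glue constructors requiring only routine path algebra. The one caveat is that the $2$-cell in the glue case will not be literally reflexivity, since the computation rules for path constructors of a higher inductive type hold only propositionally, but composing those witnesses is exactly the ``straightforward path algebra'' the paper alludes to.
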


\begin{proof}
By path induction we can reduce to the case that
$f(x_0) \equiv x_0'$ and $g(y_0) \equiv y_0'$.
Next we use the induction principle for $X \wedge Y$.
The square commutes definitionally on the three point constructors of $X \wedge Y$,
but requires some straightforward path algebra in the remaining two cases.
Since the proof has been formalized, we omit the details.
\end{proof}

\begin{lem}\label{lem:smashtruncated}
    Let $m \geq -1$, let $n \geq 0$, let $Y$ be a pointed type, and let $X$ be a pointed, $(n-1)$-connected type.
    Then the map $\ttrunc{n+m}{|-|_m \wedge \idfunc{X}} : \ttrunc{n+m}{Y \wedge X} \to \ttrunc{n+m}{\ttrunc{m}{Y} \wedge X}$ is an equivalence.
\end{lem}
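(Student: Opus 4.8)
The plan is to reduce the statement to the universal properties of truncation and of the smash product. Write $g \defeq |-|_m \wedge \idfunc{X} : Y \wedge X \pto \ttrunc{m}{Y} \wedge X$, so that the map in question is $\ttrunc{n+m}{g}$, a pointed map between the $(n+m)$-types $\ttrunc{n+m}{Y \wedge X}$ and $\ttrunc{n+m}{\ttrunc{m}{Y} \wedge X}$. A map between $(n+m)$-types is an equivalence as soon as it induces equivalences $({-}) \circ \ttrunc{n+m}{g} : (\ttrunc{n+m}{\ttrunc{m}{Y} \wedge X} \to Z) \to (\ttrunc{n+m}{Y \wedge X} \to Z)$ for every $(n+m)$-type $Z$ (testing against $Z$ the source and the target produces a two-sided inverse in the standard way), so it suffices to prove this. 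By the universal property of $(n+m)$-truncation these precomposition maps are naturally identified with the unpointed precomposition maps
\[
    ({-}) \circ g : (\ttrunc{m}{Y} \wedge X \to Z) \lra (Y \wedge X \to Z),
\]
and it is therefore enough to show that each of these is an equivalence for $Z$ an $(n+m)$-type.

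To bring the pointed smash--hom adjunction to bear, I would split off the basepoint: for any pointed type $C$ and any type $Z$ there is a natural equivalence $(C \to Z) \simeq \sm{z : Z}(C \pto (Z,z))$, and since $g$ is pointed the displayed precomposition map is, fibrewise over $z : Z$, the pointed precomposition map $({-}) \circ g : ((\ttrunc{m}{Y} \wedge X) \pto (Z,z)) \to ((Y \wedge X) \pto (Z,z))$. Thus it suffices to prove that this pointed precomposition map is an equivalence for every choice of basepoint $z : Z$.

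Now I would apply \cref{lem:smashhomadjunction}, which is natural in all three variables, to identify this pointed precomposition map with
\[
    ({-}) \circ |-|_m : (\ttrunc{m}{Y} \pto (X \pto (Z,z))) \lra (Y \pto (X \pto (Z,z))),
\]
precomposition with the truncation map $|-|_m : Y \pto \ttrunc{m}{Y}$ into $W \defeq X \pto (Z,z)$. By \cref{lem:mappingspacetrunc} (with its ``$m$'' taken to be $n$ and its ``$n$'' taken to be $m$, using $n \geq 0$ and $m \geq -1$), the type $W$ is $m$-truncated, since $X$ is $(n-1)$-connected and $(Z,z)$ is $(n+m)$-truncated. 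Hence, by the universal property of $m$-truncation, precomposition with $|-|_m$ is an equivalence $(\ttrunc{m}{Y} \pto W) \simeq (Y \pto W)$. This closes the chain of reductions.

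Most of the work is bookkeeping; the step I expect to require the most care is checking that the three natural equivalences in play -- the truncation universal property, the basepoint splitting, and \cref{lem:smashhomadjunction} -- really do carry the outer map $({-}) \circ \ttrunc{n+m}{g}$ to precomposition with $|-|_m$, i.e.\ that every identification is compatible with precomposition. The naturality of \cref{lem:smashhomadjunction} in the first smash factor is precisely what makes $g = |-|_m \wedge \idfunc{X}$ correspond to $|-|_m$ under the adjunction, and the fact that $g$ is pointed is what allows the basepoint splitting to be carried out fibrewise over $z : Z$.
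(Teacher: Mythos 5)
Your proof is correct and takes essentially the same route as the paper's: test $\ttrunc{n+m}{g}$ against $(n+m)$-truncated types, carry precomposition with $|-|_m \wedge \idfunc{X}$ across the smash--hom adjunction (\cref{lem:smashhomadjunction}) using its naturality in the first variable, and conclude from the $m$-truncatedness of $X \pto T$ (\cref{lem:mappingspacetrunc}, with the indices swapped exactly as you note) together with the universal property of $m$-truncation. The only difference is that you explicitly justify, via the basepoint-splitting $(C \to Z) \simeq \sm{z : Z}(C \pto (Z,z))$ applied fibrewise, the reduction to pointed mapping spaces that the paper dispatches in one line (``since the map in the statement is pointed, it is enough to show\dots'').
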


\begin{proof}
    Since the map in the statement is pointed, it is enough to show that
    for every pointed, $(n+m)$-truncated type $T$, precomposition with $|-|_m \wedge \idfunc{Y}$ induces an equivalence
    \[
        (\ttrunc{m}{Y} \wedge X \pto T) \lra (Y \wedge X \pto T).
    \]
    By the naturality in the first variable of the adjunction from \cref{lem:smashhomadjunction},
    it is enough to show that precomposition with $|-|_m$ induces an equivalence
    \[
        (\ttrunc{m}{Y} \pto X \pto T) \lra (Y \pto X \pto T),
    \]
    and this follows from the fact that the type $X \pto T$ is $m$-truncated (\cref{lem:mappingspacetrunc}).
\end{proof}

\begin{cor}\label{cor:smashtruncated}
    Let $m \geq -1$, let $n \geq 0$, let $X$ be a pointed, $(n-1)$-connected type, and let $Y$ be a pointed type.
    Then the map $\ttrunc{n+m}{\idfunc{X} \wedge |-|_m} : \ttrunc{n+m}{X \wedge Y} \to \ttrunc{n+m}{X \wedge \ttrunc{m}{Y}}$ is an equivalence.
\end{cor}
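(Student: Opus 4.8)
The plan is to deduce this from \cref{lem:smashtruncated} by transporting that result across the symmetry of the smash product. The corollary is precisely the version of the lemma in which the two smash factors are interchanged, so the symmetry equivalence $\tau$ from \cref{defn:tau} should carry one to the other. First I would instantiate the naturality of $\tau$ (\cref{lem:taunaturality}) with the pointed maps $\idfunc{X} : X \pto X$ and $|-|_m : Y \pto \ttrunc{m}{Y}$, obtaining a commutative square of pointed maps
\[
\begin{tikzcd}[column sep=4em]
  X \wedge Y \ar[r,"\idfunc{X} \wedge |-|_m"] \ar[d,"\tau"'] & X \wedge \ttrunc{m}{Y} \ar[d,"\tau"] \\
  Y \wedge X \ar[r,"|-|_m \wedge \idfunc{X}"'] & \ttrunc{m}{Y} \wedge X .
\end{tikzcd}
\]

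Next I would apply the functor $\ttrunc{n+m}{-}$ to this square. Since truncation is functorial, the result is again a commutative square, now relating the four truncated types. The vertical maps become $\ttrunc{n+m}{\tau}$, which are equivalences: $\tau$ is an equivalence by \cref{lem:tauisequiv}, and $\ttrunc{n+m}{-}$ preserves equivalences. The bottom horizontal map is exactly $\ttrunc{n+m}{|-|_m \wedge \idfunc{X}}$, shown to be an equivalence in \cref{lem:smashtruncated}. Commutativity of the truncated square then expresses the top map as $\ttrunc{n+m}{\idfunc{X} \wedge |-|_m} = \ttrunc{n+m}{\tau}^{-1} \circ \ttrunc{n+m}{|-|_m \wedge \idfunc{X}} \circ \ttrunc{n+m}{\tau}$, a composite of three equivalences, and hence itself an equivalence, which is the claim.

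There is no serious obstacle in this argument: it is a direct transport of \cref{lem:smashtruncated} across the symmetry of the smash product. The only points needing verification are the routine facts that $\ttrunc{n+m}{-}$ preserves both commuting squares and equivalences, and that the bottom edge of the truncated square is indeed the map appearing in the lemma; both are immediate from functoriality of truncation and from the identifications of objects already built into \cref{lem:smashtruncated} and \cref{defn:tau}.
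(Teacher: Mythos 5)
Your proposal is correct and is essentially identical to the paper's own proof: the paper also forms the naturality square of $\tau$ applied to $\idfunc{X}$ and $|-|_m$ (\cref{lem:taunaturality}), notes the vertical maps are equivalences by \cref{lem:tauisequiv}, and deduces the claim from \cref{lem:smashtruncated} after applying $\ttrunc{n+m}{-}$.
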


\begin{proof}
    The square
    \[
        \begin{tikzpicture}
          \matrix (m) [matrix of math nodes,row sep=2em,column sep=4em,minimum width=2em,nodes={text height=1.75ex,text depth=0.25ex}]
          { X \wedge Y & X \wedge \ttrunc{m}{Y} \\
            Y \wedge X  & \ttrunc{m}{Y} \wedge X. \\};
          \path[-stealth]
            (m-1-1) edge node [above] {${\scriptstyle\idfunc{X} \wedge |-|_m}$} (m-1-2)
                    edge node [left] {$\tau$} (m-2-1)
            (m-2-1) edge node [above] {${\scriptstyle |-|_m \wedge \idfunc{X}}$} (m-2-2)
            (m-1-2) edge node [right]{$\tau$} (m-2-2)
            ;
        \end{tikzpicture}
    \]
    commutes by \cref{lem:taunaturality}.
    The vertical maps are equivalences by \cref{lem:tauisequiv}.
    By \cref{lem:smashtruncated}, the bottom map is an equivalence after $(n+m)$-truncation,
    so the top map must also be an equivalence after truncating.
\end{proof}

We conclude this section with a result letting us commute suspension and smash products.

\begin{lem}\label{lem:suspensionandsmash}
    Given pointed types $X$ and $Y$, there is a pointed equivalence
\[
  c_\suspsym : \susp (X \wedge Y) \simeq_\sbt X \wedge \susp Y,
\]
    natural in both $X$ and $Y$.
\end{lem}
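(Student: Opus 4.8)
The plan is to produce $c_\suspsym$ by a Yoneda argument in the $1$-coherent category $\UUcat_\sbt$ of pointed types, exactly in the spirit of the proofs of \cref{lem:smashtruncated} and \cref{cor:smashtruncated}: rather than defining the comparison map by hand and fighting with the path algebra coming from the constructors of $\wedge$ and $\suspsym$, I would characterize both sides by their mapping-out behaviour. So first I would build, for every pointed type $Z$, a chain of equivalences
\[
\begin{aligned}
  (X \wedge \susp Y \pto Z)
    &\simeq \bigl(X \pto (\susp Y \pto Z)\bigr) \\
    &\simeq \bigl(X \pto (Y \pto \loopspacesym Z)\bigr) \\
    &\simeq (X \wedge Y \pto \loopspacesym Z) \\
    &\simeq (\susp(X \wedge Y) \pto Z),
\end{aligned}
\]
natural in $X$, $Y$, and $Z$. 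The first and third equivalences are the smash--hom adjunction of \cref{lem:smashhomadjunction}, and the fourth is the suspension--loop adjunction of \cref{susp-loops-adjunction}.

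The middle equivalence is the one point that needs a small argument: I would obtain it by applying the corepresentable functor $\Y^X = (X \pto {-})$ to the suspension--loop equivalence $(\susp Y \pto Z) \simeq (Y \pto \loopspacesym Z)$. Since a $1$-coherent functor preserves equivalences, the result is again an equivalence; here I would use that this suspension--loop equivalence is pointed (it carries the constant map to the constant map), so that $\Y^X$ can indeed be applied to it. Naturality in $Y$ and $Z$ is inherited from \cref{susp-loops-adjunction}, and naturality in $X$ from functoriality of $\Y^X$. Then, since natural isomorphisms compose, the whole chain assembles into a natural isomorphism of corepresentable functors $\Y^{X \wedge \susp Y} \to \Y^{\susp(X \wedge Y)}$ on $\UUcat_\sbt$ for each fixed $X$ and $Y$.

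With this in hand, I would fix $X$ and $Y$ and invoke \cref{1-coherent-yoneda-embedding} with $a \defeq \susp(X \wedge Y)$ and $b \defeq X \wedge \susp Y$. Writing $\alpha$ for the natural isomorphism just constructed, the proposition produces $c_\suspsym \defeq \alpha_0(b)(\idfunc{b}) : \susp(X \wedge Y) \pto X \wedge \susp Y$, and guarantees that $c_\suspsym$ is part of an isomorphism, i.e.\ a pointed equivalence. This simultaneously defines the map and proves it is an equivalence, so there is no separate equivalence computation to carry out.

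The hard part will be the naturality of $c_\suspsym$ in $X$ and $Y$, since Yoneda was applied separately at each pair $(X,Y)$. To handle this I would regard $\susp({-}\wedge{-})$ and ${-}\wedge\susp({-})$ as $1$-coherent functors $\UUcat_\sbt \times \UUcat_\sbt \to \UUcat_\sbt$, and check that the four building equivalences above are natural not only in $Z$ but also in $X$ and $Y$ — which they are, because \cref{lem:smashhomadjunction} and \cref{susp-loops-adjunction} are natural in every variable. The Yoneda formula $c_\suspsym = \alpha_0(b)(\idfunc{b})$ then transports this functoriality into the required naturality squares, since identity morphisms are themselves natural. Assembling this bookkeeping carefully — verifying that the chain defines a natural transformation between the two families of corepresentable presheaves before extracting the map — is the most laborious step, and is the analogue of verifying the commuting squares (via $\tau$ and the adjunction) in \cref{cor:smashtruncated}.
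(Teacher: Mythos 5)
Your overall architecture---characterize both sides by their mapping-out behaviour, chain the adjunctions, and extract the map via \cref{1-coherent-yoneda-embedding}---is the same as the paper's, but your chain has a genuine gap at exactly the step you single out as ``the one point that needs a small argument.'' To whisker the suspension--loop equivalence with $\Y^X = (X \pto {-})$ you need more than you have: $\Y^X$ is a $1$-coherent functor on $\UUcat_\sbt$, so it can only be applied to \emph{pointed} maps, and it only transports naturality squares that commute \emph{as pointed maps}. \cref{susp-loops-adjunction} provides neither: it is stated for functors valued in $\UUcat$, so its components are bare equivalences of types and its naturality witnesses are equalities of underlying functions only. Since a pointed map is a function together with a pointedness path, two pointed maps with equal underlying functions need not be equal, and an unpointed naturality equality does not survive postcomposition on pointed mapping types: $(X \pto f)$ and $(X \pto g)$ can differ even when $f$ and $g$ agree as functions. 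What your middle step requires is the ``enriched'' form of the suspension--loop adjunction (pointed components together with pointed naturality), and the paper explicitly flags, in the discussion after \cref{lem:smashhomadjunction}, that such enriched strengthenings have not been proven and are deliberately not used. So ``naturality in $Y$ and $Z$ is inherited from \cref{susp-loops-adjunction}'' is not justified by the cited statement; this is a real missing ingredient, not bookkeeping.

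The paper's proof is shaped precisely by this obstruction: it never applies the suspension--loop adjunction inside $(X \pto {-})$. Instead it first constructs $\susp(X \wedge Y) \psimeq \susp X \wedge Y$ (suspending the \emph{first} factor), using \cref{susp-loops-adjunction} only at the outermost level of the chain, where pointedness of the equivalence is irrelevant, together with \cref{lem:smashhomadjunction}; the one inner step, $(X \pto Y \pto \loopspacesym Z) \simeq (X \pto \loopspacesym(Y \pto Z))$, is handled by \cref{lem:homotopygroupmappingspace}, which---unlike the suspension--loop adjunction---\emph{is} proven as a pointed (magma) equivalence, natural as functors into $\pMgm$, so whiskering with $(X \pto {-})$ is legitimate there. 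The statement you want is then obtained by conjugating with the symmetry $\tau$ of \cref{defn:tau}, using \cref{lem:tauisequiv,lem:taunaturality}. To repair your argument you would either have to prove the pointed, enriched suspension--loop adjunction (plausibly true, but not a small argument and not available in the paper), or take the paper's detour through $\susp X \wedge Y$ and $\tau$.
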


\begin{proof}
    By \cref{defn:tau} and \cref{lem:tauisequiv,lem:taunaturality}, it is enough construct a natural equivalence
    $\susp (X \wedge Y) \simeq_\sbt \susp X \wedge Y$.
    In order to do this, it suffices to show that, for every pointed type $Z$, there is an
    equivalence $(\susp (X \wedge Y) \to_\sbt Z) \simeq (\susp X \wedge Y \to_\sbt Z)$
    natural in $X$, $Y$, and $Z$, by the Yoneda Lemma (\cref{1-coherent-yoneda-embedding}).
    Given a pointed type $Z$, we define the equivalence as the following composite
    of natural equivalences:
    \begin{align*}
      (\susp (X \wedge Y) \to_\sbt Z) &\simeq (X \wedge Y \to_\sbt \loopspacesym Z)\\
                                      &\simeq (X \to_\sbt Y \to_\sbt \loopspacesym Z)\\
                                      &\simeq (X \to_\sbt \loopspacesym (Y \to_\sbt Z))\\
                                      &\simeq (\susp X \to_\sbt Y \to_\sbt Z)\\
                                      &\simeq (\susp X \wedge Y \to_\sbt Z).
    \end{align*}
    The first and fourth equivalences follow from the adjunction between suspension and loops
    (\cref{susp-loops-adjunction}).
    The second and fifth equivalences use \cref{lem:smashhomadjunction}.
    The third equivalence follows from \cref{lem:homotopygroupmappingspace}.
    This concludes the proof.
\end{proof}

This result was formalized in the Spectral repository~\cite{doornthesis},
but the proof of naturality is not complete.

\section{Homology and the Hurewicz theorem}\label{se:hurewicz}

In this section, we begin by defining homology and proving the Hurewicz theorem.
Then we define the Hurewicz homomorphism and prove that it is unique up to sign.
We conclude by giving some applications about the interaction between homology, connectedness,
and truncation.

\subsection{Prespectra and homology}\label{ss:prespectra-homology}

In this section, we introduce prespectra as a tool for defining the homology
groups of a type.

\begin{defn}
    A \define{prespectrum} $(Y,s)$ is a family of pointed types $Y : \N \to \UU_\sbt$
    and a family of pointed \define{structure maps} $s : \prd{n : \N} Y_n \pto \loopspacesym Y_{n+1}$.
    When the structure maps of $Y$ are clear from the context, we will denote the prespectrum simply by $Y$.
\end{defn}

\begin{defn}
    A \define{map of prespectra} $f : (T,s) \to (T',s')$ consists of
    a family of pointed maps $f : \prd{n : \N} Y_n \pto Y'_n$, and a family of
    pointed homotopies $\prd{n : \N} \loopspacesym s'_n \circ f_n \sim_\sbt \loopspacesym f_{n+1} \circ s_n$.
\end{defn}

By \cref{susp-loops-adjunction}, a prespectrum can be equivalently defined by giving a family of pointed types
$Y : \N \to \UU_\sbt$ and a family of pointed maps $\suspsym Y_n \pto Y_{n+1}$.
This is the way that we will specify prespectra.

\begin{eg}
    Eilenberg--Mac Lane spaces are defined in homotopy type theory in \cite{FinsterLicata}.
    Given an abelian group $A$, the \define{Eilenberg--Mac Lane prespectrum} $HA$ of type $A$
    is given by the family $\lambda n. K(A,n)$ of pointed types,
    where we let $K(A,0) \defeq A$, pointed at $0$.
    For $n \geq 1$, the structure map is
    \[
    |-|_{n+1} : \susp K(A,n) \lra \ttrunc{n+1}{\susp K(A,n)} \equiv K(A,n+1).
    \]
    When $n\equiv 0$, we
    define $\susp K(A,0) \to K(A,1)$ by induction on suspension, by mapping the north and south
    poles of $\susp K(A,0)$ to the base point of $K(A,1)$, and $\merid(a)$ to the loop
    of $K(A,1)$ represented by $a$.
\end{eg}

\begin{defn}
    Given a pointed type $X$ and a prespectrum $(Y,s)$, we form a prespectrum $X \wedge Y$,
    called the \define{smash product} of $X$ and $Y$, as follows.
    The type family is given by $(X \wedge Y)_n \equiv X \wedge Y_n$.
    The structure maps are given by the following composite:
    \begin{equation}\label{equation:structuremap}
        \susp (X \wedge Y_n) \xrightarrow{c_\suspsym}_\sbt
        X \wedge \susp Y_n \xrightarrow{\idfunc{X} \wedge \overline{s_n}}_\sbt
        X \wedge Y_{n+1},
    \end{equation}
    where $c_{\Sigma}$ is the map from \cref{lem:suspensionandsmash}
    and $\overline{s_n} : \susp Y_n \pto Y_{n+1}$ corresponds to $s_n : Y_n \pto \loopspacesym Y_{n+1}$.
\end{defn}

Note that, by the naturality of \cref{lem:suspensionandsmash}
and the functoriality of the smash product on pointed types,
the smash product of a pointed type and a prespectrum is functorial.

\begin{defn}
    The type of \define{sequential diagrams of groups} is the type
    \[
        \Grp^\N \defeq \sum_{A : \N \to \Grp} \, \prod_{n : \N} \, A_n \to_\Grp A_{n+1}.
    \]

    Analogously, we define the type of \define{sequential diagrams of abelian groups}, which
    we denote by $\Ab^\N$.
\end{defn}

The most important example in this paper is given by sequential diagrams of
groups that come from prespectra.

\begin{eg}
    Let $(Y,s)$ be a prespectrum and let $n,k : \N$.
    The map $s_k : Y_k \to_\sbt \loopspacesym Y_{k+1}$ induces a morphism
    $\pi_n(s_k) : \pi_n(Y_k) \to_{\Grp} \pi_n(\loopspacesym Y_{k+1}) \simeq \pi_{n+1}(Y_{k+1})$.
    Iterating this process, we get a sequential diagram of groups
    $\lambda i. \pi_{n+i}(Y_{k+i}) : \N \to \Grp$.
    We denote this diagram by $\stab_k^n(Y)$.
    This construction is natural in $Y$.

    Note that, if $n\geq 2$, the diagram $\stab_k^n(Y)$ is a sequential diagram of abelian groups.
\end{eg}

\begin{defn}
    Let $(A,\phi) : \UU^\N$ be a sequential diagram of types,
    We define the \define{sequential colimit} of $(A,\phi)$, denoted by $\colim A : \UU$,
    as the higher inductive type generated by
    the constructors $\iota : \prod_{n : \N} A_n \to \colim A$
    and $\glue : \prod_{n : \N} \prod_{a : A_n} \iota_n(a) = \iota_{n+1}(\phi_n(a))$.
\end{defn}

\begin{lem}\label{lem:seqcolimofgroups}
    Let $(A,\phi) : \Ab^\N$ be a sequential diagram of abelian groups.
    Then, the sequential colimit $\colim A$ of the underlying sets is a set, and it has a
    canonical abelian group structure such that all of the induced maps
    $i_n : A_n \to \colim A$ are homomorphisms.
    Moreover, the abelian group $\colim A$ has the universal property of
    the colimit in the category of abelian groups.
\end{lem}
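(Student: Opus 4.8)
The plan is to reduce everything to three standard facts about sequential colimits of types and then check all the algebra levelwise. Write $\colim A$ for the sequential colimit of the underlying diagram of sets $(A_n,\phi_n)$, with structure maps $\iota_n : A_n \to \colim A$ and glue paths $\glue_n : \iota_n(a) = \iota_{n+1}(\phi_n(a))$. I will use: \textbf{(F1)} a sequential colimit of sets is again a set (sequential colimits preserve $n$-truncatedness); \textbf{(F2)} the canonical comparison map $\colim(n \mapsto A_n \times A_n) \to \colim A \times \colim A$ (and its finite-power analogues) is an equivalence, i.e.\ sequential colimits commute with finite products; and \textbf{(F3)} the recursion and uniqueness principles of the colimit, in the form that for any set $S$ the maps $\colim A \to S$ are equivalent to cocones $(f_n : A_n \to S)$ with $f_{n+1} \circ \phi_n = f_n$, and that two maps into a set are equal as soon as they agree after precomposition with each $\iota_n$ (the glue cases being automatic, since the goal is then a proposition). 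All three are established in the literature on sequential colimits. Fact (F1) is already the first assertion of the lemma.

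Next I would equip $\colim A$ with its operations. The unit is simply $0 \defeq \iota_0(0)$, and negation is induced by the levelwise diagram map $(-) : A_n \to A_n$, which is a map of diagrams because each $\phi_n$ is a homomorphism. For addition I transport the levelwise additions $A_n \times A_n \to A_n$ (again a map of diagrams) across (F2): they induce a map $\colim(A \times A) \to \colim A$, which composed with the inverse comparison equivalence of (F2) gives $+ : \colim A \times \colim A \to \colim A$. Since the comparison equivalence identifies $(\iota_n(a),\iota_n(b))$ with $\iota_n(a,b)$, we get $\iota_n(a) + \iota_n(b) = \iota_n(a+b)$, and likewise $-\iota_n(a) = \iota_n(-a)$ and $\iota_n(0) = 0$; thus every $\iota_n$ is a homomorphism by construction. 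Each abelian-group axiom is then an equality of maps out of a finite power of $\colim A$ into the set $\colim A$; identifying the domain with a single colimit via (F2) and applying (F3), it suffices to verify the axiom after precomposing with the $\iota_n$, where it reduces to the corresponding axiom in the abelian group $A_n$. This yields the canonical abelian group structure.

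For the universal property, fix an abelian group $C$. By (F3) the underlying-map construction is an equivalence between $(\colim A \to C)$ and the type of cocones $(f_n : A_n \to C)$ with $f_{n+1} \circ \phi_n = f_n$. Being a homomorphism is a proposition on each side, so it is enough to check that this equivalence matches the two homomorphism subtypes. If $g : \colim A \to C$ is a homomorphism then each $f_n = g \circ \iota_n$ is a composite of homomorphisms, hence a homomorphism. Conversely, if every $f_n$ is a homomorphism then ``$g$ preserves addition'' is the equality of the two maps $g \circ (+)$ and $(+_C) \circ (g \times g)$ of type $\colim A \times \colim A \to C$; viewing these as maps out of $\colim(A \times A)$ via (F2) and checking on generators with (F3), on $\iota_n(a,b)$ the left side is $g(\iota_n(a+b)) = f_n(a+b)$ and the right side is $f_n(a) + f_n(b)$, which agree since $f_n$ is a homomorphism (preservation of $0$ and of negation is automatic, or checked the same way). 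Restricting the equivalence of (F3) to these matched homomorphism subtypes gives the universal property of $\colim A$ in $\Ab$.

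The main obstacle is the coherent definition of the binary operation and the attendant ``common stage'' bookkeeping: a priori two elements of $\colim A$ arise from different stages of the diagram and must be brought to a common stage before being added. One could do this by a direct double recursion with explicit level-matching, but that produces awkward path algebra; routing everything through the product-commutation equivalence (F2) is what keeps both the construction and the axiom checks clean. The genuine content of the lemma is therefore (F2) together with the principle, built into (F3), that equalities of maps out of a sequential colimit into a set may be verified on the generators $\iota_n$, after which every algebraic fact is inherited directly from the groups $A_n$.
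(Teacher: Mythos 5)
Your proof is correct, but it handles the one genuinely delicate point --- defining the binary operation on $\colim A$ --- by a different mechanism than the paper. The paper defines $+$ directly by double induction on the colimit, using the explicit common-stage formula $\iota_l(a) + \iota_n(b) \defeq \iota_m(\phi_l^m(a) + \phi_n^m(b))$ with $m \equiv \max(l,n)$, discharging the $\glue$/point cases by hand and the $\glue$/$\glue$ cases by set-ness; you instead import the fact that sequential colimits commute with finite products, so that $+$ is simply the functorial image of the levelwise additions under the equivalence $\colim(n \mapsto A_n \times A_n) \simeq \colim A \times \colim A$. Both routes rest on the same truncatedness input (your (F1) is exactly the paper's use of \cite{DoornRijkeSojakova}), and your (F2) is itself a consequence of the main theorem of that same reference, so nothing you assume is out of reach. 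What the paper's approach buys is self-containedness: only the induction principle of the HIT and set-ness are needed, at the cost of the $\max$-bookkeeping and some case analysis that the paper waves through as ``straightforward.'' What your approach buys is modularity: the axiom checks and the matching of homomorphism subtypes in the universal property all become instances of a single uniform principle (maps out of a colimit into a set are determined on generators $\iota_n$, after rewriting finite powers of $\colim A$ as colimits), which is cleaner and likely easier to formalize, at the cost of invoking a heavier commutation theorem. One small gloss worth making explicit: your unit-law check needs $\iota_0(0) = \iota_n(0)$ in $\colim A$, which follows by induction on $n$ from the $\glue$ paths together with $\phi_n(0) = 0$; this is trivial but should be said, since it is the only place the glue paths enter your verification directly.
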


\begin{proof}
    The main difficulty is to show that $\colim A$ is $0$-truncated.
    For this, we use \cite[Corollary~7.7~(1)]{DoornRijkeSojakova}, which says that
    a sequential colimit of $n$-truncated types is $n$-truncated.

    To show that $\colim A$ has an abelian group structure we start by using
    induction to define the operation $+$ on $\colim A$.
    In the case of point constructors, we define
    \mbox{$\iota_l(a) + \iota_n(b) \defeq \iota_m(\phi_l^m(a) + \phi_n^m(b))$,}
    where $m \equiv \max(l,n)$ and $\phi_l^m : A_l \to A_m$ and
    $\phi_n^m : A_n \to A_m$ are defined by iterating $\phi$.
    The case of a path constructor $\glue$ and a point constructor is
    straightforward, and the case of two path constructors is immediate, since $\colim A$ is a set.
    The fact that, with these operation, $\colim A$ is an abelian group is clear.

    The map $\iota_n : A_n \to \colim A$ is a group morphism for every $n$
    by construction, and the fact that $\colim A$ satisfies the universal
    property of the colimit follows from the induction principle of $\colim A$.
\end{proof}

\begin{defn}\label{defn:stablehomotopygroups}
    Let $Y$ be a prespectrum, let $n : \Z$, and let $j \equiv \max(0, 2-n)$.
    We define the \define{$n$-th stable homotopy group} of $Y$ as
    \[
        \pi^s_n(Y) \defeq \colim \stab^{n+j}_j(Y).
    \]
\end{defn}

Note that the stable homotopy groups of a prespectrum are
defined for any integer $n$, and not only for natural numbers.
Moreover, by construction, the sequential diagram in the definition of $\pi^s_n(Y)$
is a sequential diagram of abelian groups, so stable homotopy
groups are always abelian.
As an aside, one can show that any $\stab^{n+j}_j(Y)$ with $j \geq \max(0, 2-n)$
will have an isomorphic colimit.
Finally, since the construction $\stab^n_k(Y)$ is functorial in $Y$, stable homotopy
groups are functorial in the prespectrum.

\begin{defn}\label{defn:homology}
    We define the $n$-th \define{reduced homology} of $X$ with coefficients in $Y$ as
    \[
        \tH_n(X; Y) \defeq \pi_n^s(X \wedge Y).
    \]
    We define the $n$-th \define{(ordinary) reduced homology} of $X$ with coefficients in
    an abelian group $A$ by
    \[
        \tH_n(X; A) \defeq \tH_n(X; HA).
    \]
\end{defn}

Notice that these types carry an abelian group structure, given by the group structure of stable homotopy groups
(\cref{defn:stablehomotopygroups}).

\subsection{The Hurewicz theorem}\label{ss:hurewicz}

In this section, we prove our main result, \cref{thmH}.
To do so, we first show that when $X$ is sufficiently connected,
we can compute $\tH_n(X; A)$ without taking a colimit.

\begin{lem}\label{lem:stabilization}
    Let $n\geq 1$, let $A : \Ab$, and let $X$ be a pointed, $(n-1)$-connected type.
    Then the natural homomorphism $\pi_{n+1}(X \wedge K(A,1)) \to \tH_n(X; A)$
    is an equivalence.
\end{lem}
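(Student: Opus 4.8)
The plan is to unfold the definition of $\tH_n(X;A)$ and reduce the claim to showing that each map in the defining sequential colimit is an equivalence. By \cref{defn:stablehomotopygroups,defn:homology}, $\tH_n(X;A) = \pi^s_n(X \wedge HA)$ is the sequential colimit of the diagram whose $i$-th term is $\pi_{n+1+i}(X \wedge K(A,1+i))$ (together with one extra initial term when $n \geq 2$, which is irrelevant below). Writing $W_k \defeq X \wedge K(A,k)$, the terms are thus $\pi_{n+k}(W_k)$ for $k \geq 1$, each of which is the lowest potentially non-trivial homotopy group of $W_k$, since $W_k$ is $(n+k-1)$-connected by \cref{cor:smashconnected}. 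The map in the statement is the canonical map from the term $\pi_{n+1}(W_1)$ into the colimit. Since a sequential colimit is unchanged by discarding finitely many initial terms, and since the colimit of a sequence of isomorphisms of abelian groups is its first term via the canonical map (using \cref{lem:seqcolimofgroups}), it suffices to prove that every structure map $\pi_{n+k}(W_k) \to \pi_{n+k+1}(W_{k+1})$ with $k \geq 1$ is an isomorphism.

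Next I would identify this structure map concretely. It is induced by the prespectrum structure map $s_k : W_k \pto \loopspacesym W_{k+1}$ followed by the canonical isomorphism $\pi_{n+k}(\loopspacesym W_{k+1}) \cong \pi_{n+k+1}(W_{k+1})$. Passing to the adjoint $\overline{s_k} : \susp{W_k} \pto W_{k+1}$, this identifies the structure map with the composite $\pi_{n+k+1}(\overline{s_k}) \circ \sigma$, where $\sigma : \pi_{n+k}(W_k) \to \pi_{n+k+1}(\susp{W_k})$ is the suspension homomorphism. By the definition of the smash prespectrum $X \wedge HA$ in \cref{equation:structuremap}, the map $\overline{s_k}$ is the composite of the equivalence $c_\suspsym : \susp{W_k} \simeq X \wedge \susp{K(A,k)}$ of \cref{lem:suspensionandsmash} with $\idfunc{X} \wedge |-|_{k+1} : X \wedge \susp{K(A,k)} \to X \wedge K(A,k+1)$, where $|-|_{k+1}$ is the structure map of $HA$, i.e.\ the $(k+1)$-truncation map onto $K(A,k+1) \equiv \ttrunc{k+1}{\susp{K(A,k)}}$.

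It then remains to show both factors are isomorphisms. For $\pi_{n+k+1}(\overline{s_k})$: since $c_\suspsym$ is an equivalence, this reduces to $\idfunc{X} \wedge |-|_{k+1}$, and \cref{cor:smashtruncated} (applied with truncation level $k+1$) shows precisely that this map is an equivalence after $(n+k+1)$-truncation, hence an isomorphism on $\pi_j$ for all $j \leq n+k+1$, in particular on $\pi_{n+k+1}$. For the suspension homomorphism $\sigma$: since $W_k$ is $(n+k-1)$-connected, the Freudenthal suspension theorem~\cite{hottbook} makes $\sigma$ an isomorphism on $\pi_j$ in the stable range $j \leq 2(n+k-1)$; as $n,k \geq 1$ give $n+k \geq 2$, we have $n+k \leq 2(n+k-1)$, so $\sigma$ is an isomorphism in degree $n+k$. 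Composing, each structure map is an isomorphism, which completes the argument; naturality in $X$ and $A$ is inherited from the naturality of all the constructions involved.

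The step I expect to be the main obstacle is the careful identification in the second paragraph: matching the abstract colimit structure map with the composite $\pi_{n+k+1}(\overline{s_k}) \circ \sigma$ requires unwinding the adjunction between $s_k$ and $\overline{s_k}$, tracking the equivalence $c_\suspsym$, and correctly using that the Eilenberg--Mac Lane structure maps are truncation maps, all while staying on the level of honest group homomorphisms. The only substantial external input is the Freudenthal theorem for the suspension homomorphism $\sigma$; everything else is supplied by \cref{cor:smashconnected,cor:smashtruncated,lem:suspensionandsmash,lem:seqcolimofgroups}.
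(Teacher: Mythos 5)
Your proposal is correct and takes essentially the same route as the paper's proof: reduce to showing that each colimit structure map $\pi_{n+i}(X \wedge K(A,i)) \to \pi_{n+i+1}(X \wedge K(A,i+1))$ with $i \geq 1$ is an isomorphism, factor it through $\pi_{n+i+1}(X \wedge \suspsym K(A,i))$ using \cref{equation:structuremap} and \cref{lem:suspensionandsmash}, and conclude via the Freudenthal suspension theorem (with the same connectivity bound $n+i \leq 2(n+i-1)$ from \cref{cor:smashconnected}) and \cref{cor:smashtruncated}. Your explicit treatment of the adjoint identification and of discarding the initial term when $n \geq 2$ is slightly more detailed than the paper's, but the substance coincides.
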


\begin{proof}
    Recall that $\tH_n(X;A) \equiv \pi_n^s(X \wedge HA) \equiv \colim \stab^{n+j}_j(X \wedge HA)$, for
    $j = \max(0,2-n)$.
    Since $n \geq 1$, we must consider two cases, $n=1$ and $n \geq 2$.
    When $n = 1$, we have $j = 1$, and
    the sequential diagram that defines $\tH_n(X; A)$ starts as follows:
    \[
        \phantom{\pi_n(X \wedge K(A,0)) \lra} \pi_{n+1}(X \wedge K(A,1)) \lra \pi_{n+2}(X \wedge K(A,2)) \lra \cdots
    \]
    When $n \geq 2$, we have $j = 0$, and
    the sequential diagram that defines $\tH_n(X; A)$ starts as follows:
    \[
        \pi_n(X \wedge K(A,0)) \lra \pi_{n+1}(X \wedge K(A,1)) \lra \pi_{n+2}(X \wedge K(A,2)) \lra \cdots
    \]
    It suffices to show that in either case the morphism
    \[
        \pi_{n+i}(X \wedge K(A,i)) \lra \pi_{n+i+1}(X \wedge K(A,i+1))
    \]
    is an equivalence for $i \geq 1$.
    To prove this, we use \cref{equation:structuremap} to factor the map as
    \[
        \pi_{n+i}(X \wedge K(A,i)) \lra \pi_{n+i+1}(X \wedge \suspsym K(A,i)) \lra \pi_{n+i+1}(X \wedge K(A,i+1)).
    \]
    Now, the first of these two maps is induced by the Freundenthal map
    $X \wedge K(A,i) \to \loopspacesym \suspsym (X \wedge K(A,i))$ composed with the equivalence
    $\suspsym (X \wedge K(A,i)) \simeq (X \wedge \suspsym K(A,i))$.
    Notice that, by \cref{cor:smashconnected}, $X \wedge K(A,i)$ is $(n+i-1)$-connected.
    If $i\geq 1$, we have that $n+i \geq 2$, and thus $(n+i-1)+1 \leq 2(n+i-1)$,
    so the Freudenthal suspension theorem (\cite[Theorem~8.6.4]{hottbook}) implies that
    the map $\pi_{n+i}(X \wedge K(A,i)) \to \pi_{n+i+1}(X \wedge \suspsym K(A,i))$
    is an equivalence.

    The second map is an equivalence by \cref{cor:smashtruncated}, since by definition $K(A,i+1) \equiv \ttrunc{i+1}{\suspsym K(A,i)}$.
\end{proof}

\begin{thm}[Hurewicz Theorem]\label{thm:hurewicz}
    Given an abelian group $A$, a natural number $n\geq 1$, and a pointed, $(n-1)$-connected type $X$,
    we have an isomorphism $\pi_n(X)^{ab} \otimes A \simeq_\Grp \tH_n(X; A)$, natural in $X$ and $A$.
\end{thm}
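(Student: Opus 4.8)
The plan is to chain together the two main results proved earlier, \cref{lem:stabilization} and \cref{thm:smashistensor}, taking $K(A,1)$ as the second smash factor. First I would invoke \cref{lem:stabilization}, which under the hypothesis that $X$ is $(n-1)$-connected provides a natural isomorphism $\tH_n(X; A) \cong \pi_{n+1}(X \wedge K(A,1))$. This reduces the theorem to identifying the group $\pi_{n+1}(X \wedge K(A,1))$ with the desired tensor product.

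Next I would apply \cref{thm:smashistensor} with $Y \defeq K(A,1)$ and $m \defeq 1$. Since $K(A,1)$ is $0$-connected, being $(m-1)$-connected for $m = 1$, the hypotheses hold, and the theorem exhibits $\pi_{n+1}(X \wedge K(A,1))$ as the tensor product of $\pi_n(X)^{ab}$ and $\pi_1(K(A,1))^{ab}$ via the map $t_{X, K(A,1)}$. It then remains to observe that $\pi_1(K(A,1))^{ab} \cong A$: the Eilenberg--Mac Lane space carries a canonical isomorphism $\pi_1(K(A,1)) \cong A$, and since $A$ is already abelian, its abelianization is itself. Composing
\[
  \pi_n(X)^{ab} \otimes A \;\cong\; \pi_{n+1}(X \wedge K(A,1)) \;\cong\; \tH_n(X; A)
\]
then yields the claimed isomorphism.

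The substance of the argument is not in these identifications, which are immediate, but in checking naturality in both $X$ and $A$. Naturality in $X$ comes for free, since both \cref{lem:stabilization} and the tensor-product map $t_{X,Y}$ of \cref{thm:smashistensor} are already natural in the relevant pointed type. The hard part will be naturality in $A$: one must verify that $A \mapsto K(A,1)$ is functorial, that the map it induces on $\pi_1$ agrees with the original homomorphism under the canonical identification $\pi_1(K(A,1)) \cong A$, and that the smash-product prespectrum $X \wedge HA$ defining $\tH_n(X;A)$ varies functorially in $A$ compatibly with these identifications. Assembling these compatibilities into a single natural isomorphism in $A$ is where the real bookkeeping lies, although each ingredient is routine once the functoriality of $K(-,1)$ and of $H(-)$ is established.
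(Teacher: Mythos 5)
Your proposal is correct and matches the paper's own proof, which likewise reduces via \cref{lem:stabilization} to exhibiting $\pi_{n+1}(X \wedge K(A,1))$ as $\pi_n(X)^{ab} \otimes A$ and then invokes \cref{thm:smashistensor} with $Y \defeq K(A,1)$, $m \defeq 1$, using $\pi_1(K(A,1))^{ab} \cong A$. Your closing remarks on naturality in $A$ (functoriality of $K(-,1)$ and of the prespectrum $X \wedge HA$) identify genuine bookkeeping that the paper's two-line proof leaves implicit, but they do not change the argument.
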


By naturality in $X$, we mean naturality with respect to pointed maps between $(n-1)$-connected types.

\begin{proof}
    By \cref{lem:stabilization}, it is enough to show that we have a natural isomorphism
    $\pi_{n+1}(X \wedge K(A,1)) \simeq_\Grp \pi_n(X)^{ab} \otimes A$,
    and this follows directly from \cref{thm:smashistensor}.
\end{proof}

\subsection{The Hurewicz homomorphism}\label{ss:hurewicz-hom}

In this section we give a construction of the Hurewicz homomorphism
and prove that it is unique up to sign.

Let $X$ be a pointed type, $A$ an abelian group, and $n \geq 1$.
Applying $\tH_n(-;A)$ to the $(n-1)$-connected cover map $X\cov{n-1} \to_\sbt X$
we obtain a morphism $\tH_n(X\cov{n-1} ; A) \to_\Grp \tH_n(X ; A)$, natural
in $X$ and $A$.
By \cref{thm:hurewicz}, there is a natural isomorphism
\[
\pi_n(X\cov{n-1})^{ab} \otimes A \simeq_\Grp \tH_n(X\cov{n-1} ; A).
\]
Since $\pi_n(X\cov{n-1}) \to_\Grp \pi_n(X)$ is also a natural isomorphism, we can compose with
the abelianization of its inverse to obtain a morphism $\pi_n(X)^{ab} \otimes A \to_\Grp \tH_n(X ; A)$.

\begin{defn}\label{def:Hurewicz-homomorphism}
    For every $X : \UU_\sbt$, $A : \Ab$, and $n \geq 1$,
    the morphism $h_n : \pi_n(X)^{ab} \otimes A \to \tH_n(X ; A)$ described above is the \define{$n$th Hurewicz homomorphism}.
\end{defn}

By construction, when $X$ is $(n-1)$-connected, $h_n$ is an isomorphism.

\begin{defn}
    Let $n \geq 1$. A morphism of \define{$n$-Hurewicz type} is given by
    a group homomorphism $\pi_n(X)^{ab} \otimes A \to_{\Grp} \tH_n(X;A)$
    for each $X : \UU_\sbt$ and $A : \Ab$,
    that is natural in both $A$ and $X$,
    and that is an isomorphism when $X \equiv S^n$ and $A \equiv \Z$.
    Here we are regarding $\pi_n(-)^{ab} \otimes -$ and $\tH_n(-;-)$
    as $1$-coherent functors $\UUcat_\sbt \times \Ab \to \Ab$.
\end{defn}

%

\begin{eg}
    For any $n \geq 1$, the $n$th Hurewicz homomorphism (\cref{def:Hurewicz-homomorphism}) is a morphism
    of $n$-Hurewicz type.
\end{eg}

\begin{thm}\label{thm:Hurewicz-unique}
    Let $n : \N$ and let
    $F$ and $G$ be morphisms of $n$-Hurewicz type.
    Then either $F(X,A) = G(X,A)$ or
    $F(X,A) = -G(X,A)$ for every pointed type $X$ and
    abelian group $A$.
    The choice of sign is independent of $X$ and $A$.
\end{thm}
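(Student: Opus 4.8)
The plan is to reduce the comparison of $F$ and $G$ to the single pair $(S^n,\Z)$, pin down one global sign $\epsilon$ there, and then transport the identity $F=\epsilon G$ to all $(X,A)$ by naturality. (Throughout, $n\geq 1$, as required for the notion of $n$-Hurewicz type.) First I would fix notation: let $\iota_n : \pi_n(S^n)$ be the homotopy class of $\idfunc{S^n}$, let $\iota_n^{ab}$ be its image under abelianization, and set $u := \iota_n^{ab}\otimes 1 : \pi_n(S^n)^{ab}\otimes\Z$. Using $\pi_n(S^n)\cong\Z$ together with \cref{tensoringwithZ}, the domain $\pi_n(S^n)^{ab}\otimes\Z$ is isomorphic to $\Z$ (and is generated by $u$). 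Since $F$ and $G$ are of $n$-Hurewicz type, $F(S^n,\Z)$ and $G(S^n,\Z)$ are both isomorphisms out of a group $\cong\Z$, so $G(S^n,\Z)^{-1}\circ F(S^n,\Z)$ is an automorphism of $\Z$. As $\Aut(\Z)=\{+1,-1\}$, this forces $F(S^n,\Z)=\epsilon\,G(S^n,\Z)$ for a single $\epsilon\in\{+1,-1\}$; in particular $F(S^n,\Z)(u)=\epsilon\,G(S^n,\Z)(u)$.

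Next I would propagate this identity to an arbitrary elementary tensor. Given $X:\UU_\sbt$, $A:\Ab$, $\xi:\pi_n(X)^{ab}$ and $a:A$, I choose a pointed map $\alpha : S^n\pto X$ and a homomorphism $\phi_a:\Z\Gto A$ realizing $\xi$ and $a$: since every class in $\pi_n(X)$ is $\pi_n(\alpha)(\iota_n)$ for its representing map and the abelianization map is surjective, naturality of $\eta$ gives $\xi = \pi_n(\alpha)^{ab}(\iota_n^{ab})$ for a suitable $\alpha$, while $\phi_a$ is the map sending $1\mapsto a$. The pair $(\alpha,\phi_a)$ is a morphism $(S^n,\Z)\to(X,A)$ in $\UUcat_\sbt\times\Ab$, and the domain functor sends $u$ to $\pi_n(\alpha)^{ab}(\iota_n^{ab})\otimes\phi_a(1)=\xi\otimes a$. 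Invoking the naturality squares of the $1$-coherent transformations $F$ and $G$ at this single morphism, and using that the induced map $\tH_n(\alpha;\phi_a)$ is a group homomorphism (so it commutes with multiplication by $\epsilon=\pm1$), I obtain
\[
  F(X,A)(\xi\otimes a) = \tH_n(\alpha;\phi_a)\bigl(F(S^n,\Z)(u)\bigr) = \epsilon\,\tH_n(\alpha;\phi_a)\bigl(G(S^n,\Z)(u)\bigr) = \epsilon\,G(X,A)(\xi\otimes a).
\]

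Finally, $F(X,A)$ and $\epsilon\,G(X,A)$ are both group homomorphisms out of $\pi_n(X)^{ab}\otimes A$ agreeing on every elementary tensor, so by \cref{equalonelementarytensors} they are equal; hence $F(X,A)=\epsilon\,G(X,A)$ for all $X$ and $A$, with $\epsilon$ fixed once and for all. If $\epsilon=+1$ this is the first alternative and if $\epsilon=-1$ the second, and the choice is manifestly independent of $X$ and $A$. The main obstacle is the first step: one must know $\pi_n(S^n)\cong\Z$ in order to identify the domain at $(S^n,\Z)$ with $\Z$ and thereby pin down the sign via $\Aut(\Z)=\{+1,-1\}$, and one must correctly invoke the two-variable $1$-coherent naturality of $F$ and $G$ at the morphism $(\alpha,\phi_a)$; the remaining steps are formal manipulations with elementary tensors.
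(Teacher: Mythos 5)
Your proposal is correct and takes essentially the same route as the paper's own proof: it pins down the sign at $(S^n,\Z)$ using $\pi_n(S^n)\simeq\Z$ and $\Z\otimes\Z\simeq\Z$ (\cref{tensoringwithZ}), transports the identity to arbitrary $(X,A)$ via the naturality square at the morphism $(\overline{\alpha},\overline{\beta}) : (S^n,\Z)\to(X,A)$, and concludes on elementary tensors via \cref{equalonelementarytensors}. The only differences are cosmetic --- you carry a single sign $\epsilon$ where the paper splits into two analogous cases --- though, as the paper does explicitly, you should note that choosing the representative $\overline{\alpha}$ and invoking surjectivity of $\eta$ is licensed because the goal, an equality of elements of a set, is a mere proposition.
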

\begin{proof}
    The morphisms $F(S^n,\Z)$ and $G(S^n,\Z)$ give us two
    isomorphisms between $\pi_n(S^n)\otimes \Z$ and $\tH_n(S^n;\Z)$.
    We now show that there are exactly two possible isomorphisms between $\pi_n(S^n)\otimes \Z$ and $\tH_n(S^n;\Z)$,
    and that these differ by a sign.
    On the one hand, by~\cite{LicataShulman} (see also~\cite[Section~8.1]{hottbook}), we know $\pi_n(S^n) \simeq \Z$.
    On the other hand, we have $\Z \otimes \Z \simeq \Z$ (\cref{tensoringwithZ}).
    So it is enough to show that there are exactly two isomorphisms between $\Z$ and $\Z$,
    and that they differ by a sign.
    This is straightforward, using the fact that if two integers $n$ and $m$ satisfy
    $n \times m = 1$, then $n = m = 1$ or $n = m = -1$, which follows from the fact
    that $\Z$ has decidable equality.

    There are then two cases, $F(S^n,\Z) = G(S^n,\Z)$ and $F(S^n,\Z) = -G(S^n,\Z)$.
    We consider only the first case, the second one being analogous.
    We thus assume that $F(S^n,\Z) = G(S^n,\Z)$ and we want to show that
    for every pointed type $X$ and every abelian group $A$ we have
    $F(X,A) = G(X,A)$.

    By \cref{equalonelementarytensors}, it is enough to check that $F(X,A) = G(X,A)$
    when evaluated on elementary tensors.
    Since the abelianization map is surjective and we are proving a proposition,
    it is enough to check this on elementary tensors $(\eta \alpha) \otimes \beta$
    for $\alpha : \pi_n(X)$ and $\beta : A$.
    Since we are proving a mere proposition, we can assume that we have
    a pointed map $\overline{\alpha} : S^n \to_{\sbt} X$ representing $\alpha$.
    Define $\overline{\beta} : \Z \to A$ by sending $1$ to $\beta$.
    Consider the following diagram, which commutes by the naturality assumption:
    \[
        \begin{tikzpicture}
          \matrix (m) [matrix of math nodes,row sep=4em,column sep=5em,minimum width=2em,nodes={text height=1.75ex,text depth=0.25ex}]
            { \pi_n(S^n)^{ab} \otimes \Z & \tH_n(S^n; \Z) \\
            \pi_n(X)^{ab} \otimes A & \tH_n(X;A). \\};
          \path[-stealth]
            (m-1-1) edge node [above] {$F(S^n,\Z)$} (m-1-2)
                    edge node [left] {$\pi_n(\overline{\alpha})^{ab} \otimes \overline{\beta}$} (m-2-1)
            (m-2-1) edge node [above] {$F(X,A)$} (m-2-2)
            (m-1-2) edge node [right]{$\tH_n(\overline{\alpha},\overline{\beta})$} (m-2-2)
            ;
        \end{tikzpicture}
    \]
    The commutativity of the diagram implies that
\[
F(X,A)((\eta \alpha) \otimes \beta) = \tH_n(\overline{\alpha},\overline{\beta})(F(S^n,\Z)(\theta \otimes 1)) ,
\]
    where $\theta : \pi_n(S^n)$ is represented by the identity map $S^n \pto S^n$.
    Similarly, we get that $G(X,A)((\eta \alpha) \otimes \beta) = \tH_n(\overline{\alpha},\overline{\beta})(G(S^n,\Z)(\theta \otimes 1))$,
    and since $F(S^n,\Z) = G(S^n,\Z)$, we conclude that $F(X,A)(\alpha \otimes \beta) = G(X,A)(\alpha \otimes \beta)$.
\end{proof}

\subsection{Applications}\label{ss:applications}

In this section, we give some consequences of the main results in the paper.
We start with two immediate applications of the Hurewicz theorem.

\begin{prop}
    Let $n \geq 1$, let $X$ be a pointed, $n$-connected type, and let $A : \Ab$.
    Then $\tH_i(X;A) = 0$ for all $i \leq n$.
    Conversely, if $X$ is a pointed, connected type with abelian fundamental group
    such that $\tH_i(X;\Z) = 0$ for all $i \leq n$, then $X$ is $n$-connected.\qed
\end{prop}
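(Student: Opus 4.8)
The plan is to deduce both directions from the Hurewicz theorem (\cref{thm:hurewicz}) together with the characterization of connectivity via vanishing of homotopy groups.

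For the first (direct) statement, I would argue by induction, or rather by working degree-by-degree. Suppose $X$ is $n$-connected. I want $\tH_i(X;A) = 0$ for all $i \leq n$. The key point is that an $n$-connected type is in particular $(i-1)$-connected for every $i \leq n$, so \cref{thm:hurewicz} applies at each such degree and gives a natural isomorphism $\pi_i(X)^{ab} \otimes A \cong \tH_i(X;A)$. But since $X$ is $n$-connected, $\pi_i(X)$ is trivial for all $i \leq n$ (this is the characterization of connectivity noted just after the definition of $n$-connected). Hence $\pi_i(X)^{ab}$ is the trivial group, and the tensor product $\pi_i(X)^{ab} \otimes A$ is trivial as well, so $\tH_i(X;A) = 0$. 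I should also dispatch the degenerate cases: for $i = 0$ reduced homology vanishes because $X$ is connected, and the Hurewicz theorem as stated requires $i \geq 1$, so the $i = 0$ case needs a separate (easy) remark rather than an appeal to \cref{thm:hurewicz}.

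For the converse, assume $X$ is connected with abelian fundamental group and $\tH_i(X;\Z) = 0$ for all $i \leq n$. I want to show $X$ is $n$-connected, i.e. $\pi_i(X)$ is trivial for all $i \leq n$. I would proceed by induction on $i$, showing that $X$ is $i$-connected for successively larger $i$. The base case $i = 1$: $X$ is already connected by hypothesis, so it is $0$-connected, hence $1$-connected-minus-one, and \cref{thm:hurewicz} gives $\pi_1(X)^{ab} \otimes \Z \cong \tH_1(X;\Z) = 0$; since $\pi_1(X)$ is abelian by assumption, $\pi_1(X)^{ab} \cong \pi_1(X)$, and tensoring with $\Z$ is the identity (\cref{tensoringwithZ}), so $\pi_1(X)$ is trivial, whence $X$ is $1$-connected. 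For the inductive step, having shown $X$ is $(i-1)$-connected, I apply \cref{thm:hurewicz} in degree $i$ to get $\pi_i(X)^{ab} \otimes \Z \cong \tH_i(X;\Z) = 0$; using \cref{tensoringwithZ} again this yields $\pi_i(X)^{ab} = 0$. For $i \geq 2$ the group $\pi_i(X)$ is already abelian, so this forces $\pi_i(X) = 0$ and hence $X$ is $i$-connected; iterating up to $i = n$ completes the argument.

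I expect the main obstacle to be bookkeeping around the low-degree cases and the abelianization. The direct statement needs the $i=0$ case handled separately since \cref{thm:hurewicz} is stated only for $i \geq 1$. In the converse, the subtle point is that the Hurewicz isomorphism involves the \emph{abelianized} homotopy group, so vanishing of $\pi_i(X)^{ab} \otimes \Z$ only directly kills $\pi_i(X)^{ab}$; I need $\pi_i(X)$ itself to be abelian to conclude $\pi_i(X) = 0$. For $i \geq 2$ this is automatic, but for $i = 1$ it is precisely why the abelian-fundamental-group hypothesis is needed, and the proof should make clear that this hypothesis is used only in degree $1$.
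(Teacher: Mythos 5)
Your proof is correct and is exactly the argument the paper intends: the proposition is stated with its proof omitted as an ``immediate application'' of \cref{thm:hurewicz}, and your degree-by-degree use of the Hurewicz isomorphism, \cref{tensoringwithZ}, the triviality of $\pi_i(X)$ for $i \leq n$ when $X$ is $n$-connected, and the inductive bootstrapping of connectivity in the converse is precisely how that gap is meant to be filled. Your low-degree bookkeeping is also on point --- handling $i = 0$ separately (the same connectivity argument, via \cref{cor:smashconnected}, disposes of negative $i$ if those are meant to be included) and using the abelian-$\pi_1$ hypothesis only in degree $1$.
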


\begin{prop}
    Let $n \geq 1$ and let $A,B : \Ab$. Then $\tH_n(K(A,n);B) \simeq A\otimes B$,
    and, in particular, $\tH_n(K(A,n);\Z) \simeq A$.\qed
\end{prop}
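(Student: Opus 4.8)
The plan is to apply the Hurewicz theorem directly, taking $X \equiv K(A,n)$. First I would note that, by construction, $K(A,n)$ is a pointed, $(n-1)$-connected type, so the hypotheses of \cref{thm:hurewicz} are satisfied for every $n \geq 1$ and every abelian group $B$. The Hurewicz theorem then yields a natural isomorphism
\[
  \pi_n(K(A,n))^{ab} \otimes B \simeq \tH_n(K(A,n); B).
\]

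Next I would rewrite the left-hand side using the defining property of the Eilenberg--Mac Lane space, namely the canonical isomorphism $\pi_n(K(A,n)) \cong A$. Since $A$ is already abelian, its abelianization is itself, so $\pi_n(K(A,n))^{ab} \cong A$; this disposes of the $n = 1$ case, where the abelianization could in principle matter, with no extra work. Composing these isomorphisms gives $\tH_n(K(A,n); B) \simeq A \otimes B$, which is the first assertion. For the final claim I would specialize to $B \equiv \Z$ and invoke \cref{tensoringwithZ}, which supplies an isomorphism $A \otimes \Z \simeq A$; chaining this with the isomorphism just obtained yields $\tH_n(K(A,n); \Z) \simeq A$.

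There is no genuine obstacle here: the proposition is an immediate corollary of the Hurewicz theorem together with the two standing facts, namely the identification of $\pi_n(K(A,n))$ with $A$ and the fact that tensoring with $\Z$ is the identity. The only point requiring a moment's care is the abelianization appearing on the left-hand side when $n = 1$, and this is resolved automatically because $\pi_n(K(A,n))$ is identified with the abelian group $A$, so no commutators need to be quotiented out.
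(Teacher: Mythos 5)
Your proposal is correct and matches the paper's intended argument exactly: the paper states this proposition without proof as an ``immediate application'' of the Hurewicz theorem, and the intended reasoning is precisely yours---apply \cref{thm:hurewicz} to the pointed, $(n-1)$-connected type $K(A,n)$, identify $\pi_n(K(A,n))^{ab}$ with $A$ via the canonical isomorphism (abelianization being trivial since $A$ is already abelian), and invoke \cref{tensoringwithZ} for the case $B \equiv \Z$. Your explicit remark about why the abelianization is harmless when $n = 1$ is a welcome clarification of a point the paper leaves implicit.
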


The following result says that truncation does not affect low-dimensional homology.

\begin{prop}\label{prop:homologyoftruncation}
    Let $X$ be a pointed type and let $m \geq n$ be natural numbers.
    For every abelian group $A : \Ab$, the truncation map $X \to \ttrunc{m}{X}$ induces
    an isomorphism $\tH_n(X;A) \xrightarrow{\sim} \tH_n\left(\ttrunc{m}{X};A\right)$.
\end{prop}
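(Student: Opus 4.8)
The plan is to reduce everything to a levelwise statement about the sequential diagrams defining the two homology groups, and then invoke \cref{lem:smashtruncated} together with the fact that higher truncations do not change low homotopy groups. Recall from \cref{defn:homology,defn:stablehomotopygroups} that, writing $j = \max(0, 2-n)$, we have $\tH_n(X;A) \equiv \colim \stab^{n+j}_j(X \wedge HA)$, and that the $i$-th term of the diagram $\stab^{n+j}_j(X \wedge HA)$ is $\pi_{n+j+i}(X \wedge K(A, j+i))$, with the analogous description for $\ttrunc{m}{X}$. The first thing I would observe is that the pointed truncation map $X \pto \ttrunc{m}{X}$, smashed with the prespectrum $HA$, is a map of prespectra, so by the naturality of $\stab^{n+j}_j(-)$ and of $\colim$ it induces a map of sequential diagrams whose colimit is exactly the homomorphism in the statement. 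Hence it suffices to show that each component map, induced by $|-|_m \wedge \idfunc{K(A,k)}$ with $k := j+i$,
\[
  \pi_{n+k}\bigl(X \wedge K(A,k)\bigr) \lra \pi_{n+k}\bigl(\ttrunc{m}{X} \wedge K(A,k)\bigr),
\]
is an isomorphism; a levelwise isomorphism of diagrams yields an isomorphism on colimits by the universal property in \cref{lem:seqcolimofgroups}.

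To prove each component is an isomorphism, I would fix $k \geq 0$ and note that $K(A,k)$ is $(k-1)$-connected. The key numerical input is the hypothesis $m \geq n$, which gives $n + k \leq m + k = k + m$; therefore, for any type $W$, the truncation map $W \pto \ttrunc{k+m}{W}$ induces an isomorphism on $\pi_{n+k}$. Applying this to $W \equiv X \wedge K(A,k)$ and to $W \equiv \ttrunc{m}{X} \wedge K(A,k)$ gives the vertical isomorphisms in the naturality square
\[
\begin{tikzcd}[column sep=3em]
  \pi_{n+k}\bigl(X \wedge K(A,k)\bigr) \ar[r] \ar[d,"\sim"'] &
  \pi_{n+k}\bigl(\ttrunc{m}{X} \wedge K(A,k)\bigr) \ar[d,"\sim"] \\
  \pi_{n+k}\bigl(\ttrunc{k+m}{X \wedge K(A,k)}\bigr) \ar[r] &
  \pi_{n+k}\bigl(\ttrunc{k+m}{\ttrunc{m}{X} \wedge K(A,k)}\bigr).
\end{tikzcd}
\]
The bottom map is an isomorphism by \cref{lem:smashtruncated}, applied with the $(k-1)$-connected type $K(A,k)$ playing the role of the connected factor and truncation level $m$; hence the top map is an isomorphism, as needed.

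I do not expect a single hard step here, since once the indices line up each piece is either \cref{lem:smashtruncated} or the standard behaviour of truncation on homotopy groups. The main thing to take care of is precisely the index bookkeeping: that $n \leq m$ is exactly what guarantees $n + k \leq k + m$ at every level $k = j + i$, so that \cref{lem:smashtruncated} at truncation level $m$ always applies at the relevant homotopy group. A secondary point worth stating explicitly is that the levelwise maps genuinely assemble into a map of sequential diagrams, i.e.\ that they commute with the structure maps; this follows from the functoriality of the smash product of a pointed map with a prespectrum and the naturality of $\stab^{n+j}_j(-)$, so that the induced colimit map coincides with the one obtained from the functoriality of $\tH_n(-;A)$.
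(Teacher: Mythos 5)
Your proposal is correct and follows essentially the same route as the paper's proof: reduce to the levelwise maps of the defining sequential diagram, form the same naturality square with $(k+m)$-truncations, note the vertical maps are isomorphisms because $n+k \leq k+m$ (i.e.\ $m \geq n$), and apply \cref{lem:smashtruncated} with $K(A,k)$ as the $(k-1)$-connected factor for the bottom map. Your additional remarks — checking that the levelwise maps assemble into a map of sequential diagrams via the functoriality of $X \wedge HA$ in $X$, and that a levelwise isomorphism induces an isomorphism on colimits — are points the paper leaves implicit, and are handled exactly as you indicate.
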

\begin{proof}
    The objects in the sequential diagram that defines
    $\tH_n(X;A)$ have the form $\pi_{n+i}(X \wedge K(A,i))$
    for $i \geq \min(0,2-n)$,
    and the morphism $\tH_n(X;A) \to \tH_n(\ttrunc{m}{X};A)$ is induced by
    levelwise morphisms $\pi_{n+i}(X \wedge K(A,i)) \to \pi_{n+i}(\ttrunc{m}{X} \wedge K(A,i))$
    given by the functoriality of $\pi_{n+i}$ and the smash product.
    We will show that these levelwise morphisms are isomorphisms,
    which implies that the induced map is an isomorphism.

    Consider the commutative square
   \[
        \begin{tikzpicture}
          \matrix (m) [matrix of math nodes,row sep=2em,column sep=3em,minimum width=2em,nodes={text height=1.75ex,text depth=0.25ex}]
          { \pi_{n+i}(X \wedge K(A,i))  & \pi_{n+i}\left(\ttrunc{m}{X} \wedge K(A,i)\right) \\
            \pi_{n+i}\left(\ttrunc{i+m}{X \wedge K(A,i)}\right)  & \pi_{n+i}\left(\ttrunc{i+m}{\ttrunc{m}{X} \wedge K(A,i)}\right), \\};
          \path[-stealth]
            (m-1-1) edge node [above] {} (m-1-2)
                    edge node [left] {} (m-2-1)
            (m-2-1) edge node [above] {} (m-2-2)
            (m-1-2) edge node [right]{} (m-2-2)
            ;
        \end{tikzpicture}
    \]
    given by functoriality of $(i+m)$-truncation and $\pi_{n+i}$.
    It suffices to show that the bottom map and the vertical maps in the square
    are isomorphisms.
    The vertical maps are isomorphisms since $n+i \leq i + m$, and the bottom
    map is an isomorphism by \cref{lem:smashtruncated}.
\end{proof}

We conclude by showing that $\infty$-connected maps induce an isomorphism in all homology groups.

\begin{cor}
    Let $f : X \to_\sbt Y$ be a pointed map between pointed types that induces an
    isomorphism in $\pi_0$ and an isomorphism in $\pi_n$ for $n \geq 1$ and all
    choices of basepoint $x_0 : X$. Then $f$ induces an isomorphism in all homology groups
    for all choices of coefficients.
\end{cor}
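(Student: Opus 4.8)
The plan is to reduce the statement to the behaviour of $f$ after truncation, where \cref{prop:homologyoftruncation} already tells us that low-dimensional homology is left unchanged. The key preliminary observation is that, for each $n$, the induced map $\ttrunc{n}{f} : \ttrunc{n}{X} \to \ttrunc{n}{Y}$ is an equivalence. Indeed, it is a map between $n$-truncated types, and since $\pi_k(\ttrunc{n}{Z}) \cong \pi_k(Z)$ for every $k \leq n$ and every pointed type $Z$, the hypothesis that $f$ induces isomorphisms on $\pi_0$ and on all $\pi_k$ at every basepoint shows that $\ttrunc{n}{f}$ induces isomorphisms on $\pi_k$ for all $k \leq n$ at every basepoint; Whitehead's theorem for $n$-truncated types (\cite{hottbook}) then makes $\ttrunc{n}{f}$ an equivalence.

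Fix an abelian group $A$. For a degree $n \geq 0$ I would consider the square
\[
\begin{tikzcd}[column sep=3.4em]
\tH_n(X;A) \ar[r,"\tH_n(f;A)"] \ar[d,"\sim"'] & \tH_n(Y;A) \ar[d,"\sim"] \\
\tH_n\!\left(\ttrunc{n}{X};A\right) \ar[r,"\tH_n(\ttrunc{n}{f};A)"'] & \tH_n\!\left(\ttrunc{n}{Y};A\right),
\end{tikzcd}
\]
which commutes by the naturality of the truncation maps $|-|_n$ together with the functoriality of $\tH_n(\blank;A)$. Its vertical maps are the isomorphisms provided by \cref{prop:homologyoftruncation} (taking $m = n$), and its bottom map is an isomorphism because $\ttrunc{n}{f}$ is an equivalence. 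It follows that the top map $\tH_n(f;A)$ is an isomorphism.

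For negative degrees $n = -k$ with $k \geq 1$ I would instead show that the groups themselves vanish, so that $\tH_n(f;A)$ is automatically an isomorphism. Unfolding \cref{defn:homology,defn:stablehomotopygroups}, $\tH_{-k}(X;A)$ is the sequential colimit of the groups $\pi_{2+i}(X \wedge K(A,2+k+i))$. Since every pointed type is $(-1)$-connected and $K(A,2+k+i)$ is $(1+k+i)$-connected, \cref{cor:smashconnected} shows that $X \wedge K(A,2+k+i)$ is $(1+k+i)$-connected; as $2+i \leq 1+k+i$, the group $\pi_{2+i}$ is trivial, so the colimit vanishes, and the same holds for $Y$.

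Almost every step here is bookkeeping with naturality and the cited results; the only input with real content is the equivalence $\ttrunc{n}{f}$ coming from Whitehead's theorem, and the modest observation dispatching the negative degrees. (One could avoid splitting on the sign of $n$ altogether by proving directly that $f \wedge \idfunc{Z}$ is $\infty$-connected for every pointed type $Z$ --- via the smash--hom adjunction of \cref{lem:smashhomadjunction} and the fact that the pointed mapping space into a truncated type stays truncated --- whence $f \wedge \idfunc{K(A,i)}$ induces isomorphisms on every homotopy group and thus on each term of the colimit defining $\tH_n(\blank;A)$.)
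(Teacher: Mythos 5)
Your proof is correct and, for $n \geq 0$, takes essentially the same route as the paper: the same commutative square comparing $f$ with $\ttrunc{n}{f}$, with vertical isomorphisms supplied by \cref{prop:homologyoftruncation} (taking $m = n$) and the truncated Whitehead theorem making the bottom map an equivalence. Your separate treatment of negative degrees, where you unfold \cref{defn:stablehomotopygroups,defn:homology} and show via \cref{cor:smashconnected} that the groups $\pi_{2+i}(X \wedge K(A,2+k+i))$ vanish so the colimit is trivial, goes beyond the paper's proof, which begins ``let $n \geq 0$'' and silently omits this case even though homology is defined for all integers; this addition is genuinely needed for the statement as written, since \cref{prop:homologyoftruncation} is only stated for natural numbers and cannot be invoked for negative $n$.
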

\begin{proof}
    Let $A : \Ab$ and let $n \geq 0$.
    We have a commutative square
   \[
        \begin{tikzpicture}
          \matrix (m) [matrix of math nodes,row sep=2em,column sep=3em,minimum width=2em,nodes={text height=1.75ex,text depth=0.25ex}]
            { \tH_n(X; A) & \tH_n(Y ; A)\\
              \tH_n\left(\ttrunc{n}{X}; A\right) & \tH_n\left(\ttrunc{n}{Y}; A\right) ,\\};
          \path[-stealth]
            (m-1-1) edge node [above] {} (m-1-2)
                    edge node [left] {} (m-2-1)
            (m-2-1) edge node [above] {} (m-2-2)
            (m-1-2) edge node [right]{} (m-2-2)
            ;
        \end{tikzpicture}
    \]
    where the vertical maps are isomorphisms, by \cref{prop:homologyoftruncation}.
    The bottom map is induced by $\ttrunc{n}{f} : \ttrunc{n}{X} \to \ttrunc{n}{Y}$,
    which is an equivalence, by the truncated Whitehead theorem (\cite[Theorem~8.8.3]{hottbook}).
    It follows that the top map is an isomorphism, concluding the proof.
\end{proof}

%

\printbibliography

\end{document}